\newtheorem{theorem}{Theorem}
\newtheorem{lemma}[theorem]{Lemma}
\newtheorem{prop}[theorem]{Proposition}
\newtheorem{corollary}[theorem]{Corollary}
\newtheorem{assumption}[theorem]{Assumption}
\newtheorem{remark}[theorem]{Remark}
\newtheorem*{claim*}{Claim}
\newtheorem{claim}[theorem]{Claim}
\newtheorem{defn}[theorem]{Definition}
\numberwithin{equation}{section}
\numberwithin{theorem}{section}
\let\plainqed\qedsymbol
\newcommand{\claimqed}{$\lrcorner$}
\newenvironment{claimproof}{\begin{proof}\renewcommand{\qedsymbol}{\claimqed}}{\end{proof}\renewcommand{\qedsymbol}{\plainqed}}
\newcolumntype{L}[1]{>{\raggedright\let\newline\\\arraybackslash\hspace{0pt}}m{#1}}
\newcolumntype{C}[1]{>{\centering\let\newline\\\arraybackslash\hspace{0pt}}m{#1}}
\newcolumntype{R}[1]{>{\raggedleft\let\newline\\\arraybackslash\hspace{0pt}}m{#1}}
\providecommand{\customgenericname}{}
\newcommand{\newcustomtheorem}[2]{%
  \newenvironment{#1}[1]
  {%
   \renewcommand\customgenericname{#2}%
   \renewcommand\theinnercustomgeneric{##1}%
   \innercustomgeneric
  }
  {\endinnercustomgeneric}
}
\newcommand{\dto}{\ensuremath{\ \xrightarrow{d} \ }}  %conv in law
\newcommand{\R}{\mathbb{R}}                 % real numbers symbol
\newcommand{\N}{\mathbb{N}}                 % natural numbers symbol
\newcommand{\PP}{\mathbb{P}}  				%Probability
\newcommand{\sel}{\texttt{select}}
\DeclareMathOperator*{\argmin}{arg\,min}
\newcommand{\dom}{\texttt{dom}}
\newcommand\blfootnote[1]{%
  \begingroup
  \renewcommand\thefootnote{}\footnote{#1}%
  \addtocounter{footnote}{-1}%
  \endgroup
}  
\title{Optimal Rate-Matrix Pruning For Large-Scale Heterogeneous Systems}
\begin{document}
\author{
  Zhisheng Zhao$^1$, Debankur Mukherjee$^2$
}
\renewcommand\Authands{, }
\maketitle
\begin{abstract}
We present an analysis of large-scale load balancing systems, where the processing time distribution of tasks depends on both the task and server types. Our study focuses on the asymptotic regime, where the number of servers and task types tend to infinity in proportion.
In heterogeneous environments, commonly used load balancing policies such as Join Fastest Idle Queue and Join Fastest Shortest Queue exhibit poor performance and even shrink the stability region. Interestingly, prior to this work, finding a scalable policy with a provable performance guarantee in this setup remained an open question.

To address this gap, we propose and analyze two asymptotically delay-optimal dynamic load balancing policies. The first policy efficiently reserves the processing capacity of each server for ``good" tasks and routes tasks using the vanilla Join Idle Queue policy. The second policy, called the speed-priority policy, significantly increases the likelihood of assigning tasks to the respective ``good" servers capable of processing them at high speeds.
By leveraging a framework inspired by the graphon literature and employing the mean-field method and stochastic coupling arguments, we demonstrate that both policies achieve asymptotic zero queuing. Specifically, as the system scales, the probability of a typical task being assigned to an idle server approaches 1.

   \blfootnote{$^1$Georgia Institute of Technology, \emph{Email:} \href{mailto:zhisheng@gatech.edu}{zhisheng@gatech.edu}}
\blfootnote{$^2$Georgia Institute of Technology, \emph{Email:}  \href{mailto:debankur.mukherjee@isye.gatech.edu}{debankur.mukherjee@isye.gatech.edu}}
\blfootnote{\emph{Keywords and phrases}. heterogeneous load balancing, data locality, Join-the-Idle Queue, fluid method, stochastic coupling }
\blfootnote{\emph{Acknowledgements.} The work was partially supported by the NSF grant CIF-2113027.}
\end{abstract}

\section{Introduction}
\label{sec:intro}

%% For citations use: 
%%       \citet{<label>} ==> Jones et al. (2015)
%%       \citep{<label>} ==> (Jones et al., 2015)

\noindent
\textbf{Background and motivation.}
Advanced cloud computing platforms, such as AWS, Azure, and Google Cloud, handle millions of requests per second. Efficiently assigning tasks across servers using a load balancing algorithm is critical for the seamless functioning of these systems. 
This has inspired a huge body of foundational works on large-scale load balancing algorithms in the last decade~\cite{Mitzenmacher96,VDK96,Bramson11,LXKGLG11,EG18,GW19,WZS20,RM22}. See~\cite{BBLM21} for a comprehensive recent survey.
Despite this enormous progress, it is worthwhile to point out that the existing works have predominately focused on homogeneous models, where a task is processed at the same rate at all servers.
In contrast, real-world cloud computing platforms receive requests containing multiple classes of tasks with varying characteristics, such as accessing websites, training machine learning models, or backing up data. 
Additionally, with the expansion of these platforms, servers can be of different types (multi-skilled), as evident even from AWS's website, which lists at least 9 server types with varying memory and bandwidth. Moreover, due to the storage limit (a.k.a.~\textit{data locality}), a server can only have the required resource files to execute only a subset of tasks. 
In such heterogeneous environments, commonly used ultra-low-latency routing policies, such as Join Shortest Queue (JSQ), Join Idle Queue (JIQ), Join Fastest Shortest Queue (JFSQ), and the Join Fastest Idle Queue (JFIQ) are known to perform poorly (as shown in the numerical results in Section~\ref{sec:numerical results}). 
One reason is that these algorithms optimize the task assignment myopically which depends only on the current state.
This often results in the assignment of tasks to servers that cannot process it at a relatively high speed, leading to inefficient server utilization.
Thus, to better model large-scale cloud computing platforms, it is crucial to understand such heterogeneous parallel-server systems, where the time to process a task in a server depends on \textit{both the type of the task and that of the server}, and develop efficient load balancing algorithms for them. 
The above motives the current work.
As a side note, even though our primary motivation is data center networks, it is worthwhile to mention that similar heterogeneity exists in many other service systems as well. For example, in hospitals patients arriving at the emergency room may have different types of emergencies and multiple medical staff available, or in manufacturing systems where different types of machines and workers are present for different operations, such as assembly, packaging, and painting.\\

\noindent
\textbf{Key challenges.} 
There are two fundamental reasons why fully heterogeneous systems have resisted any foundational progress in large-scale performance analysis:
\textbf{First}, if all the server processing speeds are distinct, then servers become non-exchangeable.
This takes us beyond the range of classical mean-field techniques. In fact, before attempting any large-scale analysis, one needs to consistently define a sequence of systems with increasing number of servers. Even this is unclear when service rates are fully heterogeneous.
As we will see, in this paper, we mitigate this issue using an approach inspired from the graphon literature~\cite{ll12}.
\textbf{Second}, when different tasks require different processing times even at the same server,
(under first-come-first-served discipline) a Markovian state descriptor must include the order of the tasks in each queues. Consequently, the system lacks any aggregate state descriptor.
Also, as the system scales, the state space thus explodes exponentially. These lead to an intractable performance analysis.
%\textbf{Third}, unlike in the homogeneous case, there is no concise and consistent description of heterogeneous systems for large-scale analysis. The conventional model description of a heterogeneous system includes an arrival-rate vector and a service-rate matrix, whose dimension goes to infinity as the system scales.

In addition to the analytical hurdles, the process of designing scalable algorithms for such a heterogeneous system presents significant obstacles.
In the seminal work, Stolyar~\cite{AS05} discussed the two-fold heterogeneous systems and proposed the \textsc{MinDrift} policy, which can be understood as the Gc$\mu$-rule~\cite[Section~4]{MS04} in the (output-queued) load balancing setup. 
Here, it was shown that the \textsc{MinDrift} policy defined in~\ref{sec:MinDrift} asymptotically minimizes the server workload in the conventional heavy traffic regime when the number of servers is fixed and the arrival rate approaches the boundary of the suitably defined capacity region. 
However, large-scale performance analysis of this policy was not performed. 
Furthermore, implementing the \textsc{MinDrift} policy at scale requires the dispatcher to know the total expected workload and service rate of every server for the arriving task, which results in a prohibitive communication burden. 
%One of the assumptions in~\cite{AS05} is that the number of task classes and server types is fixed as the system size scales, whereas in reality, this number may increase as the system grows. Hence, with the rapid growth of cloud computing platforms, \textit{designing an efficient routing policy with strong scalability} is an interesting and challenging problem, which is the main focus of our work.
To the best of our knowledge, for such a heterogeneous setting, our study is the first to design a scalable policy with a provable performance guarantee in the many-server asymptotic regime.\\

\noindent
\textbf{Our contributions.}
The main contributions of this paper are two-fold:\\

\noindent
\textbf{(a)} We provide a framework to define a consistent sequence of heterogeneous systems with increasing size, which makes it amenable for large-scale analysis. 
The key challenge is to capture the heterogeneity in service rates for all pairs of task types and servers. 
We capture this `matrix' of service rates by introducing a notion inspired by graphon theory, which we call $f$-sequence (Definition~\ref{def:f-sequence}). 
From a high level, we map each task type and server into $[0,1)$ through mapping functions $\phi_1$ and $\phi_2$, respectively. 
Both this functions remain fixed for various values of $N$.
Then, the service rate matrix is described using a function $f:[0,1)^2\rightarrow\R_+$.
Specifically, if $\mu^N_{i,j}$ is the the service rate for processing task-type $i$ at server $j$ in $G^N$, then $\mu^N_{i,j}=f(\phi_1(i),\phi_2(j))$, for all $N$.
As we will see that this provides a generalizable framework for large-scale analysis.\\

\noindent
(b)
We design two low-complexity load balancing policies for the fully heterogeneous systems, both of which achieve the asymptotic zero-queueing property, a.k.a.~\emph{ultra-low latency} (where the probability of a task waiting in the queue approaches zero as the system size increases):
\begin{enumerate}[(I)]
    \item \textbf{Intelligent Capacity Reservation and Dispatching (ICRD) approach:} In this approach, we do a `preprocessing step' before the system goes live where the system reserves the capacity of each server for a subset of `good' tasks, which are tasks that can be processed efficiently in that server. 
    More specifically, we carefully prune the service rate matrix (by simply replacing some entries with 0).
    Then, dispatchers assign tasks under the vanilla JIQ policy. 
    \item \textbf{Speed-Priority Dispatching (SPD) approach:} In this approach, each dispatcher clusters its compatible servers into several groups based on their respective service capabilities of the corresponding task type. 
    For each arriving task, first a target group is chosen randomly, which gives more weight to the group that can process that task type at a higher service rate. 
    Next, the task is assigned to an idle queue in the target group. 
\end{enumerate}

Both the above approaches can be implemented in a token-based fashion, thus inheriting all the scalability properties similar to the JIQ policy.
In Theorem~\ref{thm:fluid-limit-2} and Theorem~\ref{thm:fluid-limit-1}, we demonstrate that both approaches achieve asymptotic zero-queueing, whereby the probability of assigning a new task to an idle server approaches 1 as the system scales.
In addition, fairly elementary tools are used to establish these results.

Due to many fundamental challenges mentioned earlier, the direct performance analysis of these systems with a general function $f$ is intractable. 
Therefore, we first investigate a special case where $f$ is stepwise, which serves as an intermediate step towards the general case. 
In the stepwise case, the ICRD approach reduces the system to a union of \textit{dispatcher-independent} systems where the service rates only depend on server-types.
Similarly, under the SPD approach, we show that the system behaves as a union of \textit{server-independent} systems, where the service rates only depend on the task-types.
Decoupling the service rates from either of the two dependencies makes it analytically tractable.
In the end, for the general $f$-sequence, under certain regular assumption, we can always find a stepwise $f'$-sequence whose performance can lower bound that of general one. 
Thus, through stochastic coupling arguments, in Theorem~\ref{thm:general-JIQ}and Theorem~\ref{thm:f-zero-queue}, we show that the zero-queueing property of ICRD and SPD still holds for the general case.\\

\noindent
\textbf{Related works.}
%Significant progress has been made in the analysis of load balancing systems over the past few decades. We refer readers to a recent overview by~\cite{BBLM21}. 
The vast literature on load balancing systems can be broadly classified into three categories based on their modeling characteristics: (i) homogeneous models, (ii) heterogeneous-server models, and (iii) two-fold heterogeneous models. 
Below we discuss a few representative works related to the current paper in each of the above three categories.
%See~\cite{BBLM21} for a more comprehensive survey.

\paragraph{Homogeneous models} The canonical homogeneous model assumes a system with one dispatcher and multiple parallel identical servers. Beyond its optimality, the behavior of the JSQ has been comprehensively analyzed in various heavy traffic regimes, including the convergence of the occupancy process~\citep{EG18,GW19,zhao21}, the analysis of steady states~\citep{BM19a,BM19b,Braverman18,HM20,HLM22,LY19,LY21}, and rates of convergence~\citep{Braverman22}. 
%\blue{Moreover, in the processor-sharing scenario, Gupta et al.~\cite{GBS07} JSQ is insensitive to the type of job-length distribution and nearly optimal in terms of minimizing the mean sojourn time.} 
However, the JSQ policy requires instantaneous information from all servers, leading to poor scalability. This has motivated the consideration of JSQ($d$) policies (a.k.a.~power-of-$d$-choices)~\cite{Mitzenmacher96,VDK96}.
In the presence of data locality, the JSQ($d$) policy is considered in~\cite{RM22,RM23}.
On a different thread of works, the JIQ policy is analyzed due to its asymptotic optimality properties and scalable, token-based implementation~\cite{AS15,LXKGLG11, MBLW16-1}.
%where the dispatcher randomly selects $d$ servers and assigns the new task to the server with the shortest queue among the selected ones.
%Through fluid model analysis, \cite{Mitzenmacher96,VDK96} demonstrated that the Po$d$ policy exponentially improves the queue length performance compared to random assignment. For modeling of the data locality, researchers take the graph structure into account. \cite{BMW17} considered the transient analysis of the JSQ($d$) policy on non-bipartite graphs. They provide sufficient condition on the graph sequence to asymptotically match the performance of a complete graph. The corresponding analysis on the bipartite graph and spatial graph is analyzed by \cite{RM22,RM23}. 

\paragraph{Heterogeneous-server models}
When servers are heterogeneous and all tasks are identical,~\cite{Stolyar17} establishes the zero-queueing property the JIQ policy and
%Recently, the performance of the system with service rate heterogeneity has been extensively explored. As mentioned, \cite{AS15, Stolyar17} studied the zero-queueing property of the Join-Idle-Queue (JIQ) policy. 
\cite{BM22,HLM21,MM16} considers the throughput optimality and performance improvement of the JSQ($d$) policy in various settings, like FCFS and processor-sharing. 
In the presence of data locality, in this setup, 
Weng et al.~\cite{WZS20} %is the first to consider the large-scale heterogeneous-server model under data locality. 
showed that the JFSQ/JFIQ policies achieve asymptotic optimality for minimizing mean steady-state waiting time when the bipartite graph is sufficiently well connected and Zhao et al.~\cite{ZhaoMW22} analyzed the JSQ($d$) policy. 
Furthermore, Allmeier and Gast~\cite{AG22} studied the application of (refined) mean-field approximations for heterogeneous systems, which uses an ODE to approximate the evolution of each server.
%, and showed the error vanishes as the system scales.
%\cite{ZMW22} imposed data-locality to significantly improve the performance of heterogeneous systems, including achieving throughput optimality and double-exponential decay of the tail probability of the steady-state queue length distribution. 
Gardner and Righter~\cite{GR20} investigated a system with arbitrary task-server compatibilities and demonstrated that the stationary distributions under a random assignment policy and several redundancy policies exhibit product-form structures. However, they also noted that obtaining bounds on mean response time solely from the product-form results was non-trivial.

\paragraph{Two-fold heterogeneous models}
The literature on service systems where service times depend on both task and server types is significantly scarce. 
%The research on the two-fold heterogeneous service 
The initial works on such system with the input-queued setup dates back to the work by Harrison~\cite{Harrison98}, who considered a system with two input streams and two servers and proved the asymptotic optimality of a constructed discrete-review control policy in the heavy traffic regime.
Subsequent works include~\cite{HL99, MS04}.
%After that, \cite{HL99} investigated the Brownian control problem on a more general system with finite fixed types of tasks and servers. Considering the same model of~\cite{HL99}, \cite{MS04} showed that a generalized c$\mu$-rule (Gc$\mu$-rule) minimizes both instantaneous and cumulative queueing costs asymptotically via heavy traffic analysis. 
For output-queued systems, Foss and Chernova~\cite{FC98} proved the stability condition for the model with two types of servers via fluid analysis. %, but their result cannot be extended to the general case with three or more types. 
For the general case, Stolyar~\cite{AS05} established the asymptotic optimality of the \textsc{MinDrift} policy and briefly mentioned that the stability of the system can be proved via the Lyapunov method. Dai and Tezcan~\cite{DT11} provided a general framework for establishing state space collapse results in the heavy traffic regime. 
The behavior in the heavy traffic regime investigated in~\cite{DT11,AS05} is typically qualitatively different from the subcritical regime considered in our work. \\

\noindent
\textbf{Notation.} 
Let $\N$ be the set of natural numbers and $\N_0=\N\cup \{0\}$. Let $\R_+$ be the set of nonnegative real numbers. 
For any $N\in\N$, define $[N]=\{1,2,\cdots,N\}$.
For a polish space $\mathcal{S}$, the space of right
continuous functions with left limits from $[0,\infty)$ to $\mathcal{S}$ is denoted as $\mathbb{D}([0,\infty),\mathcal{S})$, endowed with the Skorokhod J$_1$ topology. WLOG is the acronym of `without loss of generality'.

\section{Model Description}\label{ssec:model-contribution}
We will denote the heterogeneous system by $G^N=(\mathcal{W}^N,\mathcal{V}^N,\boldsymbol\lambda^N,\mathcal{U}^N)$. Here, $\mathcal{W}^N=\{1,...,W(N)\}$ represents the set of dispatchers, where each dispatcher only handles assignments of one type of tasks. Hence, the terms `task-type' and `dispatcher' will be used interchangeably. 
The set of servers is denoted by $\mathcal{V}^N=\{1,..., N\}$, where each server $j\in\mathcal{V}^N$ has a dedicated queue with infinite buffer capacity, and tasks are scheduled at each queue using the First-Come-First-Served (FCFS) policy. 
The arrival process of tasks at dispatcher $i\in\mathcal{W}^N$ is a Poisson process with rate $\lambda^N_i$, independently of other processes, and $\boldsymbol\lambda^N$ denotes the arrival rate vector $(\lambda^N_1,...,\lambda^N_{W(N)})$.
The matrix of service rates is represented by $\mathcal{U}^N=(\mu^N_{i,j},i\in\mathcal{W}^N,j\in\mathcal{V}^N)\in \R_+^{W(N)\times N}$, where the service time of a type $i$ task at server $j$ is exponentially distributed with mean $1/\mu^N_{i,j}$, if $\mu^N_{i,j}>0$. Otherwise (i.e., when $\mu^N_{i,j}=0$), by convention, server $j$ cannot process type $i$ tasks. A server $j\in\mathcal{V}^N$ is considered `\textit{compatible}' for type $i\in\mathcal{W}^N$ tasks if $\mu^N_{i,j}>0$. It is assumed that tasks arriving at a dispatcher must be instantaneously and irrevocably assigned to one of the compatible servers.
A schematic picture of the model is given in Figure~\ref{fig:general-hetero}, where edges corresponding to incompatible $(i,j)$ pairs are not shown.
\begin{figure}[h]
    \centering
\includegraphics[width=0.7\textwidth]{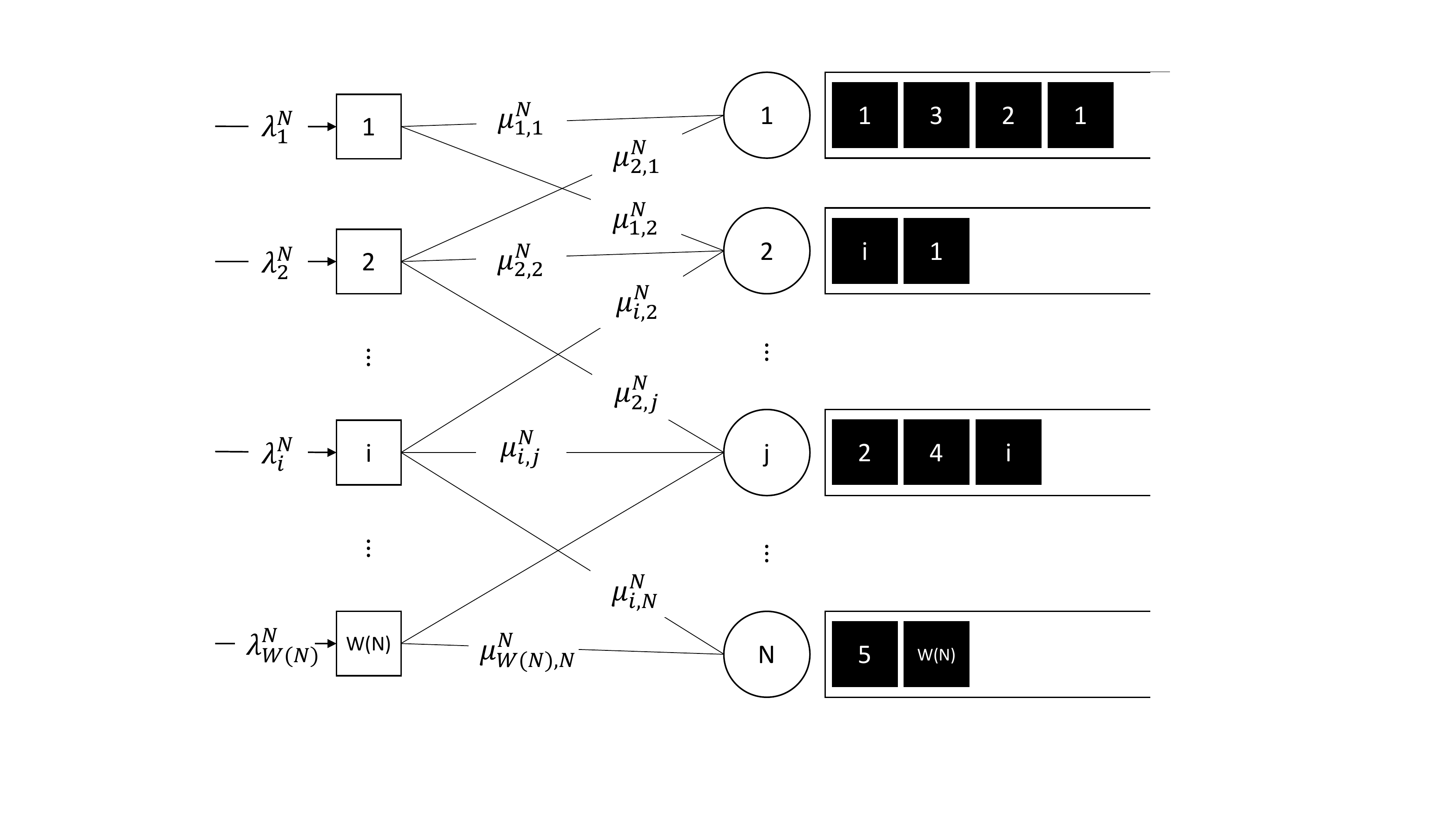}
    \caption{Heterogeneous Load Balancing System $G^N$}
    \label{fig:general-hetero}
\end{figure}

\subsection{Consistent model structure: $f$-sequence}
Consider a sequence $\{G^N=(\mathcal{W}^N,\mathcal{V}^N,\boldsymbol\lambda^N,\mathcal{U}^N)\}_{N\in\N}$ of systems. 
For consistency, we will define server and task type indices in the sequence in a nested way, that is, for all $N\in\N$, $\mathcal{W}^N\subseteq\mathcal{W}^{N+1}$, $\mathcal{V}^N\subseteq\mathcal{V}^{N+1}$, and $\mu^N_{i,j}=\mu^{N+1}_{i,j}$, $\forall (i,j)\in\mathcal{W}^N\times\mathcal{V}^N$. 
Inspired by the notion of graphon~\cite[Chapter~7]{ll12}, we define two membership mapping functions $\phi_i:\N\rightarrow [0,1)$, $i=1,2$, for dispatchers and servers, respectively. 
With these membership mapping functions, we can describe the service rate matrix of the sequence of systems by another function $f:[0,1)^2\rightarrow\R_+$ such that for $(i,j)\in\mathcal{W}^N\times \mathcal{V}^N$, $$\mu^N_{i,j} = f(\phi_1(i),\phi_2(j)).$$
This representation significantly simplifies the description of the service rate matrix since the dimension of $\mathcal{U}^N$ explodes as $N\rightarrow\infty$. 
Hence, below we formally introduce what we call an `$f$-sequence' and use it in the rest of the analysis.

\begin{defn}[$f$-sequence]\label{def:f-sequence}
Given a function $f:[0,1)^2\rightarrow\R_+$. A sequence $\{G^N\}_{N}$, where $G^N=(\mathcal{W}^N,\mathcal{V}^N,\mathcal{U}^N,\boldsymbol\lambda^N)$, is an $f$-sequence, if for each $N\in\N$, $u^N_{i,j}=f(\phi_1(i),\phi_2(j))$, $\forall (i,j)\in\mathcal{W}^N\times \mathcal{V}^N$.
\end{defn}
\noindent
The $f$-sequence is a general notion that encompasses the majority of classic queueing systems discussed in the existing literature.
Below we provide a few example scenarios. 

\begin{enumerate}[{[a]}]
    \item \textit{Homogeneous Systems.} A system consists of one class of tasks and one type of servers. A sequence of such systems can be modeled as an $f$-sequence with $f(x,y)=f(0,0)$, $\forall (x,y)\in[0,1)^2$ for some constant $f(0,0)>0$.

    \item \textit{Multiclass Many-Server Systems.} Service rates in such systems are independent of servers, which implies that all servers are statistically identical. A sequence of such systems can be modeled as an $f$-sequence with $f(x,y)=f(x,0)$, $\forall y\in[0,1)$. We will call such systems as \textit{server-independent system} below.

    \item \textit{Heterogeneous-Server Systems.} Service rates in such systems are independent of dispatchers, which implies that a server deals with tasks from various dispatchers at the same rate. A sequence of such systems can be modeled as an $f$-sequence with $f(x,y)=f(0,y)$, $\forall x\in[0,1)$. We will call such systems as \textit{dispatcher-independent system} below.

    \item \textit{Multiclass Multiserver Systems.} When considering fully heterogeneous case, previous works assume that the number of types of tasks and servers is finite, and service rates depend on both types of tasks and servers. A sequence of such systems can be modeled as an $f$-sequence with a stepwise function $f$ on $[0,1)^2$. We will investigate this special case thoroughly in the next section, which is the key tool for us to analyze the performance of a general $f$-sequence in the large-system asymptotic regime.
\end{enumerate}

\section{Main Results}
In this section, we present the main results involving the zero-queueing property of ICRD and SPD approaches. 
As an initial step, in Section~\ref{sec:stepwise-f}, we focus on the special case where $f$ is stepwise. We demonstrate that the ICRD and SPD approaches can effectively transform fully heterogeneous systems into dispatcher-independent and server-independent systems, respectively. This transformation will allow us to employ classical mean-field analysis techniques. 
Building upon these results, in Section~\ref{sec:general-case}, we extend the results to general $f$ using stochastic coupling arguments.

\subsection{Special Case: Stepwise $f$}\label{sec:stepwise-f}
Due to the intractability of directly analyzing the $f$-sequence with a arbitrary function $f$, we draw inspiration from using simple functions to approximate general ones. Thus, we first focus on analyzing the special case where $f$ is stepwise: 
consider an $f$-sequence $\{G^N\}_{N\in\N}$ with $f$ defined as follows:
Let $H$ and $M$ be any two positive integers. 
Also, let
$0=w_0<w_1<...<w_H=1$ and $0=v_0<v_1<...<v_M=1$ be any two partitions of $[0,1)$.
Then for each $h\in[H]$ and $m\in[M]$,
\begin{equation}\label{eq:f-step}
    f(x,y)=\mu_{h,m}\geq 0,\quad \forall (x,y)\in[w_{h-1},w_h)\times[v_{m-1},v_m).
\end{equation}
By Definition~\ref{def:f-sequence}, for each $h\in[H]$ and $m\in[M]$, and all $(i,j)\in\mathcal{W}\times \mathcal{V}$ such that $(\phi_1(i),\phi_2(j))\in [w_{h-1},w_h)\times [v_{m-1},v_m)$, $\mu^N_{i,j}=f(\phi_1(i),\phi_2(j))=\mu_{h,m}.$
Loosely speaking, this is the simplest model where service rate depends on both task and server types.
It does have all the analytical challenges involving order-dependence of queues as mentioned earlier. 
However, as we will see, after appropriate pruning step, it will be analytically tractable in the large-scale setup.

Note that~\eqref{eq:f-step} implies that if $\phi_1(i_1)$ and $\phi_1(i_2)$ are both in $[w_{h-1},w_h)$, the dispatchers $i_1$ and $i_2$ are statistically indistinguishable since the rows $\boldsymbol\mu^N_{i_1}=(\mu^N_{i_1,1},...,\mu^N_{i_1,N})$ and $\boldsymbol\mu^N_{i_2}=(\mu^N_{i_2,1},...,\mu^N_{i_2,N})$ are identical. 
Hence, dispatchers can be classified into finite classes: Let 
$$\mathcal{W}^N_h=\Big\{i\in\mathcal{W}^N:\phi_1(i)\in[w_{h-1},w_h)\Big\}\quad\forall\ h\in[H]\qquad\text{and}\qquad \mathcal{W}^N=\cup_{h\in[H]}\mathcal{W}^N_h.$$  
Similarly, if the columns $j_1$ and $j_2$ of $\mathcal{U}^N$ are the same, servers $j_1$ and $j_2$ can be viewed as the same type. Let 
$$\mathcal{V}^N_m=\{j\in\mathcal{V}^N:\phi_2(j)\in[v_{m-1},v_m)\}\quad\forall\ m\in[M]\qquad\text{and}\qquad \mathcal{V}^N=\cup_{m\in[M]}\mathcal{V}^N_m.$$ 
Since we are interested in large-scale systems, we need to consider the $f$-sequence $\{G^N\}_{N\in\N}$ in a certain asymptotic regime. 
For the asymptotic analysis and to avoid heavy traffic, we consider the sequence in the appropriately defined subcritical regime. 
For the homogeneous case~\cite{MBLW16-3}, if the sequence of systems in the subcritical regime, then the ratio of the total arrival rate to the total service rate in the $N$-th system converges to a fixed constant number strictly less than 1. 
However, the subcritical regime cannot be defined for the above heterogeneous scenario in the same way. 
Since by the same server provides various service rates to different tasks, the total service rate defined by the sum of service rates of all servers is no longer meaningful. 
We define the subcritical regime as follows.
\begin{defn}[Subcritical regime]\label{def: subcritical-finite}The sequence of systems $\{G^N\}_{N\in\N}$  with stepwise $f$ as in~\eqref{eq:f-step}, is said to be in the \textit{subcritical regime} if the followings are satisfied:
\begin{enumerate}[\normalfont(i)]
    \item For each $h\in[H]$, $\lim_{N\rightarrow\infty}\sum_{i\in\mathcal{W}^N_h}\lambda^N_i/N=\lambda_h>0$;
    \item For each $m\in[M]$, $\lim_{N\rightarrow\infty}\sum_{j\in\mathcal{V}^N}\mathds{1}_{(j\in\mathcal{V}^N_m)}/N=v_m-v_{m-1}$;
    \item There exists a matrix  $\mathbf{p}=(p_{h,m},h\in[H],m\in[M])\in[0,1)^{H\times M}$ with unit row sums, such that for all $m\in[M]$,    \begin{equation}\label{eq:subcritical-finite}
       \sum_{h\in[H]} \frac{\lambda_h p_{h,m}}{\mu_{h,m}(v_m-v_{m-1})}<1.
    \end{equation}
    In this case, we also call the sequence $\mathbf{p}$-subcritical.
\end{enumerate}
\end{defn}
Conditions (i) and (ii) in Definition~\ref{def: subcritical-finite} imply that the system scales proportionally in terms of total arrivals rates at dispatchers of each class and the number of servers of each type. 
Condition (iii) indicates that the load per server is strictly less than 1 for all servers under the open-loop probabilistic routing policy where task of type $h$ is assigned to one of the servers of type $m$ (uniformly at random within this type) with probability $p_{h,m}$. 
Note that the matrix $\mathbf{p}$ in the definition of subcritical regime may not be unique. 

It is well-known in the literature~\cite{Stolyar15, Stolyar17, MBLW16-3} that for the homogeneous setting, the subcritical regime not only guarantees stability, but also implies certain performance features such as zero-queueing under policies like JSQ and JIQ in large-scale systems. 
Next, we discuss how we can make this zero-queueing property hold in the heterogeneous scenario as well using the two approaches mentioned in the introduction. 
These are the contents of Sections~\ref{sec:dispatcher-independent} and~\ref{sec:server-indep.}, respectively.

%Recall from earlier discussion that the main challenge in the current setup is the service rate depends on \emph{both} dispatcher and server types. Interestingly, in the $\mathbf{p}$-subcritical regime, for large enough $N$, (a) we can construct a subsystem $\Tilde{G}^N$ by intelligently reserving the processing capacity of each server for ``good" tasks only. The constructed system $\Tilde{G}^N$ is a union of \textit{dispatcher-independent} systems, where the service rate only depends on server-type. (b) We can design a speed-priority routing policy for the system $G^N$ under which the evolution of $G^N$ can be coupled with a system $\hat{G}^N$ a union of \textit{server-independent} systems, where the service rate only depends on dispatcher-type. 

\subsubsection{Intelligent Capacity Reservation and Dispatching (ICRD)}\label{sec:dispatcher-independent}
In this section, we will introduce the ICRD approach. 
Specifically, via carefully pruning the service rate matrix (i.e., reserving the capacity of servers for some tasks), we will construct a sub-system $\Tilde{G}^N\subseteq G^N$ which is a union of $H$ disjoint task-independent systems.
We will show that the sequence $\{\Tilde{G}^N\}_{N}$ has the zero-queueing property under the JIQ policy (Theorem~\ref{thm:fluid-limit-2}). 

Recall the parameters $(\mathbf{w},\mathbf{v})$, $\lambda_h$, $\mu_{h,m}$, $h\in[H]$, $m\in[M]$ as introduced before as assume that the sequence $\{G^N\}_{N}$ is in $\mathbf{p}$-subcritical regime for 
some matrix $\mathbf{p}$ as in Definition~\ref{def: subcritical-finite}. 
Denote $\varepsilon^*_m\coloneqq (v_m-v_{m-1})-\sum_{h\in[H]}\frac{\lambda_hp_{h,m}}{\mu_{h,m}}$, $m\in[M]$ and let $\boldsymbol\varepsilon\coloneqq(\varepsilon_{h,m})_{h\in[H],m\in[M]}$.
Then define a polyhedron $\mathrm{Poly}(\mathbf{p})$ as follows:
\begin{equation}\label{eq:poly}
    \mathrm{Poly}(\mathbf{p})\coloneqq\Big\{\boldsymbol\varepsilon=\big(\varepsilon_{h,m},h\in[H],m\in[M]\big)\in[0,1)^{H\times M}:\boldsymbol\varepsilon \text{ satisfies } \eqref{defn:poly-P}\Big\}
\end{equation}
\begin{equation}\label{defn:poly-P}
    \begin{split}    \varepsilon_{h,1}:\cdots:\varepsilon_{h,M}  =  p_{h,1}:\cdots:p_{h,M},\ \ \forall h\in[H],\qquad
        \sum_{h\in[H]}\varepsilon_{h,m}  \leq  \varepsilon^*_m,\ \  \forall m\in[M].
    \end{split}
\end{equation}
By the definition of the $\mathbf{p}$-subcritical regime, it is easy to check that the polyhedron $\mathrm{Poly}(\mathbf{p})$ is non-empty. \\

\noindent
\textbf{Intelligent Capacity Reservation.} Consider any fixed feasible solution $\boldsymbol\varepsilon\in\mathrm{Poly}(\mathbf{p})$. 
Using such an $\boldsymbol\varepsilon$, for system $G^N$, construct a sub-system $\Tilde{G}^N(\mathbf{p},\boldsymbol\varepsilon)=(\mathcal{W}^N,\mathcal{V}^N,\boldsymbol\lambda^N,\Tilde{\mathcal{U}}^N)$ as follows: 
\begin{itemize}
    \item Step 1: For each $\mathcal{V}^N_m$ in $\Tilde{G}^N(\mathbf{p},\boldsymbol\varepsilon)$, we divide it into $H+1$ separate sets $\big\{\mathcal{V}^N_{h,m}\big\}_{h\in[H]}\bigcup \mathcal{V}^N_{0,m}$ such that\begin{itemize}
        \item[(a)]  $|\Tilde{\mathcal{V}}^N_{h,m}|=\big\lfloor N(\frac{\lambda_hp_{h,m}}{\mu_{h,m}}+\varepsilon_{h,m})\big\rfloor$, $h\in[H]$
        \item[(b)] $|\Tilde{\mathcal{V}}^N_{0,m}|=|\Tilde{\mathcal{V}}^N_m|-\sum_{h\in[H]}|\Tilde{\mathcal{V}}^N_{h,m}|$ .
    \end{itemize}
    \item Step 2: For each, $h\in[H]$, dispatchers in $\mathcal{W}^N_h$ is allowed to assign tasks only to servers in $\bigcup_{m\in[M]}\mathcal{V}^N_{h,m}$. That is, we set  $\Tilde{\mu}^N_{i,j}=\mu^N_{i,j}=\mu_{h,m}$, for $i\in\mathcal{W}^N_h$ and server $j\in\cup_{m\in[M]}\mathcal{V}^N_{h.m}$, and set $\Tilde{u}^N_{i,j}=0$, otherwise.
\end{itemize}
Note that due to Poisson thinning, the constructed system $\Tilde{G}^N(\mathbf{p},\boldsymbol\varepsilon)$ can now be viewed as a disjoint union of $H$ separate dispatcher-independent (i.e., service rates depend only on server-types) systems as follows: 
For $h\in[H]$, $\Tilde{G}^N_h(\mathbf{p},\boldsymbol\varepsilon)$ contains dispatchers $\mathcal{W}^N_h$ and servers $\bigcup_{m\in[M]}\mathcal{V}^N_{h,m}$. 
Also, for each $h\in[H]$, $
\{\Tilde{G}^N_h(\mathbf{p},\boldsymbol\varepsilon)\}_N$ is in the subcritical regime since we have the following relationship between the total arrival rate and the total service rate: $$\lim_{N\rightarrow\infty}\frac{\sum_{i\in\mathcal{W}^N_h}\lambda^N_i}{\sum_{m\in[M]}\mu_{h,m}\big\lfloor N(\frac{\lambda_hp_{h,m}}{\mu_{h,m}}+\varepsilon_{h,m})\big\rfloor}=\frac{\lambda_h}{\lambda_h+\sum_{m\in[M]}\mu_{h,m}\varepsilon_{h,m}}<1.$$

\noindent
\textbf{Dispatching.} For the system $\Tilde{G}^N(\mathbf{p},\boldsymbol\varepsilon)$, each dispatcher will route new tasks under the ordinary JIQ policy. More specifically, when a task of type $h$ arrives, it will be assigned to one of idle servers uniformly at random in $\bigcup_{m\in[M]}\mathcal{V}^N_{h,m}$, if any exists. If no idle servers are available, the task is routed to one of the servers in $\bigcup_{m\in[M]}\mathcal{V}^N_{h,m}$, chosen uniformly at random. \\
%Recall that $\boldsymbol\mu=(\mu_1,...,\mu_K)$ is the set containing all distinct values of $\mu_{h,m}$, $(h,m)\in [H]\times [M]$. 

\noindent
\textbf{System State.}
Let $X^N_{h,m,l}(t)$ be the number of servers in $\Tilde{\mathcal{V}}^N_{h,m}$ with queue length at least $l\in \N_0$ for $h\in[H]$ and $m\in[M]$ at time $t$.  
Let $\Tilde{X}^N_{h,m,l}(t)=\frac{X^N_{h,m,l}(t)}{N}$ be the scaled quantities and consider 
$\Tilde{X}^N(t)=(\Tilde{X}^N_{h,m,l}(t),h\in[H],m\in[M],l\in\N_0)$ 
to be the system state for $\Tilde{G}^N(\mathbf{p},\boldsymbol\varepsilon)$. For any $N\geq 1$, we view $\Tilde{X}^N$ as an element of the common space 
\begin{equation*}
    \begin{split}
        \Tilde{\chi}=\Big\{\Tilde{X}=(\Tilde{X}_{h,m,l},h\in[H],m\in[M],l\in\N_0):\forall (h,m)\in[H]\times[M],\ 
        \frac{\lambda_hp_{h,m}}{\mu_{h,m}}+\varepsilon_{h,m}\geq\Tilde{X}_{h,m,0}\geq \Tilde{X}_{h,m,1}\geq\cdots\geq0\Big\},
    \end{split}
\end{equation*}
equipped with metric $$\rho(\Tilde{X},\Tilde{X}')=\sum_{h\in[H]}\sum_{m\in[M]}\sum_{l\in\N_0}2^{-k}\frac{|\Tilde{X}_{h,m,l}-\Tilde{X}_{h,m,l}'|}{1+|\Tilde{X}_{h,m,l}-\Tilde{X}_{h,m,l}'|}.$$
The next theorem states that the process $\Tilde{X}^N(t)$ is ergodic and its stationary distribution $\Tilde{X}^N(\infty)$ converges to a deterministic point $\Tilde{x}^*$ as $N\rightarrow\infty$, where $\Tilde{x}^*=(\Tilde{x}^*_{h,m,l},h\in[H],m\in[M],l\in\N_0)$ and 
\begin{equation}\label{eq:fixed-pt-1}
\Tilde{x}^*_{h,m,1}=\frac{\lambda_hp_{h,m}}{\mu_{h,m}}, \quad \Tilde{x}^*_{h,m,l}=0, \quad \forall h\in[H], m\in[M], l\geq 2
\end{equation}
\begin{theorem}\label{thm:fluid-limit-2}
Consider the $f$-sequence $\{G^N\}_{N}$ in the $\mathbf{p}$-subcritical regime, a fixed $\boldsymbol\varepsilon\in\mathrm{Poly}(\mathbf{p})$, and the sequence $\{\Tilde{G}^N(\mathbf{p},\boldsymbol\varepsilon)\}_{N}$ as constructed above. 
The sequence $\{\Tilde{G}^N(\mathbf{p},\boldsymbol\varepsilon)\}_{N}$ is still in the $\mathbf{p}$-subcritical regime and the Markov chain $\Tilde{X}^N(\cdot)$ is ergodic, and $\Tilde{X}^N(\infty)\dto\Tilde{x}^*$, where $\Tilde{x}^*$ is given in~\eqref{eq:fixed-pt-1}.
\end{theorem}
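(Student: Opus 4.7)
The overall plan is to exploit the decomposition built into the ICRD construction and then reduce the problem to a finite collection of dispatcher-independent (heterogeneous-server) systems under JIQ, for which mean-field and fluid-limit techniques apply cleanly.

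First I would verify the three structural claims needed before any dynamical analysis. (a) By construction, the sets $\{\cup_m \tilde{\mathcal{V}}^N_{h,m}\}_{h\in[H]}$ are disjoint, and dispatchers in $\mathcal{W}^N_h$ route only into $\cup_m \tilde{\mathcal{V}}^N_{h,m}$; so the sub-systems $\{\tilde{G}^N_h(\mathbf{p},\boldsymbol\varepsilon)\}_{h\in[H]}$ evolve as independent Markov chains, and it suffices to prove the claim for each $h$. (b) Within $\tilde{G}^N_h(\mathbf{p},\boldsymbol\varepsilon)$, the service rate $\tilde{\mu}^N_{i,j}=\mu_{h,m}$ depends only on $m$, so this is a dispatcher-independent (heterogeneous-server) system. (c) The load calculation shown in the excerpt gives asymptotic per-server load $\lambda_h/(\lambda_h+\sum_m \mu_{h,m}\varepsilon_{h,m})<1$, so each $\tilde{G}^N_h$ lies in its subcritical regime, which one also easily rewrites in the form of Definition~\ref{def: subcritical-finite} applied to the reduced system. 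Ergodicity of $\tilde{X}^N(\cdot)$ then follows from standard Foster--Lyapunov arguments on each sub-chain (a weighted sum of queue lengths serves as a Lyapunov function, using $\mathbf{p}$-subcriticality to obtain a negative drift outside a compact set), together with aggregate Poisson-arrival thinning.

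Next I would carry out the fluid/mean-field analysis for a single sub-system $\tilde{G}^N_h(\mathbf{p},\boldsymbol\varepsilon)$ under JIQ. Because this is now a dispatcher-independent system whose servers split into finitely many statistically identical groups $\tilde{\mathcal{V}}^N_{h,m}$, $m\in[M]$, the process $(\tilde{X}^N_{h,m,l}(\cdot))$ is a Markov jump process whose generator I would expand in $1/N$. Writing out the transition rates (token-based JIQ routes each arrival uniformly among all currently idle servers in $\cup_m \tilde{\mathcal{V}}^N_{h,m}$, and type-$m$ servers complete tasks at rate $\mu_{h,m}$), I would identify the fluid ODE
\begin{equation*}
\dot{\tilde x}_{h,m,l} = \frac{\lambda_h(\tilde x_{h,m,0}-\tilde x_{h,m,1})}{\sum_{m'\in[M]}(|\tilde{\mathcal{V}}_{h,m'}|/N - \tilde x_{h,m',0})}\,\mathds{1}_{\{l=1\}} + (\text{overflow term})\,\mathds{1}_{\{l\geq 2\}} - \mu_{h,m}(\tilde x_{h,m,l}-\tilde x_{h,m,l+1}),
\end{equation*}
valid as long as the idle mass is positive. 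I would then show that the candidate point $\tilde x^*$ in~\eqref{eq:fixed-pt-1} is a fixed point (all $\tilde x^*_{h,m,l}=0$ for $l\geq 2$, together with $\tilde x^*_{h,m,1}=\lambda_h p_{h,m}/\mu_{h,m}$ leaves a strictly positive idle mass equal to $\varepsilon_{h,m}$, so the arrival term reduces to the JIQ regime and balances service), and argue global attractivity: using a Lyapunov function of the form $L(\tilde x) = \sum_{m,l\geq 1}\mu_{h,m}(\tilde x_{h,m,l}-\tilde x^*_{h,m,l})^+ + \sum_{m,l\geq 2} \tilde x_{h,m,l}$ (or the coupling-based monotonicity estimates à la Stolyar~\cite{Stolyar17,AS15}), I would show $L$ decreases to $0$ along trajectories, so every fluid trajectory converges to $\tilde x^*$.

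Finally I would transfer the fluid result into the stationary statement. Standard martingale/functional-LLN arguments (Kurtz-type) give that finite-horizon trajectories $\tilde X^N_h(\cdot)$ converge in $\mathbb{D}([0,\infty),\tilde{\chi}_h)$ to the unique fluid solution. To upgrade this to convergence of stationary distributions $\tilde X^N(\infty)\Rightarrow \tilde x^*$, I would establish tightness of $\{\tilde X^N(\infty)\}_N$ (via the Lyapunov bound used for ergodicity, which yields uniform moment bounds on $\sum_l l\,\tilde X^N_{h,m,l}(\infty)$) and then combine tightness with global asymptotic stability of $\tilde x^*$ in the usual way: any subsequential limit is an invariant measure for the fluid dynamics, but global stability forces it to be $\delta_{\tilde x^*}$. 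Assembling the $H$ independent sub-systems gives the full claim.

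The main obstacle I expect is the global attractivity step of the fluid ODE: although each sub-system is dispatcher-independent, the JIQ arrival term couples the server-types through the shared pool of idle servers, so a naive coordinate-wise monotonicity is not available. Handling this likely requires either the coupling/ordering arguments of~\cite{Stolyar17,AS15} adapted to finitely many server classes, or a carefully chosen Lyapunov function that exploits $\sum_h \varepsilon_{h,m}\leq \varepsilon^*_m$ to guarantee a strictly positive idle mass at the fixed point. Everything else—decomposition, subcriticality, generator expansion, tightness—is essentially routine once the fluid convergence is in place.
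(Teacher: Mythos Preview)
Your structural reduction is exactly what the paper does: decompose $\tilde G^N(\mathbf{p},\boldsymbol\varepsilon)$ into $H$ independent dispatcher-independent sub-systems $\tilde G^N_h$, observe that each one is a heterogeneous-server JIQ system in its own subcritical regime, and analyze them separately. The difference is in what happens next. The paper does not redo any fluid analysis; it simply observes that each $\tilde G^N_h$ fits the model of Stolyar~\cite{AS15} (one arrival stream, finitely many server pools of proportional sizes, JIQ routing, subcritical load) and invokes \cite[Theorem~2]{AS15} as a black box to get ergodicity and $\tilde X^N_h(\infty)\Rightarrow \tilde x^*_h$. Your program of deriving the fluid ODE, proving global attractivity via a tailored Lyapunov function or coupling, and then running the tightness-plus-global-stability interchange argument is exactly what \cite{AS15} already does, so you are re-proving that result rather than citing it. Both routes are valid; the paper's is one line, yours is a full section.

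One substantive point you glossed over: you assert that $\tilde x^*$ is a fixed point of the JIQ fluid ODE because the idle mass $\varepsilon_{h,m}$ is positive and ``the arrival term balances service,'' but you never check this balance. At $\tilde x^*$ the JIQ arrival rate into pool $m$ is $\lambda_h\,\varepsilon_{h,m}/\sum_{m'}\varepsilon_{h,m'}$ while the departure rate is $\mu_{h,m}\tilde x^*_{h,m,1}=\lambda_h p_{h,m}$, so balance holds if and only if $\varepsilon_{h,m}/\sum_{m'}\varepsilon_{h,m'}=p_{h,m}$, i.e.\ precisely the ratio constraint $\varepsilon_{h,1}:\cdots:\varepsilon_{h,M}=p_{h,1}:\cdots:p_{h,M}$ that defines $\mathrm{Poly}(\mathbf{p})$. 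Without invoking that constraint, the fixed point of the fluid system would in general be different from the $\tilde x^*$ in~\eqref{eq:fixed-pt-1}. The paper makes this step explicit (see the sentence following~\eqref{eq:fixed-point-dispatcher-indep}); you should too.
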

%\begin{remark}\normalfont
%From this theorem, we know that for large enough $N$, we can construct a subsystem $\Tilde{G}^N$ which is the union of several \textit{task-independent} systems $\Tilde{G}^N_h$, $h\in[H]$ and $\Tilde{G}^N_h$'s are mutually independent. In system $\Tilde{G}^N$, each dispatcher will assign new tasks under JIQ policy which will achieve zero-queueing property.
%\end{remark}
The proof of Theorem \ref{thm:fluid-limit-2} uses \cite[Theorem 2]{AS15}. 
Details are provided in Section~\ref{sec:proof-simple-systems}.
\begin{corollary}[Zero-Queueing]
    Consider the system $\Tilde{G}^N(\mathbf{p},\boldsymbol\varepsilon)$ as constructed above under the ICRD approach. In steady state, the probability that a new task will be assigned to a busy server (a.k.a., the queueing probability) converges to 0 as $N\rightarrow\infty$.
\end{corollary}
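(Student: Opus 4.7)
The plan is to derive the zero-queueing property as a direct consequence of Theorem~\ref{thm:fluid-limit-2} combined with the PASTA property (Poisson Arrivals See Time Averages). First, by Poisson thinning, the arrival stream of type-$h$ tasks at dispatchers in $\mathcal{W}^N_h$ remains a Poisson process, so PASTA ensures that the queueing probability experienced by a type-$h$ arrival in steady state equals the stationary probability that every server in its compatible set $\bigcup_{m\in[M]} \Tilde{\mathcal{V}}^N_{h,m}$ is busy. Since there are only finitely many task classes $h\in[H]$, and the overall queueing probability is a convex combination (over $h$, weighted by arrival fractions $\lambda^N_i / \sum_{i'} \lambda^N_{i'}$) of the per-class queueing probabilities, it suffices to show that for each $h$, the stationary probability of having no idle compatible server vanishes as $N \to \infty$.

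Next, I would evaluate the scaled number of idle compatible servers
\[
I^N_h(\infty)/N = \sum_{m\in[M]} \Big(|\Tilde{\mathcal{V}}^N_{h,m}|/N - \Tilde{X}^N_{h,m,1}(\infty)\Big).
\]
By construction $|\Tilde{\mathcal{V}}^N_{h,m}|/N \to \frac{\lambda_h p_{h,m}}{\mu_{h,m}} + \varepsilon_{h,m}$, and Theorem~\ref{thm:fluid-limit-2} yields $\Tilde{X}^N_{h,m,1}(\infty) \dto \frac{\lambda_h p_{h,m}}{\mu_{h,m}}$ along with $\Tilde{X}^N_{h,m,l}(\infty) \dto 0$ for $l\geq 2$. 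Combining these via the continuous mapping theorem (using that $H$ and $M$ are fixed finite), I obtain $I^N_h(\infty)/N \dto \sum_{m\in[M]} \varepsilon_{h,m}$, a deterministic constant that is strictly positive whenever $\boldsymbol\varepsilon$ is chosen in the relative interior of $\mathrm{Poly}(\mathbf{p})$. Since $I^N_h(\infty)/N$ converges in distribution (hence in probability) to this positive constant, $\PP(I^N_h(\infty) = 0) = \PP(I^N_h(\infty)/N \leq 1/N) \to 0$.

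Combining the two steps, the overall queueing probability is a finite convex combination of terms each vanishing in the limit, so it itself converges to zero. The argument is essentially mechanical given Theorem~\ref{thm:fluid-limit-2}; the only point warranting explicit verification is that a choice of $\boldsymbol\varepsilon$ with $\sum_{m} \varepsilon_{h,m} > 0$ for every $h\in[H]$ actually lies in $\mathrm{Poly}(\mathbf{p})$. This is guaranteed by the strict inequality~\eqref{eq:subcritical-finite} in Definition~\ref{def: subcritical-finite}, which implies $\varepsilon^*_m > 0$ for each $m$, so one can select $\varepsilon_{h,m} = c\, p_{h,m}$ for sufficiently small $c>0$ to satisfy both the proportionality constraint and $\sum_h \varepsilon_{h,m} \leq \varepsilon^*_m$. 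No deeper difficulty arises.
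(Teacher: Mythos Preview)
Your proposal is correct and follows essentially the same route as the paper's proof: both deduce zero-queueing directly from the convergence $\Tilde{X}^N(\infty)\dto\Tilde{x}^*$ of Theorem~\ref{thm:fluid-limit-2}, by showing that the steady-state event ``all compatible servers are busy'' has vanishing probability for each task class $h$. Your version is simply more explicit (PASTA, the per-class decomposition, continuous mapping), and your closing caveat about the strict positivity of $\sum_{m}\varepsilon_{h,m}$ is already built into the ICRD construction via the subcriticality of each $\Tilde{G}^N_h$, so no additional restriction on $\boldsymbol\varepsilon$ is needed.
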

\begin{proof}
The proof is immediate from the following observation.
    Let $P^N_{busy}$ be the probability that in steady state, an arriving task will be assigned to a busy server. Then from Theorem~\ref{thm:fluid-limit-2}, 
    \begin{equation*}
        \begin{split}
\lim_{N\rightarrow\infty}P^N_{busy}&=\lim_{N\rightarrow\infty}\PP\Big(\exists (h,m)\in[H]\times[M]\text{ s.t. }\Tilde{X}^N_{h,m,1}(\infty)\geq \frac{\lambda_hp_{h,m}}{\mu_{h,m}}+\varepsilon_{h,m}\Big)\\
&=\lim_{N\rightarrow\infty}\PP\Big(\exists (h,m)\in[H]\times[M]\text{ s.t. }\Tilde{X}^N_{h,m,1}(\infty)\geq \Tilde{x}^*_{h,m,1}+\varepsilon_{h,m}\Big) = 0.
%&=\PP\Big(\exists (h,m)\in[H]\times[M]\text{ s.t. }\Tilde{x}^*_{h,m,1}\geq \frac{\lambda_hp_{h,m}}{\mu_{h,m}}+\varepsilon_{h,m}\Big)=0.
        \end{split}
    \end{equation*}
\end{proof}

\subsubsection{Speed-Priority Dispatching (SPD)}\label{sec:server-indep.}
In this section, we will show that for large-scale system $G^N$ under a certain SPD policy, its evolution can be viewed as that of the union of $M$ server-independent systems. Such a SPD policy is named as $\mathbf{p}$-based JIQ as described in Definition~\ref{def:p-JIQ}. 
Theorem~\ref{thm:fluid-limit-1} shows that with empty initial state, $\mathbf{p}$-based JIQ achieves asymptotically optimal (i.e., zero-queueing).
Let $\boldsymbol\mu=(\mu_1,...,\mu_K)$ be the set containing all distinct values of $\mu_{h,m}$, $(h,m)\in [H]\times [M]$.\\

\noindent
\textbf{System State.} 
Let $X^N_{m,k}(t)$ be the number of servers in $\mathcal{V}^N_m$ that are serving tasks at rate $\mu_k$ in the $N$-th system at time~$t$. 
Define $\Bar{X}^N_{m,k}(t)\coloneqq\frac{X^N_{m,k}(t)}{N}$. We consider
$\Bar{X}^N(t)\coloneqq\big(\Bar{X}^N_{m,k}(t),m\in[M],k\in[K]\big)$
to be the system state and view $\Bar{X}^N$, for all $N$, as elements of the common space 
\begin{equation*}
    \chi = \Big\{x=\big(x_{m,k},m\in[M],k\in[K]\big)\in \R_+^{M\times K}\Big\},
\end{equation*}
equipped with $\ell_1$-norm. \\

\noindent
Recall the $\mathbf{p}$-subcritical regime for some matrix $\mathbf{p}$ as in Definition~\ref{def: subcritical-finite}.
For each $(m,k)\in[M]\times[K]$, define 
\begin{equation}\label{eq:lambda-p-mk}
    \lambda^{\mathbf{p}}_{m,k}\coloneqq\sum_{h\in[H]}p_{h,m}\lambda_h\mathds{1}_{(\mu_{h,m}=\mu_k)},
\end{equation}
and \begin{equation}\label{eq:x-p-mk}
    x^{\mathbf{p}}_{m,k}\coloneqq\frac{\lambda^{\mathbf{p}}_{m,k}}{\mu_k}.
\end{equation}

We now introduce the $\mathbf{p}$-based JIQ policy as follows.
\begin{defn}[$\mathbf{p}$-based JIQ]\label{def:p-JIQ}
    Consider any fixed dispatcher $i\in\mathcal{W}^N_h$, $h\in[H]$. When a task arrives at the dispatcher $i$, it first selects a target type $m^*$ of servers with discrete distribution $\Bar{p}_h=(p_{h,m})_{m\in[M]}$ and then sends the task to one of idle servers uniformly at random in $\mathcal{V}^N_{m^*}$, if any exists. 
    If no idle servers are available, the task is routed to one of the servers in $\mathcal{V}^N_{m^*}$, chosen uniformly at random.
\end{defn}
\begin{remark}\label{rem:p-JIQ}
\normalfont
Under the $\mathbf{p}$-based JIQ policy, for $h\in[H]$, any dispatcher $i\in\mathcal{W}^N_h$ will assign a new task to a server in $\mathcal{V}^N_{m}$ with discrete distribution $\Bar{p}_h=(p_{h,m})_{m\in[M]}$, independent of the current state of the system. 
Hence, for each $m\in[M]$, $\mathcal{V}^N_m$ will receive tasks from $\mathcal{W}^N_h$, $h\in[H]$ with rate $N\lambda_hp_{h,m}$. Also, servers in $\mathcal{V}^N_m$ are statistically identical. 
Thus, the idea is that by the Poisson thinning property, we can view the system $G^N$ as the union of $M$ server-independent systems $\{\hat{G}^N_m\}_{m\in[M]}$; more details are given in the proof of Theorem~\ref{thm:fluid-limit-1}. For each $\hat{G}^N_m$, it consists of servers as the same as that in $\mathcal{V}^N_m$. The process of tasks arriving at $\hat{G}^N_m$ with service rate $\mu_k$ will be a Poisson process with rate $N\lambda^{\mathbf{p}}_{m,k}$.
\end{remark}

Before proceeding toward large-scale analysis, the next lemma shows the stability of the $\mathbf{p}$-based JIQ policy.
\begin{lemma}\label{lem:stable-p-JIQ}
    For large enough $N$, the system $G^N$ is stable under the $\mathbf{p}$-based JIQ, if the sequence $\{G^N\}_N$ is in the $\mathbf{p}$-subcritical regime.
\end{lemma}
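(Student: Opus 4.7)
The plan is to exploit the Poisson-thinning decomposition described in Remark~\ref{rem:p-JIQ} to reduce the problem to proving positive recurrence of each of the $M$ independent server-independent subsystems $\hat{G}^N_m$, and then to dominate each such subsystem by a random-routing companion whose stability follows from classical M/G/1 theory.

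First I would formalize the decomposition. Under $\mathbf{p}$-based JIQ, each arrival at a dispatcher in $\mathcal{W}^N_h$ chooses its target server type $m$ with probability $p_{h,m}$ independently of the state, and only the subsequent JIQ step within $\mathcal{V}^N_m$ is state-dependent. By Poisson splitting, the aggregate arrival stream into $\hat{G}^N_m$ is an independent Poisson process whose class-$k$ component (i.e., arrivals destined for servers serving at rate $\mu_k$) has rate $\sum_{h:\mu_{h,m}=\mu_k}\sum_{i\in\mathcal{W}^N_h}\lambda^N_i p_{h,m}$, which is asymptotically $N\lambda^{\mathbf{p}}_{m,k}$ by Definition~\ref{def: subcritical-finite}(i); since the within-subsystem dynamics use only the local state, the $M$ subsystems evolve as independent continuous-time Markov chains. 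It therefore suffices to prove positive recurrence of each $\hat{G}^N_m$ for all sufficiently large $N$.

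Next I would introduce a companion random-routing system $\hat{G}^{N,\mathrm{RND}}_m$ that receives exactly the same arrival stream as $\hat{G}^N_m$ but routes each arrival uniformly at random among the servers in $\mathcal{V}^N_m$ irrespective of idleness. Under this routing, a second round of Poisson thinning decouples the servers into independent M/G/1 queues, each with per-server traffic intensity
\[
\rho^N_m \;=\; \frac{N}{|\mathcal{V}^N_m|}\sum_{h\in[H]}\frac{\lambda_h p_{h,m}}{\mu_{h,m}} \;\xrightarrow[N\to\infty]{}\; \frac{1}{v_m-v_{m-1}}\sum_{h\in[H]}\frac{\lambda_h p_{h,m}}{\mu_{h,m}} \;<\; 1
\]
by Definition~\ref{def: subcritical-finite}(ii)--(iii). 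Hence for all $N$ large enough $\rho^N_m<1$, each server of $\hat{G}^{N,\mathrm{RND}}_m$ is a positive recurrent M/G/1 queue with finite stationary mean workload. A monotone pathwise coupling, driven by the common arrival stream and common per-server service clocks (a Poisson process of rate $\mu_{\max}=\max_k\mu_k$ thinned class-dependently with coupled uniform marks), should then yield domination of the total workload of $\hat{G}^N_m$ by that of $\hat{G}^{N,\mathrm{RND}}_m$; the intuition is that JIQ differs from random routing only by activating an idle server in place of a busy one, which can only increase the number of busy servers and accelerate work departure.

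The main obstacle will be making this coupling rigorous under class-dependent service rates with FCFS: the class at the head of a given server's queue may disagree between the two coupled systems, so service completions cannot be synchronised server by server in a naive way. I expect that a careful induction over event times, working with the total expected workload $\sum_j\sum_{l\leq Q_j}\mu_{k(l,j)}^{-1}$ rather than per-server queue lengths and exploiting that every arrival contributes exactly $1/\mu_k$ of expected work in either system, will be sufficient to establish the stochastic domination. Combining positive recurrence of $\hat{G}^{N,\mathrm{RND}}_m$ with this domination yields positive recurrence of $\hat{G}^N_m$ for all large $N$, and hence stability of $G^N$ via the decomposition of Step~1.
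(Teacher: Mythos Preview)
Your first step---the Poisson-thinning decomposition into $M$ independent server-independent subsystems $\hat{G}^N_m$---is exactly what the paper does. For the second step, however, the paper takes a much shorter path: rather than building a random-routing companion and a workload coupling, it directly invokes Foss and Chernova's stability criterion \cite[Theorem~2.5]{FC98}, which covers precisely this situation (identical servers, multiple task classes with class-dependent rates, JIQ routing). That theorem gives positive recurrence as soon as the per-server load
\[
\rho^N_m \;=\; \sum_{k\in[K]}\frac{N\lambda^{\mathbf{p}}_{m,k}}{\mu_k\,|\mathcal{V}^N_m|}
\]
is strictly below~$1$, and Definition~\ref{def: subcritical-finite}(ii)--(iii) gives $\rho^N_m\to\sum_{h}\lambda_h p_{h,m}/\bigl(\mu_{h,m}(v_m-v_{m-1})\bigr)<1$. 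That is the entire argument.

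Your comparison route is not wrong in principle, but the gap you flag is real and you have not closed it. A helpful observation you are close to: once you pass to residual workload, the class-dependence vanishes---a busy server depletes expected workload at unit rate regardless of which class is at the head of the line---so the servers become exchangeable in workload terms and the JIQ-versus-random comparison reduces to the homogeneous case. Even so, establishing pathwise workload domination of JIQ by random routing requires a careful majorization coupling on the vector of ordered server workloads, and you would then still need to pass from stochastic boundedness of total workload to positive recurrence of the full order-dependent Markov state. Both steps are feasible but amount to reproving a special case of \cite{FC98} by hand; citing that result is both cleaner and more general.
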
 
\begin{proof}
    By Remark~\ref{rem:p-JIQ}, it is sufficient to show that each $\hat{G}^N_m$ is stable under JIQ. Since each $\hat{G}^N_m$ is a server-independent system, we can derive the stability of JIQ for each $\hat{G}^N_m$ based on~\cite[Theorem~2.5]{FC98}. For each $m\in[M]$, define 
    \begin{equation}
        \rho^N_m=\max_{J\subseteq\mathcal{V}^N_m}\frac{1}{|J|}\sum_{k\in[K]}\frac{N\lambda^{\mathbf{p}}_{m,k}|J|}{\mu_k|\mathcal{V}^N_m|}=\sum_{k\in[K]}\frac{N\lambda^{\mathbf{p}}_{m,k}}{\mu_k|\mathcal{V}^N_m|}.
    \end{equation}
    $\rho^N_m$ can be understood as the load per server in $\hat{G}^N_m$. Since the sequence $G^N$ is in the $\mathbf{p}$-subcritical regime, we have 
    \begin{equation}
        \sum_{k\in[K]}\frac{N\lambda^{\mathbf{p}}_{m,k}}{\mu_k|\mathcal{V}^N_m|}\xrightarrow{N\rightarrow\infty}\sum_{k\in[K]}\frac{\lambda^{\mathbf{p}}_{m,k}}{\mu_k(v_m-v_{m-1}))}=\sum_{h\in[H]}\frac{\lambda_hp_{h,m}}{\mu_{h,m}(v_m-v_{m-1})}<1,
    \end{equation}
    where the equality comes from the definition of $\lambda^{\mathbf{p}}_{m,k}$. By~\cite[Theorem~2.5]{FC98}, $\rho^N_m<1$ implies the required stability. 
\end{proof}
Note that the large enough $N$ requirement in Lemma~\ref{lem:stable-p-JIQ} is only a technical restriction since our assumption about relative sizes of $\mathcal{V}^N_m$ is asymptotic: that for each $m\in[M]$, $\lim_{N\rightarrow\infty}\frac{|\mathcal{V}^N_m|}{N}=v_m-v_{m-1}$.
The next result shows that the $\mathbf{p}$-based JIQ policy not only ensures the stability of the system but also assigns tasks to idle servers with high probability.
\begin{theorem}\label{thm:fluid-limit-1}
Let the $f$-sequence $\{G^N\}_{N}$ be in the $\mathbf{p}$-subcritical regime and consider it under the the $\mathbf{p}$-based JIQ policy. 
Also, assume that for each $N$-th system, it starts from the all-empty state, i.e., $\Bar{X}^N_{m,k}(0)=0$, $\forall m\in[M], k\in[K]$. Then for any finite $T\geq 0$,  the scaled process $\Bar{X}^N$ converges weakly to the deterministic process $\Bar{X}$ uniformly on $[0,T]$,
where $\Bar{X}(t)=(\Bar{X}_{m,k}(t),m\in[M],k\in[K])$, $t\in[0,T]$ and each $\Bar{X}_{m,k}(t)$ satisfies the following differential equation: 
$$\frac{d \Bar{X}_{m,k}(t)}{dt}=\lambda^{\mathbf{p}}_{m,k}-\mu_k \Bar{X}_{m,k}(t).$$
\end{theorem}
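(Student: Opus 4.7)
The proof strategy I would follow is the one sketched in Remark~\ref{rem:p-JIQ}: decompose $G^N$ under $\mathbf{p}$-based JIQ into $M$ independent server-independent subsystems $\{\hat{G}^N_m\}_{m\in[M]}$ via Poisson thinning, and then invoke a JIQ fluid limit on each component. The two-stage nature of the policy is essential: the first-stage selection $m^*\sim\bar p_h$ is made \emph{independently of the system state}, so the arrival stream entering $\mathcal{V}^N_m$ is an independent superposition of thinned Poisson processes; after grouping tasks by their common service rate, the rate-$\mu_k$ stream arriving at $\hat{G}^N_m$ is Poisson with rate $N\lambda^{\mathbf{p}}_{m,k}$, as in~\eqref{eq:lambda-p-mk}. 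Because the second-stage JIQ step acts only within $\mathcal{V}^N_m$ and servers in $\mathcal{V}^N_m$ are statistically identical, the $M$ subprocesses $\{(\bar X^N_{m,k})_{k\in[K]}\}_{m\in[M]}$ evolve independently, and it suffices to establish the transient fluid limit on a single $\hat{G}^N_m$.

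I would then analyze a fixed $\hat{G}^N_m$, which is a multiclass many-server system with $|\mathcal{V}^N_m|\sim N(v_m-v_{m-1})$ identical servers, JIQ routing, $K$ classes of Poisson arrivals at rates $N\lambda^{\mathbf{p}}_{m,k}$, and per-server load
\[
\sum_{k\in[K]}\frac{\lambda^{\mathbf{p}}_{m,k}}{\mu_k(v_m-v_{m-1})}=\sum_{h\in[H]}\frac{\lambda_h p_{h,m}}{\mu_{h,m}(v_m-v_{m-1})}<1,
\]
where the equality uses the definition of $\lambda^{\mathbf{p}}_{m,k}$ and the final inequality is exactly the $\mathbf{p}$-subcritical condition~\eqref{eq:subcritical-finite}. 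This places $\hat{G}^N_m$ inside the framework of Stolyar~\cite{AS15}, exactly as Theorem~\ref{thm:fluid-limit-2} does. I would write the semi-martingale decomposition of $\bar X^N_{m,k}(\cdot)$, check tightness in $\mathbb{D}([0,T],\chi)$ through an Aldous--Rebolledo criterion (jump sizes are $O(1/N)$ and total jump rates are $O(1)$), and identify every subsequential weak limit as a Lipschitz solution of
\[
\frac{d\bar X_{m,k}(t)}{dt}=\lambda^{\mathbf{p}}_{m,k}-\mu_k\,\bar X_{m,k}(t),
\]
with $\bar X_{m,k}(0)=0$. Uniqueness of this linear ODE, together with independence across $m$, then assembles the joint weak limit required by the theorem.

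The main technical obstacle is justifying why the arrival drift in the limit is $\lambda^{\mathbf{p}}_{m,k}$ rather than $\lambda^{\mathbf{p}}_{m,k}(1-q^N_m(t))$, where $q^N_m(t)$ is the probability that an arrival at $\hat{G}^N_m$ finds no idle server at time $t$; in other words, I must show that in the subcritical, empty-start regime, JIQ places tasks on idle servers with probability tending to $1$, uniformly on $[0,T]$. The candidate fluid solution $\bar X_{m,k}(t)=(\lambda^{\mathbf{p}}_{m,k}/\mu_k)(1-e^{-\mu_k t})$ satisfies
\[
\sum_{k\in[K]}\bar X_{m,k}(t)\leq\sum_{k\in[K]}\frac{\lambda^{\mathbf{p}}_{m,k}}{\mu_k}=\sum_{h\in[H]}\frac{\lambda_h p_{h,m}}{\mu_{h,m}}<v_m-v_{m-1},
\]
so a strictly positive fraction $\delta_m>0$ of servers in $\mathcal{V}^N_m$ is idle at the fluid level for all $t\geq 0$. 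I would promote this to the prelimit by a coupling argument: stochastically dominate the total occupancy $\sum_k X^N_{m,k}(t)$ by an $M/M/|\mathcal{V}^N_m|$-type process whose mean is bounded by $|\mathcal{V}^N_m|(1-\delta_m/2)$ on $[0,T]$, and use a functional LLN for this dominating process to conclude that $q^N_m(t)\pto 0$ uniformly on $[0,T]$. This closes the drift identification and, combined with the tightness and uniqueness argument above, yields the claimed weak convergence.
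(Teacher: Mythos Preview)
Your approach is essentially the paper's: decompose via Poisson thinning into the $M$ independent server-independent systems $\hat{G}^N_m$ (the paper makes this precise by an explicit event-by-event coupling showing the queue processes in $G^N$ and $\hat{G}^N=\cup_m\hat{G}^N_m$ are identical from the all-empty state), and then prove a transient fluid limit on each $\hat{G}^N_m$. One correction: $\hat{G}^N_m$ is \emph{not} in the framework of Stolyar~\cite{AS15}---that paper treats a single task type with heterogeneous server pools, whereas $\hat{G}^N_m$ has identical servers and $K$ task classes---so you cannot lean on~\cite{AS15} here even heuristically; the paper instead proves the needed fluid limit from scratch (Proposition~\ref{prop:fluid-multiclass}). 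For the key step of showing $q^N_m(t)\to 0$ uniformly on $[0,T]$, the paper's execution of your domination idea is to pass to the unit-buffer truncation of $\hat{G}^N_m$, prove the fluid limit there via martingale bounds and the Ethier--Kurtz compactness criterion, and then observe that the truncated and original systems coincide up to the stopping time $\tau^N=\inf\{t:\sum_k \bar X^N_{m,k}(t)\geq 1\}$, with $\mathbb{P}(\tau^N\le T)\to 0$ by the truncated fluid limit and subcriticality; this is exactly the ``$M/M/|\mathcal{V}^N_m|$-type'' comparison you sketch, made concrete.
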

The complete proof of Theorem~\ref{thm:fluid-limit-1} will be provided in Section~\ref{sec:proof-simple-systems}.
\begin{corollary}[Zero-Queueing]
    Consider the system $G^N$ under $\mathbf{p}$-based JIQ. Given the all-empty initial state, for any $T>0$, the probability that during the time interval $[0,T]$, an arriving task will be assigned to a busy server converges to 0 as $N\rightarrow0$.
\end{corollary}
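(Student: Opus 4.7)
The plan is to read off the claim directly from Theorem~\ref{thm:fluid-limit-1} together with the strict subcriticality bound~\eqref{eq:subcritical-finite}. First, I would solve the scalar linear ODE $\frac{d\bar{X}_{m,k}}{dt}=\lambda^{\mathbf{p}}_{m,k}-\mu_k\bar{X}_{m,k}$ with $\bar{X}_{m,k}(0)=0$ explicitly, obtaining $\bar{X}_{m,k}(t)=x^{\mathbf{p}}_{m,k}(1-e^{-\mu_k t})\le x^{\mathbf{p}}_{m,k}$. Summing over $k$ and using the definitions~\eqref{eq:lambda-p-mk}–\eqref{eq:x-p-mk},
\begin{equation*}
\sum_{k\in[K]}\bar{X}_{m,k}(t)\le \sum_{k\in[K]}x^{\mathbf{p}}_{m,k}=\sum_{h\in[H]}\frac{\lambda_h p_{h,m}}{\mu_{h,m}}.
\end{equation*}
The $\mathbf{p}$-subcritical hypothesis~\eqref{eq:subcritical-finite} then gives a slack $\delta_m\coloneqq(v_m-v_{m-1})-\sum_{h}\lambda_h p_{h,m}/\mu_{h,m}>0$ for every $m\in[M]$, uniformly in $t\in[0,T]$.

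Next, I would translate this fluid-level slack to the prelimit. Condition (ii) of Definition~\ref{def: subcritical-finite} gives $|\mathcal{V}^N_m|/N\to v_m-v_{m-1}$, and Theorem~\ref{thm:fluid-limit-1} gives $\sup_{t\in[0,T]}\bigl|\sum_k \bar{X}^N_{m,k}(t)-\sum_k\bar{X}_{m,k}(t)\bigr|\xrightarrow{\PP} 0$ for each $m$. Combining, the scaled number of idle servers in $\mathcal{V}^N_m$,
\begin{equation*}
I^N_m(t)\coloneqq \frac{|\mathcal{V}^N_m|}{N}-\sum_{k\in[K]}\bar{X}^N_{m,k}(t),
\end{equation*}
satisfies $\PP\bigl(\inf_{t\in[0,T]}I^N_m(t)\ge \delta_m/2,\ \forall m\in[M]\bigr)\to 1$ as $N\to\infty$.

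Finally, condition on an arbitrary tagged task arriving at some (random) time $\tau\in[0,T]$; by Definition~\ref{def:p-JIQ} it first draws a target type $m^*$ from $\bar{p}_h$, and the event of being assigned to a busy server equals $\{I^N_{m^*}(\tau-)=0\}$. On the high-probability event above, every $I^N_m(\tau-)$ is bounded below by $\delta_m/2>0$ throughout $[0,T]$, so the conditional probability of the bad event is zero. Hence the (unconditional) probability that a tagged arrival during $[0,T]$ is routed to a busy server tends to $0$. The main obstacle is a bookkeeping one rather than a conceptual one: one must justify passing from the weak convergence of the occupancy process on $\mathbb{D}([0,T],\chi)$ to statements about the instantaneous idle count seen by a Palm-version tagged arrival, which is handled by PASTA together with the uniform-in-$t$ strict positivity of $I^N_m(t)$ noted above.
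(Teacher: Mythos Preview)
Your proposal is correct and follows essentially the same approach as the paper: bound the fluid solution by $x^{\mathbf{p}}_{m,k}$, sum over $k$ to get $\sum_k x^{\mathbf{p}}_{m,k}=\sum_h \lambda_h p_{h,m}/\mu_{h,m}<v_m-v_{m-1}$ from~\eqref{eq:subcritical-finite}, and combine with the uniform convergence in Theorem~\ref{thm:fluid-limit-1} to conclude every server pool retains a positive idle fraction throughout $[0,T]$ with high probability. The paper's proof is a single terse paragraph that leaves the prelimit translation and the tagged-arrival/PASTA step implicit; your version spells these out (the explicit ODE solution, the $\delta_m/2$ slack, and the Palm bookkeeping), which is a genuine improvement in rigor over the paper rather than a different method.
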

\begin{proof}
    Observe that given the idle initial state, $\Bar{X}_{m,k}(t)$ cannot exceed $\frac{\lambda^{\mathbf{p}}_{m,k}}{\mu_k}$ for any $t\geq 0$. $\Bar{X}^N$ converges weakly to $\Bar{X}$ uniformly on $[0,T]$ for any finite $T\geq 0$. Combined with the fact that $$\sum_{k\in[K]}\frac{\lambda^{\mathbf{p}}_{m,k}}{\mu_k}=\sum_{k\in[K]}\frac{\sum_{h\in[H]}p_{h,m}\lambda_h\mathds{1}_{(\mu_{h,m}=\mu_k)}}{\mu_k}=\sum_{h\in[H]}\frac{p_{h,m}\lambda_h}{\mu_{h,m}}< v_m-v_{m-1},$$ we have that with idle initial state and under $\mathbf{p}$-based JIQ, on any finite time interval a new task arriving at the $N$-system $G^N$  will be assigned to an idle server with high probability tending 1 as $N\to\infty$. 
\end{proof}
\begin{remark}\normalfont
Although we only show the transient limit in Theorem~\ref{thm:fluid-limit-1}, we expect that the convergence also holds in steady state, as evidenced numerically in Section~\ref{sec:numerical results}. 
However, extending the convergence result steady state poses significant technical challenge. 
This is primarily due to the lack of favorable properties, such as monotonicity, in the process $\bar{X}^N(t)=(\bar{X}^N_{m,k}(t),m\in[M],k\in[K])$. 
In the literature, monotonicity (state-wise dominance) has been the key to demonstrating the convergence of steady states in qualitatively similar scenarios~\cite{AS15}. 
In the current scenario, however, we do not expect any straightforward monotonicity to hold. 
Indeed, since the expected workload of each task depends on its class, while comparing two servers of the same type, the server with more tasks may have less expected workload, which implies that there is no obvious stochastic dominance between two system states. 
%Instead, we have a high-level idea for showing the convergence of steady state: first, one can show that the steady state of the $N$-th system is in a set of good states, where the maximum queue length is 1, with high probability. Then, Theorem~\ref{thm:fluid-limit-1} can be extended to the case where the initial states are in the set of good states.
\end{remark}

\subsection{General Function Case}\label{sec:general-case}
In this section, we will
extend Theorems~\ref{thm:fluid-limit-2} and~\ref{thm:fluid-limit-1} to the general $f$ case. Since the function $f$ may not be stepwise, we cannot classify dispatchers or servers into finite groups. 
Hence, we need to generalize the notion of $\mathbf{p}$-subcriticality. 
Also, for the consistency in defining the sequence $\{G^N\}_N$, we make some regular assumption about the sequence $\{G^N\}_N$ below:

Recall the membership mapping $\phi_1$ and $\phi_2$ for dispatchers and servers, respectively.
\begin{assumption}\label{ass:f-sequence}
    \begin{enumerate}[\normalfont(i)]
        \item (Arrival rate function) There exists an integrable function $\lambda(\cdot):[0,1)\rightarrow\R_+$ with $\int_0^1 \lambda(x)dx=a>0$ such that %for all $N\in\N$, 
        $\lambda(\phi_1(i))=\lambda^N_i$, $\forall i\in\mathcal{W}^N, N\in\N$.
        \item (Service rate function) The function $f:[0,1)^2\rightarrow\R_+$ is Riemann integrable, and there exists $\mu^o>0$ such that for all $x\in[0,1)$, $|\{y\in[0,1):f(x,y)\geq \mu^o\}|>0$, where $|\cdot|$ is Lebesgue measure. 
        \item (Regularity of membership map) For any subinterval $E\subseteq [0,1)$, $$\lim_{N\rightarrow\infty}\sum_{i\in\mathcal{W}^N}\frac{\mathds{1}_{(\phi_1(i)\in E)}}{W(N)}= \lim_{N\rightarrow\infty}\sum_{j\in\mathcal{V}^N}\frac{\mathds{1}_{(\phi_2(j)\in E)}}{N} =|E|.$$ 
        %and $$\lim_{N\rightarrow\infty}\sum_{j\in\mathcal{V}^N}\frac{\mathds{1}_{(\phi_2(j)\in E)}}{N}=|E|.$$
        \item $\lim_{N\rightarrow\infty}\frac{W(N)}{N}=\xi>0$, where $\xi$ is a constant.
    \end{enumerate}
\end{assumption}
\begin{remark}\normalfont
In Assumption~\ref{ass:f-sequence}, condition (i) implies that the arrival rate of tasks at each dispatcher is determined by the function $\lambda(\cdot)$ and the membership mapping $\phi_1(\cdot)$. Hence, the arrival rates depend on the dispatcher type and can be inhomogeneous. 
One special case is that $\lambda(\cdot)$ is a constant function so the arrival rate of tasks at each dispatcher is the same. 
Next, condition (ii) indicates that for each type of task, there exists some server that can process the task efficiently. 
Condition (iii) suggests that the number of dispatchers (servers) mapped into any subinterval $E\in[0,1)$ increases proportionally to the system size.
Finally, condition (iv) ensures that the total arrival rate scales proportionally with the total number of servers. 
This is a standard assumption in the literature; see for example~\cite{TX13, TX17}.
However, instead of (iv), one could also assume that $\lambda^N_i=\frac{N}{W(N)}\lambda(\phi_1(i))$ in (i), which would allow the number of task types to scale differently from the number of servers. 
%Combining (i) and (iv), we obtain that $\lambda\coloneqq\lim_{N\rightarrow\infty}\sum_{i\in\mathcal{W}^N}\lambda^N_i/N=a/\xi$, which can be roughly understood as the average load per server.
\end{remark}
Based on the above assumption, we define the subcritical regime for the general $f$-sequence as follows.
\begin{defn}[Subcritical regime]\label{def: subcritical-general}
The $f$-sequence $\{G^N\}_N$ is in the subcritical regime if the following is satisfied:
There exist a pair of partitions $(\mathbf{w},\mathbf{v})=(0=w_0<w_1<\cdots<w_H=1,0=v_0<v_1<\cdots<v_M=1)$ of $[0,1)$ and a matrix $\mathbf{p}\in[0,1)^{H\times M}$ with unit row sums such that 
\begin{equation}\label{eq:P-subcritical-infinite}
    \rho_m(\mathbf{w},\mathbf{v},\mathbf{p})\coloneqq\sum_{h\in [H]}\frac{p_{h,m}\lambda_h}{(v_m-v_{m-1})\mu^*_{h,m}}<1, \quad m\in[M],
\end{equation}
where, for each $h\in[H]$ and $m\in[M]$, $\lambda_h=\frac{1}{\xi}\int_{w_{h-1}}^{w_h}\lambda(x)dx$ and
$\mu^*_{h,m}=\min_{(x,y)\in[w_{h-1},w_h)\times[v_{m-1},v_m)}f(x,y).$
\end{defn}
The above definition of subcritical regime involves a pair of partitions $(\mathbf{w},\mathbf{v})$ and a matrix $\mathbf{p}\in[0,1)^{H\times M}$ with unit row sums, which may not be unique. 
Hence, we will use the term ``$(\mathbf{w},\mathbf{v},\mathbf{p})$-subcritical regime'' to specify the pair of partitions $(\mathbf{w},\mathbf{v})$ and the  matrix $\mathbf{p}\in[0,1)^{H\times M}$ which will be used in the analysis. 
Given the pair of partitions $(\mathbf{w},\mathbf{v})$, we may have multiple stochastic matrices $\mathbf{p}\in [0,1)^{H\times M}$ such that \eqref{eq:P-subcritical-infinite} holds. Different stochastic matrices $\mathbf{p}$ may affect the maximal load per server. Ideally, we need to choose a $\mathbf{p}$ making the maximal load per server as small as possible. Denote $\rho(\mathbf{w},\mathbf{v})\coloneqq\min_{\mathbf{p}}\max_{m}\rho_m(\mathbf{w},\mathbf{v},\mathbf{p})$ as the load per server with partition $(\mathbf{w},\mathbf{v})$.

\begin{remark}\normalfont
An interesting algorithmic and technical question here is: \emph{given a general $f$-sequence how to find an appropriate partition $(\mathbf{w},\mathbf{v})$ and check if the polyhedron defined by \eqref{eq:P-subcritical-infinite} is non-empty to determine subcriticality?}
Even though, this is not the main focus of the current work, in~\ref{sec:find-wvp-subcritical}, we 
have designed an algorithm (Algorithm~\ref{algo: to find subcritical}) that can efficiently find a desired partition $(\mathbf{w},\mathbf{v})$ and a corresponding matrix $\mathbf{p}$ satisfying~\eqref{eq:P-subcritical-infinite} when they exist, and 
otherwise outputs that there must exist some servers suffering from heavy workload no matter how the partition is made.  
\end{remark}

%In Section~\ref{sec:stepwise-f}, we can check the polyhedron defined by \eqref{eq:subcritical-finite} empty to see if the sequence of systems is in the subcritical regime or not. However, for a general $f$-sequence, practically, it is not easy for us to directly find an appropriate partition $(\mathbf{w},\mathbf{v})$ and check the polyhedron defined by \eqref{eq:P-subcritical-infinite} so we design Algorithm~\ref{algo: to find subcritical} to help us. Since how to find the appropriate partition $(\mathbf{w},\mathbf{v})$ can be of independent interest and not affect our goal to extend Theorem~\ref{thm:fluid-limit-2} and Theorem~\ref{thm:fluid-limit-1} to the general $f$ case, we put some discussion about Algorithm~\ref{algo: to find subcritical} in~\ref{sec:find-wvp-subcritical}.

\noindent
\textbf{Finitely many groups of possibly non-identical dispatchers and servers.}
For the rest of the analysis in this section, we assume that the $f$-sequence is in the $(\mathbf{w},\mathbf{v},\mathbf{p})$-subcritical regime with the partition $(\mathbf{w},\mathbf{v})$, where $(\mathbf{w},\mathbf{v})=(0=w_0<w_1<\cdots<w_H=1,0=v_0<v_1<\cdots<v_M=1)$. 
Now, with the tuple $(\mathbf{w},\mathbf{v},\mathbf{p})$, we can construct a $f'$-sequence $\big\{G'^N=(\mathcal{W}^N,\mathcal{V}^N,\boldsymbol\lambda^N,\mathcal{U}'^N)\big\}_N$ such that:
\begin{enumerate}[(i)]
    \item For each $N$, $G^N$ and $G'^N$ have the same sets of dispatchers and servers, respectively. 
    \item For any $(x,y)\in[w_{h-1},w_{h})\times[v_{m-1},v_m)$, 
    \begin{equation}\label{eq:f'}
        f'(x,y)=\min_{(a,b)\in[w_{h-1},w_{h})\times[v_{m-1},v_m)}f(x,y).
    \end{equation}
\end{enumerate}
It is straightforward to check that the $f'$-sequence $\{G'^N\}_N$ is also in the $(\mathbf{w},\mathbf{v},\mathbf{p})$-subcritical regime and is a sequence with finite types of dispatchers and servers that we have discussed in Section~\ref{sec:stepwise-f}. 
Since by the definition, $f'(x,y)\leq f(x,y)$ for all $(x,y)\in[0,1)^2$, it is intuitive that the system $G^N$ will have better performance than the system $G'^N$ in terms of queue length, which we will formalize using coupling arguments in Section~\ref{sec:proof-general-systems}.
Now, for each $N$, we split the sets $\mathcal{W}^N$ and $\mathcal{V}^N$ into subsets based on the partition $(\mathbf{w},\mathbf{v})$. Let
\begin{align*}
    \mathcal{W}^N_h&=\Big\{i\in\mathcal{W}^N:\phi_1(i)\in[w_{h-1},w_h)\Big\}\text{ for each }h\in[H] \text { and  }\mathcal{W}^N=\bigcup_{h\in[H]}\mathcal{W}^N_h\\
    \mathcal{V}^N_m&=\Big\{j\in\mathcal{V}^N:\phi_2(j)\in[v_{m-1},v_m)\Big\}\text{ for each }m\in[M] \text { and  }\mathcal{V}^N=\bigcup_{m\in[M]}\mathcal{V}^N_m.
\end{align*}
Crucially, however, note that the classification of dispatchers and servers here no longer means that the dispatchers or servers in the same class are statistically identical. 
It is only used in constructing the subsystem via ICRD and designing the $\mathbf{p}$-based JIQ policy.\\

\noindent
\textbf{ICRD approach.}
Give the above groups of dispatchers and servers, construct a subsystem $\Tilde{G}^N\subseteq G^N$ via the ICRD approach in Section~\ref{sec:dispatcher-independent} by choosing any $\boldsymbol\varepsilon\in\mathrm{Poly}(\mathbf{p})$ in~\eqref{eq:poly}. %\blue{Note that $\Tilde{G}^N$ is a union of $H$ disjoint systems $\Tilde{G}^N_h$.}
The theorem below then states that the vanilla JIQ policy achieves the asymptotic zero-queueing in the $\Tilde{G}^N$. 

\begin{theorem}%[ICRD]
\label{thm:general-JIQ}
Consider the $f$-sequence $\{G^N\}_N$ in $(\mathbf{w},\mathbf{v},\mathbf{p})$-subcritical regime. 
Choose $\boldsymbol\varepsilon\in\mathrm{Poly}(\mathbf{p})$ in~\eqref{eq:poly}
and consider the subsystem $\{\Tilde{G}^N(\mathbf{p},\boldsymbol\varepsilon)\}_N$ constructed as above under the JIQ policy.
Then the steady-state probability that an arriving task joins a busy server converges to 0 as $N\to\infty$.
%There exists a sequence $\{\Tilde{G}^N(\mathbf{p},\boldsymbol\varepsilon)\}_N$ such that each $\Tilde{G}^N(\mathbf{p},\boldsymbol\varepsilon)$ is a subsystem of $G^N$. When the system $\Tilde{G}^N(\mathbf{p},\boldsymbol\varepsilon)$ running under JIQ policy is at steady state, the system $\Tilde{G}^N(\mathbf{p},\boldsymbol\varepsilon)$ can assign arriving tasks to idle servers with high probability tending to 1 as $N$ goes to infinity. 
\end{theorem}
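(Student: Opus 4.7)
The plan is to reduce the general $f$-sequence to the stepwise case handled by Theorem~\ref{thm:fluid-limit-2} via a sample-path coupling to a lower-envelope analog. Concretely, I would introduce the stepwise minorant $f'$ from~\eqref{eq:f'}, which equals $\mu^*_{h,m}$ on each rectangle $[w_{h-1},w_h)\times[v_{m-1},v_m)$. Since the $(\mathbf{w},\mathbf{v},\mathbf{p})$-subcriticality in Definition~\ref{def: subcritical-general} is phrased in terms of the $\mu^*_{h,m}$'s, the associated $f'$-sequence $\{G'^N\}_N$ is $\mathbf{p}$-subcritical in the sense of Definition~\ref{def: subcritical-finite}. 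Applying the ICRD construction with the same tuple $(\mathbf{p},\boldsymbol\varepsilon)$ to $G'^N$ yields a subsystem $\tilde{G}'^N(\mathbf{p},\boldsymbol\varepsilon)$ with the same server partition $\{\mathcal{V}^N_{h,m}\}$ and compatibility structure as $\tilde{G}^N(\mathbf{p},\boldsymbol\varepsilon)$. Theorem~\ref{thm:fluid-limit-2} then gives that the steady-state probability of an arriving task joining a busy server in $\tilde{G}'^N(\mathbf{p},\boldsymbol\varepsilon)$ under JIQ tends to $0$ as $N\to\infty$.

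Next, I would place both subsystems on a common probability space with shared arrival Poisson processes, shared dispatching tie-breaks, and shared per-server uniformized service clocks at a common upper rate, where each service tick is accepted according to the actual rate of the current head-of-line task. Because $f\geq f'$ pointwise, each service completion in $\tilde{G}'^N$ can be matched by one in $\tilde{G}^N$, with extra ``phantom'' completions in $\tilde{G}^N$ accounting for the rate gap. Starting from the empty state and proceeding inductively over event epochs, the goal is to show that in every reserved subset $\mathcal{V}^N_{h,m}$ the number of idle servers in $\tilde{G}^N$ is at least that in $\tilde{G}'^N$ at all $t\geq 0$. Since $\tilde{G}'^N$ is already stable (Stolyar's analysis applied to each dispatcher-independent $h$-block used in the proof of Theorem~\ref{thm:fluid-limit-2}), a standard regenerative argument then transfers stability and the idle-count inequality from the empty-initial-state transient regime to steady state.

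Finally, under the coupling, whenever a class-$h$ arrival in $\tilde{G}'^N$ finds an idle compatible server in $\cup_m\mathcal{V}^N_{h,m}$, the coupled arrival in $\tilde{G}^N$ does too; hence the steady-state queueing probability in $\tilde{G}^N$ is bounded by that in $\tilde{G}'^N$, and the first step closes the argument. The main obstacle is establishing the coupling inequality itself: because the service rate at a given server depends on the class of its head-of-line task, which can differ across the two coupled systems even when their queue lengths coincide, naive server-by-server queue-length monotonicity need not be preserved. One instead aims to propagate only the weaker statement that the idle-server indicators within each $\mathcal{V}^N_{h,m}$ are monotone, which still suffices for comparing queueing probabilities and is amenable to a case analysis over arrival events, service completions common to both systems, and phantom completions specific to $\tilde{G}^N$, in the spirit of the JIQ monotonicity arguments of~\cite{AS15}.
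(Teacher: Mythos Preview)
Your strategy is the same as the paper's: pass to the stepwise minorant $f'$, note that the resulting $\{G'^N\}_N$ is $\mathbf{p}$-subcritical in the sense of Definition~\ref{def: subcritical-finite}, apply the ICRD construction with the same $(\mathbf{p},\boldsymbol\varepsilon)$ to obtain $\tilde G'^N$, invoke Theorem~\ref{thm:fluid-limit-2} for $\tilde G'^N$, and then couple $\tilde G^N$ below $\tilde G'^N$ pathwise.

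Where you diverge from the paper is in the coupling invariant. You flag task-class dependence of the head-of-line service rate as an obstruction to server-by-server queue-length monotonicity and retreat to a weaker idle-count inequality. The paper shows that this concern is unnecessary: in $\tilde G'^N_h$ the rate at server $j\in\mathcal{V}^N_{h,m}$ is $\mu^*_{h,m}$ \emph{regardless} of the head-of-line class, so for every busy server $j$ the rate in $\tilde G^N_h$ is at least the rate in $\tilde G'^N_h$, irrespective of which task sits at the head. Combined with the exchangeability of servers within each block of $\tilde G'^N_h$ (which allows relabeling so that $Z^N_j\le Z'^N_j$ pointwise), this yields the full queue-length domination $\tilde X^N_{h,m,k}(t)\le \tilde X'^N_{h,m,k}(t)$ for all $k$, which is the paper's Claim~\ref{claim:monotonicity}. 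That stronger invariant is both easier to propagate and directly delivers the steady-state comparison.

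By contrast, your weaker invariant (idle-server counts per block) is awkward to maintain inductively on its own: a departure from a server with queue length exactly one creates a new idle server, so to control idle counts across a departure event you implicitly need information about the number of length-one queues, i.e., more of the queue-length profile. So the ``obstacle'' you identify is not real, and the ``fix'' you propose is the piece that would need more work. The cleanest route is exactly the one you initially dismissed.
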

Note that Theorem~\ref{thm:general-JIQ} does not claim the uniqueness of the fixed point of the limiting system as Theorem~\ref{thm:fluid-limit-2} does. The main reason is that even though we know the total service rate is equal to the total arrival in steady state, the fraction of busy servers cannot be determined since the complexity of the service rate allows that different subsets of servers can provide the same total service rate.
\vspace{.2cm}

\noindent
\textbf{SPD approach.}
The next theorem will show the asymptotic zero-queueing property of $\mathbf{p}$-base JIQ in $G^N$ generalizing Theorem~\ref{thm:fluid-limit-1}.
Let $\boldsymbol\mu^*=(\mu^*_1,...,\mu^*_K)$ be the set containing all distinct values of $\mu^*_{h,m}$, $h\in[H], m\in[M]$.
For $m\in[M]$ and $k\in[K]$, 
\begin{equation*}
    \lambda^{\mathbf{p}}_{m,k}=\sum_{h\in[H]}p_{h,m}\lambda_h\mathds{1}_{(\mu^*_{h,m}=\mu^*_k)}\quad 
\text{and}\quad
    x^{\mathbf{p}}_{m,k}=\frac{\lambda^{\mathbf{p}}_{m,k}}{\mu^*_k}.
\end{equation*}
\begin{theorem}%[SPD]
\label{thm:f-zero-queue}
Consider the $f$-sequence $\{G^N\}_N$ in the $(\mathbf{w},\mathbf{v},\mathbf{p})$-subcritical regime. Assume that for all $N\geq 1$, the system $G^N$ starts from the all-empty state, then under the $\mathbf{p}$-based JIQ policy, for any $T\geq 0$, $\sup_{t\in[0,T]}\Bar{X}^N_m(t)\leq \sum_{k}x^{\mathbf{p}}_{m,k}$ with high probability tending to 1 as $N\rightarrow\infty$, where $\Bar{X}^N_m(t)$ is the fraction of busy servers in $\mathcal{V}^N_m$ at time $t$.
Consequently, with the all-empty initial state, on any finite time interval, the probability that an arriving task is assigned to a busy server converges to 0 as $N\rightarrow\infty$.
\end{theorem}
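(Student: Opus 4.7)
The plan is to reduce the general $f$-sequence to the stepwise $f'$-sequence defined in~\eqref{eq:f'} and then invoke Theorem~\ref{thm:fluid-limit-1}. Since $f'\leq f$ pointwise by construction, and both $G^N$ and $G'^N$ run the same $\mathbf{p}$-based JIQ policy whose target-group choice is state independent, a coupled copy of $G^N$ should process tasks at least as fast as the corresponding copy of $G'^N$, and hence should have at most as many busy servers in each group $\mathcal{V}^N_m$ at every time. Combining this with Theorem~\ref{thm:fluid-limit-1} applied to $G'^N$ will deliver the desired bound $\sup_{t\in[0,T]}\Bar{X}^N_m(t) \leq \sum_k x^{\mathbf{p}}_{m,k}$ with high probability, from which the zero-queueing conclusion is a routine consequence.

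More concretely, I would construct the coupling on a common probability space as follows. Drive the arrival stream at each dispatcher $i$ by identical Poisson processes in both systems, and use common i.i.d.\ uniform variates to select each task's target group via $\bar{p}_h$ and to break ties within a group (say, always assign to the lex-smallest idle server, and use a shared uniform to choose a server when the target group has no idle one). Couple service completions by thinning: for a task of type $i\in\mathcal{W}^N_h$ currently being served at server $j\in\mathcal{V}^N_m$ in $G^N$, run a Poisson clock of rate $f(\phi_1(i),\phi_2(j))$ and declare a tick to be a $G'^N$-completion with probability $\mu^*_{h,m}/f(\phi_1(i),\phi_2(j))$. An inductive event-based argument should then maintain the invariant that the set of busy servers $B^N_m(t)$ in $G^N$ is contained in the corresponding set $B'^N_m(t)$ in $G'^N$: at each arrival, the idle set in $G^N$ is a superset of that in $G'^N$, so the lex-smallest idle servers chosen satisfy $j^N\leq j'^N$, which preserves the inclusion; service completions only shrink these sets.

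Granted the invariant, $\Bar{X}^N_m(t)\leq \Bar{X}'^N_m(t)$ almost surely on $[0,T]$. Since the $f'$-sequence is in the $\mathbf{p}$-subcritical regime of Definition~\ref{def: subcritical-finite} with rates $\mu^*_{h,m}$, Theorem~\ref{thm:fluid-limit-1} yields $\Bar{X}'^N_{m,k}\to \Bar{X}'_{m,k}$ uniformly on $[0,T]$, where the ODE solution from the empty initial state is monotone increasing and bounded by $x^{\mathbf{p}}_{m,k}=\lambda^{\mathbf{p}}_{m,k}/\mu^*_k$. Summing over $k$ and transferring via the coupling gives $\sup_{t\in[0,T]}\Bar{X}^N_m(t)\leq \sum_k x^{\mathbf{p}}_{m,k}$ with high probability. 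The zero-queueing conclusion then follows because $\sum_k x^{\mathbf{p}}_{m,k} = \sum_h p_{h,m}\lambda_h/\mu^*_{h,m} < v_m - v_{m-1}$ by~\eqref{eq:P-subcritical-infinite}, while $|\mathcal{V}^N_m|/N \to v_m-v_{m-1}$ by Assumption~\ref{ass:f-sequence}(iii); hence each group $\mathcal{V}^N_m$ retains a strictly positive fraction of idle servers throughout $[0,T]$, so every arriving task under $\mathbf{p}$-based JIQ finds an idle server in its target group with probability tending to one.

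The main obstacle is the delicate verification of the set-inclusion invariant, especially at arrival events where $G'^N$ has no idle server in the target group but $G^N$ does (so the two systems split between random and idle assignment), and at completion events where the task type being served at a shared server may differ between the two systems, breaking the naive thinning construction. A cleaner fallback is to settle for the coarser stochastic comparison $|B^N_m(t)|\leq_{st}|B'^N_m(t)|$ obtained by generator comparison: the aggregate arrival rate to $\mathcal{V}^N_m$ is identical in both systems (namely $N\sum_h p_{h,m}\lambda_h$), while the aggregate departure rate from $\mathcal{V}^N_m$ in $G^N$ dominates its counterpart in $G'^N$ block by block, so classical stochastic-monotonicity arguments for counting processes yield the required comparison, which is all that Theorem~\ref{thm:f-zero-queue} needs.
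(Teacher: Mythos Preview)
Your high-level plan---reduce to the stepwise $f'$-sequence with $f'\le f$ and invoke Theorem~\ref{thm:fluid-limit-1}---is exactly the paper's plan. The gap is in the comparison step, and your own diagnosis of the difficulty is accurate; the proposed fallback does not close it.

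The set-inclusion invariant $B^N_m(t)\subseteq B'^N_m(t)$ survives arrivals under your lex-smallest rule, but it cannot be propagated through departures: once the same physical server $j$ is busy in both systems with \emph{different} task types, the two service rates at $j$ are incomparable (the inequality $f\ge f'$ only holds for the \emph{same} $(i,j)$ pair), so no thinning ties the two completion epochs in the right direction. Your fallback to a generator comparison on $|B^N_m|$ does not repair this. The busy count is not Markov by itself: its downward rate equals the sum of service rates over currently busy servers, which depends on \emph{which} task types occupy \emph{which} servers, and that hidden information is not ordered between the two systems. The assertion that ``the aggregate departure rate from $\mathcal{V}^N_m$ in $G^N$ dominates its counterpart in $G'^N$ block by block'' is therefore unjustified, and the appeal to ``classical stochastic-monotonicity arguments for counting processes'' does not apply.

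The paper's device is to drop server-by-server tracking and compare instead the \emph{ordered weighted-queue-length vectors} $\mathcal{X}^o_t([v_{m-1},v_m),G^N)\le \mathcal{X}^o_t([v_{m-1},v_m),G'^N)$ componentwise (Lemma~\ref{lem:GN-G'N}). Departures are coupled by pairing the $n$-th ordered server in $G^N$ with the $n$-th ordered server in $G'^N$ and thinning via the ratio $Q^N_{(n)}/Q'^N_{(n)}\le 1$, which is legitimate precisely because the invariant orders these weighted queue lengths. Crucially, this invariant (``Property~I'') also requires every queue to have at most one task and at least one idle server per block; the coupling is therefore run only up to the stopping time $\tau^N$ at which $G'^N$ first violates the busy-fraction bound $\sum_k x^{\mathbf{p}}_{m,k}$, and Theorem~\ref{thm:fluid-limit-1} gives $\mathbb{P}(\tau^N\ge T)\to 1$. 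The restriction to this good event, together with the order-statistics coupling, is the missing ingredient in your proposal.
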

The proofs of Theorem~\ref{thm:general-JIQ} and Theorem~\ref{thm:f-zero-queue} are both based on coupling arguments since we have shown results for the system $G'^N$ in Section~\ref{sec:stepwise-f}. The complete proof is provided in Section~\ref{sec:proof-general-systems}. 

\section{Proof for the Stepwise $f$ Case}\label{sec:proof-simple-systems}
In this section, we will prove Theorems~\ref{thm:fluid-limit-2} and~\ref{thm:fluid-limit-1}, staring with the proof of Theorem~\ref{thm:fluid-limit-2}. 
%First, in Theorem~\ref{thm:fluid-limit-2}, we will construct a subsystem $\Tilde{G}^N$ of $G^N$, and simply use the JIQ policy in the system $\Tilde{G}^N$. 
The key element in this proof is that the system $\Tilde{G}^N$ is a union of $H$ dispatcher-independent systems and use the results of Stolyar~\cite{AS15} where such systems have been analyzed comprehensively. 
For completeness, we have briefly included the model and the main result of \cite{AS15} in~\ref{AS15}.
\begin{proof}[Proof of Theorem~\ref{thm:fluid-limit-2}]
 By the construction of $\Tilde{G}^N(\mathbf{p},\boldsymbol\varepsilon)$, we have that for any fixed $N$, $\Tilde{G}^N_h(\mathbf{p},\boldsymbol\varepsilon)$ evolves independently of $\Tilde{G}^N_{h'}$ for any $h'\neq h\in[H]$. 
 Consequently, $\Tilde{X}^N_h(\cdot)=(\Tilde{X}^N_{h,m,k}(\cdot),m\in[M],k\in\N_0)$, $h\in[H]$ are mutually independent. Hence, we can consider $\{\Tilde{G}^N_h(\mathbf{p},\boldsymbol\varepsilon)\}_{N}$, $h\in[H]$, separately, and  it is sufficient to show that for all $h\in[H]$, $\Tilde{X}^N_h(\cdot)$ is ergodic, and $\Tilde{X}^N_h(\infty)\dto \Tilde{x}^*_h=(\Tilde{x}^*_{h,m,k},m\in[M],k\in \N_0)$, where \begin{equation}
    \Tilde{x}^*_{h,m,1}=\frac{\lambda_hp_{h,m}}{\mu_{h,m}},\quad \Tilde{x}^*_{h,m,k}=0,\quad \forall m\in [M],k\geq 2.
\end{equation}
Fix any $h\in[H]$ and consider $\Tilde{X}^N_h(t)$ as the system state of $\Tilde{G}^N_h$ at time $t\geq 0$ for all $N\geq 1$. 
We need to check that $\{\Tilde{G}^N_h\}_{N\geq 1}$ satisfies the model structure and asymptotic regime proposed in~\cite{AS15}(\ref{AS15}). 
By the construction of $\Tilde{G}^N_h$, we can view the dispatcher set $\Tilde{\mathcal{W}}^N_h$ as one dispatcher that receives tasks at rate $\sum_{i\in\Tilde{\mathcal{W}}^N_h}\Tilde{\lambda}^N_i=N\lambda_h$. 
This is because all dispatchers in $\Tilde{\mathcal{W}}^N_h$ can assign tasks to all servers in $\cup_{m\in[M]}\Tilde{\mathcal{V}}^N_{h,m}$ and for any two dispatchers $i,i'\in\Tilde{\mathcal{W}}^N_h$, if they assign tasks to the same server, the server will provide service with the same rate that does not depend on $i$ or $i'$. 
For the server set $\cup_{m\in[M]}\Tilde{\mathcal{V}}^N_{h,m}$, by construction, it is the union of $M$ server pool $\Tilde{\mathcal{V}}^N_{h,m}$, $m\in[M]$ where servers within each pool are identical. 
Also, note that the size of each $\Tilde{\mathcal{V}}^N_{h,m}$ is $\lfloor N(\frac{\lambda_hp_{h,m}}{\mu_{h,m}}+\varepsilon_{h,m})\rfloor$ and $\sum_{m\in[M]}\lfloor N(\frac{\lambda_hp_{h,m}}{\mu_{h,m}}+\varepsilon_{h,m})\rfloor\mu_{h,m}>N\lambda_h$. Hence, both the arrival rate of tasks and the server pools' sizes increase in proportion to $N$ and satisfy the subcritical load condition. 
Thus, by Theorem~\ref{thm:AS15}, we can conclude that for large enough $N$, $\Tilde{X}^N_h(\cdot)$ is ergodic, and $\Tilde{X}^N_h(\infty)\dto \Tilde{x}^*_h=(\Tilde{x}^*_{h,m,k},m\in[M],k\in \N_0)$, where $\Tilde{x}^*_h$ is the unique solution to the following equation: 
\begin{equation}\label{eq:fixed-point-dispatcher-indep}
    \begin{split}
        \lambda_h=&\sum_{m\in [M]}\mu_{h,m}x^*_{h,m,1}\\
        \frac{\mu_{h,m}\Tilde{x}^*_{h,m,1}}{\frac{\lambda_hp_{h,m}}{\mu_{h,m}}+\varepsilon_{h,m}-\Tilde{x}^*_{h,m,1}}=&\frac{\mu_{h,m'}\Tilde{x}^*_{h,m',1}}{\frac{\lambda_hp_{h,m'}}{\mu_{h,m'}}+\varepsilon_{h,m'}-\Tilde{x}^*_{h,m',1}},\quad m,m'\in[M]\\
        \Tilde{x}^*_{h,m,k}=&0,\quad \forall m\in[M],k\geq 2.
    \end{split}
\end{equation}
Since $\boldsymbol\varepsilon=(\varepsilon_{h,m},h\in[H],m\in[M])\in \mathrm{Poly}(\mathbf{p})$, then for each $h\in[H]$, $$\varepsilon_{h,1}:\cdots:\varepsilon_{h,M}  =  p_{h,1}:\cdots:p_{h,M},$$ which implies that the unique solution to \eqref{eq:fixed-point-dispatcher-indep} is $\Tilde{x}^*_{h,m,1}=\frac{\lambda_hp_{h,m}}{\mu_{h,m}}$, $\forall h\in[H], m\in[M]$.
\end{proof}

Next, we will consider the $\mathbf{p}$-based JIQ as the routing policy used in the heterogeneous systems and prove Theorem~\ref{thm:fluid-limit-1}. 
The main idea here is that under the $\mathbf{p}$-based JIQ, the evolution of the system $G^N$ can be coupled with that of a union of $M$ server-independent systems which route tasks by JIQ. Hence, we need some preliminary results about the server-independent systems about which we provide a thorough discussion in~\ref{app:server-independent-fluid}. 
Also, for the analysis of the evolution of $G^N$ under the $\mathbf{p}$-based JIQ policy, define the Markovian state descriptor as follows: 
For $G^N=(\mathcal{W}^N,\mathcal{V}^N,\boldsymbol\lambda^N,\mathcal{U}^N)$, let $Z^N_j(t)\in\N_0$ be the queue length of server $j\in \mathcal{V}^N$ at time $t\geq 0$. For each server $j\in\mathcal{V}^N$ with $Z^N_j(t)>0$, let 
\begin{equation}\label{defn:queue-of-j}
    X^N_j(t)\coloneqq\Big(x^{(1)}_j(t),...,x^{(Z^N_j(t))}_j(t)\Big)\in\mathcal{H}^{\infty},
\end{equation}
where $x^{(n)}_j(t)$ is the type of the $n^{th}$ task at queue $j$ at time $t$ for $n\in[Z^N_j(t)]$ and $\mathcal{H}^{\infty}$ be the set of finitely terminated
sequences taking values in $\mathcal{H}\coloneqq[H]$. Define the weighted queue length $Q^N_j(X^N_j(t))$ as 
\begin{equation}\label{defn:weighted-queue-length}
    Q^N_j(X^N_j(t))\coloneqq\sum_{n=1}^{Z^N_j(t)}1/\mu^N_{x^{(n)}_j,j}.
\end{equation}
If the server $j$ is idle, let $X^N_j(t)=0$ and $Q^N_j(X^N_j(t))=0$. For convenience, denote $Q^N_j(t)=Q^N_j(X^N_j(t))$ and $\mathbf{Q}^N(t)=\big(Q^N_j(t),j\in\mathcal{V}^N\big)\in\R_+^{N}$.

\begin{proof}[Proof of Theorem~\ref{thm:fluid-limit-1}]
Under the $\mathbf{p}$-based JIQ policy, when a task arrives at dispatcher $i\in \mathcal{W}^N_h$, $h\in[H]$, the dispatcher selects the target type of servers with the discrete distribution $\Bar{p}_h=(p_{h,m})_{m\in[M]}$, which is independent of the current state in the system. 
By Poisson thinning, we can view $G^N$ under the $\mathbf{p}$-based JIQ as a union of $M$ server-independent systems under the JIQ policy. 
Hence, we construct $\hat{G}^N=\cup_{m\in[M]}\hat{G}^N_m$, where $\hat{G}^N_m$ is defined as follows:
for each $m\in[M]$, \begin{itemize}
    \item The set $\hat{\mathcal{V}}^N_m$ of servers in $\hat{G}^N_m$ is the same as $\mathcal{V}^N_m$;
    \item There are $K$ dispatchers in $\hat{G}^N_m$ and denote the dispatcher set as $\hat{\mathcal{W}}^N_m$; each dispatcher $k\in\hat{\mathcal{W}}^N_m$ handles the assignment of only one type of tasks called type $k$ tasks; the arrival of tasks at dispatcher $k\in\hat{\mathcal{W}}^N_m$ is a Poisson process with rate $N\sum_{h\in[H]}\lambda^{\mathbf{p}}_{m,k}$, independently of the other processes;
    \item The service time of type $k$ task in $\hat{G}^N_m$ is exponentially distributed with mean $\frac{1}{\mu_k}$.
\end{itemize}
For the system $\hat{G}^N$, let $\hat{Z}^N_j(t)$ be the queue length of server $j\in\hat{G}^N$ at time $t$ and let $$\hat{X}^N_j(t)\coloneqq\Big(\hat{x}^{(1)}_{j}(t),...,\hat{x}^{(\hat{Z}^N_j(t))}_{j}(t)\Big)\in\mathcal{K}^{\infty}$$ 
be the ordering of tasks at the queue of the server $j$, where $\mathcal{K}^{\infty}$ be the set of finitely terminated sequences taking values in $\mathcal{K}\coloneqq[K]$; $\hat{X}^N_j(t)=0$ if the server $j$ is idle. 
Define the weighted queue length $\hat{Q}^N_j(t)$ of server $j\in\hat{G}^N$ at time $t$ as:
$$\hat{Q}^N_j(t)\coloneqq\sum_{n=1}^{\hat{Z}^N_j(t)}\sum_{k\in[K]}\mathds{1}_{(\hat{x}^{(n)}_{j}(t)=k)}\frac{1}{\mu_k}.$$
Let $\hat{X}^N_{m,k}(t)$ be the number of servers in $\hat{\mathcal{V}}^N_m$ that are serving type $k$ tasks at time $t$, $k\in\hat{\mathcal{W}}^N_m$, $m\in [M]$, $N\geq 1$, and $\hat{x}^N_{m,k}(t)=\frac{\hat{X}^N_{m,k}(t)}{N}$. 
Define $\hat{x}^N(t):=(\hat{x}^N_{m,k}(t),m\in[M],k\in[K])$, $t\geq 0$.
\vspace{3mm}

\noindent
Next, we show that starting from all empty state, under a natural coupling as described below, the system queue length process in $G^N$ and $\hat{G}^N$ are identical.
This will mean that it is enough to establish fluid limit for the evolution of queues in  $\hat{G}^N$. 
The system queue length process in $G^N$ and $\hat{G}^N$ are identical, if for each $j\in\mathcal{V}^N$, $Z^N_j=\hat{Z}^N_j$, and when $Z^N_j=\hat{Z}^N_j \geq 1$, server $j$ in both $G^N$ and $\Tilde{G}^N$ processes their $n$-th tasks at the same rate %, i.e., $\mu^N_{x^{(n)}_{j}(t_0),j}=\sum_{k\in[K]}\mu_{k}\mathds{1}_{(\hat{x}^{(n)}_{j}(t)=k)}$ 
for $1\leq n\leq Z^N_j$.
\\ 

\noindent
\textbf{Coupling construction.} 
We will use forward induction on event times.
Suppose at time $t_0$ the system states of $G^N$ and $\hat{G}^N$ are identical.
Now, let $t_1$ be the next event time.
We couple the evolution of queues in $\hat{G}^N$ and $G^N$ in the following way:
\vspace{.2cm}

\noindent
\textit{Departure.} By the definition of the same state of $G^N$ and $\hat{G}^N$, for all $j\in\mathcal{V}^N(\text{or }\hat{\mathcal{V}}^N)$, server $j$ in $G^N$ and server $j$ in $\hat{G}^N$ are processing tasks at the same rate. Hence, we synchronize the departure epochs of all servers in the two systems. That is, if there is a departure from server $j$ in system $G^N$, then a departure happens at server $j$ in system $\hat{G}^N$ as well.
\vspace{.2cm}

\noindent
\textit{Arrival.} We know that the rate at which the server set $\mathcal{V}^N_m$ in system $G^N$ receives tasks which will be processed at rate $\mu_k$ is given by $N\sum_{h\in[H]}\lambda_hp_{h,m'}\mathds{1}_{(\mu_{h,m'}=\mu_k)}=N\lambda^{\mathbf{p}}_{m',k}$. By the construction of $\hat{G}^N$, the arrival rate of tasks at dispatcher $k$ in $\hat{G}^N_{m}$ is $N\lambda^{\mathbf{p}}_{m',k}$ so we can synchronize the arrival processes of systems $G^N$ and $\hat{G}^N$ as follows.  By the construction of $\hat{G}^N$ Suppose $t_1$ is a time for a new task arriving at dispatcher $i\in\mathcal{W}^N_h$, $h\in[H]$. Also, suppose it selects the target type $m\in[M]$ of servers (using the discrete distribution $\Bar{p}_h$) and by JIQ, the new task is assigned to one of the servers in $\mathcal{V}^N_m$. Then, for the system $\hat{G}^N$, at time $t_1$, we let  a new task arrive at the dispatcher $k\in\hat{\mathcal{W}}^N_m$ in $\hat{G}^N_m$ and assign it to server $j\in\hat{\mathcal{V}}^N_m$ as well.
\vspace{.2cm}

\noindent
It is easy to check that under the natural coupling, the two systems evolve according to their own statistical laws, and the system states of $G^N$ and $\hat{G}^N$ remain identical if they start from the same initial state. 
Hence, in order to show the desired theorem, it is sufficient to show that if for all large enough $N$, the system $\hat{G}^N$ starts with all-empty initial state and routes tasks under the JIQ policy, then for any $T>0$, the process $\hat{x}^N$ converges weakly to the deterministic process $\Bar{X}$ uniformly on $[0,T]$,
where $\Bar{X}(t)=(\Bar{X}_{m,k}(t),m\in[M],k\in[K])$,$t\in[0,T]$ and each $\Bar{X}_{m,k}(t)$ satisfies the following differential equation: 
\begin{equation}\label{eq:proof-1-1}
    \frac{d \Bar{X}_{m,k}(t)}{dt}=\lambda^{\mathbf{p}}_{m,k}-\mu_k \Bar{X}_{m,k}(t).
\end{equation}
Moreover, for each $N\geq 1$, $\hat{G}^N_{m}$, $m\in[M]$ are mutually independent which implies that $\hat{x}^N_m(t)=(\hat{x}^N_{m,k}(t),k\in[K])$, $m\in[M]$ are mutually independent. Hence, we can consider the sequences $\{\hat{G}^N_{m}\}_{N\geq 1}$, $m\in[M]$, separately. Also, by Proposition~\ref{prop:fluid-multiclass},  for each $m\in[M]$, $\{\hat{x}^N_m(t)=(\hat{x}_{m,k}(t),k\in[K]),t\in[0,T]\}$ converges weakly to $\{\Bar{X}_m(t)=(\Bar{X}_{m,k}(t),k\in[K]),t\in[0,T]\}$ where $\Bar{X}_{m,k}$ is defined as \eqref{eq:proof-1-1}. Due to the independence of $\{\hat{G}^N_m\}_{N\geq 1}$, $m\in[M]$, the desired convergence of $x^N$ result holds.
\end{proof}

\section{Analysis of General Systems}\label{sec:proof-general-systems}
In this section, we prove Theorems~\ref{thm:general-JIQ} and~\ref{thm:f-zero-queue} by stochastic coupling arguments.
As discussed in Section~\ref{sec:general-case}, if the $f$-sequence is in the $(\mathbf{w},\mathbf{v},\mathbf{p})$-subcritical regime, we can construct a system $G'^N$ to lower bound the performance of the system $G^N$. 
As constructed in Section~\ref{sec:general-case}, the service rates of the sequence $\{G'^N\}_N$ are determined by the stepwise function $f'(\cdot,\cdot)$ defined by \eqref{eq:f'}. Hence, the sequence $\{G'^N\}_N$ is the special case discussed in Section~\ref{sec:stepwise-f}. 
We start by proving  Theorem~\ref{thm:general-JIQ}, in which we primarily compare the number of busy servers between two constructed subsystems $\Tilde{G}^N\subseteq G^N$ and $\Tilde{G}'^N\subseteq G'^N$ (see Claim~\ref{claim:monotonicity} below).

\begin{proof}[Proof of Theorem~\ref{thm:general-JIQ}]
Recall that by Theorem~\ref{thm:fluid-limit-2}, we have the following result for the $f'$-sequence $\{G'^N\}_N$. 
For each $G'^N$, via ICRD, we can construct a subsystem $\Tilde{G}'^N$ as a union of $H$ separate dispatcher-independent systems $\{\Tilde{G}'^N_h\}_{h\in[H]}$, where for each $h\in[H]$, $\Tilde{G}'^N_h$, consists of dispatchers $\Tilde{\mathcal{W}}'^N_h$ and servers $\Tilde{\mathcal{V}}'^N_h\coloneqq\cup_{m\in[M]}\Tilde{\mathcal{V}}'^N_{h,m}$. 
Now, let $X'^N_{h,m,k}(t)$ be the number of servers in $\Tilde{\mathcal{V}}'^N_{h,m}$ with queue length at least $k\in\N_0$ at time $t\geq 0$. 
Let $\Tilde{X}'^N_{h,m,k}(t)=\frac{X'^N_{h,m,k}(t)}{N}$ be the scaled quantity. 
Theorem~\ref{thm:fluid-limit-2} shows that  $\Tilde{X}'^N(\infty)\dto\Tilde{x}'^*$, where $\Tilde{x}'^*=(\Tilde{x}'^*_{h,m,k},h\in[H],m\in[M],k\in\N_0)$ and 
\begin{equation}\label{eq:tilde-G'-steady}
    \Tilde{x}'^*_{h,m,1}=\frac{\lambda_hp_{h,m}}{\mu_{h,m}}, \quad \Tilde{x}'^*_{h,m,k}=0, \quad \forall h\in[H], m\in[M],k\geq 2.
\end{equation}
Given $G^N$, we can construct a subsystem $\Tilde{G}^N$ which is a union of $H$ separate systems $\{\Tilde{G}^N_h\}_{h\in[H]}$. 
Also, $\Tilde{G}^N_h$ contains the same dispatcher set and server set as $\Tilde{G}'^N_h$, $h\in[H]$. For $h\in[H]$ and $m\in[M]$, let $X^N_{h,m,k}(t)$ be the number of servers in $\Tilde{\mathcal{V}}^N_{h,m}$ with queue length at least $k\in\N_0$ at time $t\geq 0$. Let $\Tilde{X}^N_{h,m,k}(t)=\frac{X'^N_{h,m,k}(t)}{N}$. 
Since for all $(x,y)\in[0,1)^2$ $f'(x,y)\leq f(x,y)$, it is intuitive that $\Tilde{X}^N_{h,m,k}(\infty)\leq \Tilde{X}'^N_{h,m,k}(\infty)$ for all $(h,m,k)\in[H]\times[M]\times\N_0$, which is shown in the next claim.
\begin{claim}\label{claim:monotonicity}
Fix any $N\in\N_0$ and $h\in[H]$. Consider two processes $(\Tilde{X}^N_{h,m,k}(\cdot), m\in[M],k\in\N_0)$ and $(\Tilde{X}'^N_{h,m,k}(\cdot), m\in[M],k\in\N_0)$. 
%Assume that the initial states of these two process hold the following inequality:
%$$\Tilde{X}^N_{h,m,k}(0)\leq \Tilde{X}'^N_{h,m,k}(0), \quad \forall m\in[M],k\in\N_0.$$
Then the processes can be constructed on a common probability space so that 
\begin{equation}\label{eq:5.1-ineq}
\Tilde{X}^N_{h,m,k}(t)\leq \Tilde{X}'^N_{h,m,k}(t), \quad \forall m\in[M],k\in\N_0,t\geq 0    
\end{equation}
holds almost surely, given that the inequality holds at time $t=0$. Consequently, 
\begin{equation*}
    \Tilde{X}^N_{h,m,k}(\infty)\leq \Tilde{X}'^N_{h,m,k}(\infty), \quad \forall m\in[M],k\in\N_0,t\geq 0.
\end{equation*}
\end{claim}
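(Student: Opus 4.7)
The plan is to construct an explicit pathwise coupling of the two processes on a common probability space under which the per-server inequality $Z^N_j(t)\le Z'^N_j(t)$ is preserved for every $j\in\bigcup_{m\in[M]}\Tilde{\mathcal{V}}^N_{h,m}$ and every $t\ge 0$. Summing $\mathds{1}\{Z^N_j(t)\ge k\}$ over $j\in\Tilde{\mathcal{V}}^N_{h,m}$ then immediately yields~\eqref{eq:5.1-ineq}. Since $\Tilde{G}'^N_h$ is exchangeable within each block $\Tilde{\mathcal{V}}^N_{h,m}$ (its service rates there are the constant $\mu_{h,m}$), the hypothesis $\Tilde{X}^N_{h,m,k}(0)\le \Tilde{X}'^N_{h,m,k}(0)$ for all $(m,k)$ can be upgraded, without changing the distribution of $\Tilde{G}'^N_h$, to the per-server initial inequality $Z^N_j(0)\le Z'^N_j(0)$ by relabeling the servers of $\Tilde{G}'^N_h$ inside each block. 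For the stationary conclusion, one may simply start both systems from the all-empty state and extend the inequality to $t=\infty$ using ergodicity of $\Tilde{X}'^N_h(\cdot)$ (Theorem~\ref{thm:fluid-limit-2}) and the stability of $\Tilde{G}^N_h$ inherited through the coupling.

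I would then build the coupling by forward induction on event times. Arrivals at each dispatcher $i\in\mathcal{W}^N_h$ are driven by a single common Poisson process shared across both systems. For services, whenever a server $j\in\Tilde{\mathcal{V}}^N_{h,m}$ is busy in both systems with in-service tasks of types $i$ and $i'$, respectively, I would split its service clock as the minimum of two independent exponentials: one at rate $\mu_{h,m}$ which triggers a simultaneous departure in both systems, and one at rate $f(\phi_1(i),\phi_2(j))-\mu_{h,m}\ge 0$ which triggers an extra departure only in $\Tilde{G}^N_h$; when only $\Tilde{G}'^N_h$ is busy at $j$, it runs its own $\mathrm{Exp}(\mu_{h,m})$ clock. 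This standard thinning leverages the key bound $f'\le f$ and ensures that every departure in $\Tilde{G}'^N_h$ is shadowed by a departure in $\Tilde{G}^N_h$, so per-server domination is preserved at all service events.

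The delicate step, and the main obstacle, is the JIQ routing coupling. Let $I^N$ and $I'^N$ denote the idle-compatible server sets for the arriving task in $\Tilde{G}^N_h$ and $\Tilde{G}'^N_h$ respectively; the induction hypothesis forces $I'^N\subseteq I^N$. The proposal is: first draw $j^*$ uniformly from $I^N$ for $\Tilde{G}^N_h$; if $j^*\in I'^N$, reuse $j^*$ as the target in $\Tilde{G}'^N_h$; if $j^*\in I^N\setminus I'^N$ and $I'^N\ne\emptyset$, draw an independent uniform $j'\in I'^N$ for $\Tilde{G}'^N_h$; if $I'^N=\emptyset$ but $I^N\ne\emptyset$, let $\Tilde{G}'^N_h$ route to an independently chosen uniform server in $\bigcup_{m\in[M]}\Tilde{\mathcal{V}}^N_{h,m}$; and if both are empty, route to a common uniformly chosen server. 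A short case check then verifies both that each marginal draw matches the genuine JIQ rule and that, in every scenario, either the two targets coincide (yielding equal unit increments on a common server) or the target chosen in $\Tilde{G}^N_h$ was already nonidle in $\Tilde{G}'^N_h$ to begin with, so $Z^N_j\le Z'^N_j$ is preserved across the arrival. Reconciling the two JIQ draws while simultaneously preserving each marginal distribution and the coupled monotonicity is the truly nontrivial ingredient; the arrival synchronization and the service thinning (which benefit from the uniform block rate $\mu_{h,m}$ in $\Tilde{G}'^N_h$) are comparatively routine once this routing rule is in place.
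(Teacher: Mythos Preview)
Your proposal is correct and is essentially the same coupling the paper uses: per-server domination $Z^N_j\le Z'^N_j$ obtained by (i) exploiting exchangeability of $\Tilde{G}'^N_h$ within each $\Tilde{\mathcal{V}}^N_{h,m}$ to align the initial states, (ii) synchronizing arrival epochs and handling the JIQ target via the same \emph{reuse-if-idle-else-independent} rule (the paper splits it into the same three/four cases), and (iii) thinning departures using $\mu_{h,m}\le f(\phi_1(i),\phi_2(j))$, which is exactly equivalent to your min-of-two-exponentials representation. The case check you outline matches the paper's verification.
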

\begin{claimproof}
Fix any $h\in[H]$. Let $Z^N_j(t)$ (resp. $Z'^N_j$) be the queue length of server $j$ in system $G^N_h$ (resp. $G'^N_h$).
Note that for each $m\in[M]$, servers in $\mathcal{V}'^N_{h,m}$ are exchangeable since $\Tilde{u}'^N_{i,j}=\mu_{h,m}$ for all $(i,j)\in\Tilde{\mathcal{W}}'^N_h\times\Tilde{\mathcal{V}}'^N_{h,m}$. Hence, when~\eqref{eq:5.1-ineq} holds, WLOG, we can assume that for all $j\in\mathcal{V}^N_h$,  $Z^N_j(t)\leq Z'^N_j(t)$. Hence, it is sufficient to show that for all $j\in\mathcal{V}^N_h$, $t\geq 0$,  $Z^N_j(t)\leq Z'^N_j(t)$ if $Z^N_j(0)\leq Z'^N_j(0)$.
We will use induction on event times to prove it.
The two systems $\Tilde{G}^N_h$ and $\Tilde{G}'^N_h$ are coupled as follows:
\vspace{.1cm}

\noindent
\textit{Arrival}. Since both systems $\Tilde{G}^N_h$ and $\Tilde{G}'^N_h$ have the same set of dispatchers, 
we synchronize the arrival epochs at each dispatcher. 
%We need to show that $\Tilde{X}^N_{h,m,k}(t)\leq \Tilde{X}'^N_{h,m,k}(t), \forall m\in[M],k\in\N_0$, still holds after any arrival event. 
Now suppose that for all $j\in\mathcal{V}^N_h$, $t\geq 0$,  $Z^N_j(t-)\leq Z'^N_j(t-)$ holds before an arrival at time $t$. 
We will show that the inequality holds after the arrival is routed to a server. 
Based on the states of the systems $\Tilde{G}^N_h$ and $\Tilde{G}'^N_h$, we consider three cases below:
\begin{itemize}
    \item Case 1: All servers in both systems are busy. In $\Tilde{G}^N_h$, we can select a server $j^{*}$ uniformly at random from $\Tilde{\mathcal{V}}^N_h$ and assign the arrival to server $j^{*}$. Since $\Tilde{\mathcal{V}}^N_h=\Tilde{\mathcal{V}}'^N_h$, we can assign the arrival in $\Tilde{G}'^N_h$ to server $j^{*}$ as well.
    \item Case 2: The system $\Tilde{G}^N_h$ has idle servers but all servers in $\Tilde{G}'^N_h$ is busy. Two systems will independently assign tasks under JIQ policy. 
    The system $\Tilde{G}^N_h$ assigns the task to an idle server $j_1$, so $Z^N_{j_1}(t)=Z^N_{j_1}(t-)+1\leq Z'^N_{j_1}(t-)=Z'^N_{j_1}(t)$. For the system $\Tilde{G}'^N_h$ routes the task to a busy server $j_2$, so $Z^N_{j_2}(t)=Z^N_{j_2}(t-)\leq Z'^N_{j_2}(t-)+1=Z'^N_{j_2}(t)$.
    \item Case 3: Both systems have idle servers. Clearly, the set of idle servers in $\Tilde{G}'^N_h$ is a subset of that in $\Tilde{G}^N_h$. Hence, we can select a server $j^*$ uniformly at random from the set of idle servers in $\Tilde{G}^N_h$ and assign the arrival to server $j^*$. In the system $\Tilde{G}'^N_h$, if server $j^*$ is idle, then we assign the arrival to it as well; otherwise, we will select a server $j'$ uniformly at random from the set of idle servers in $\Tilde{G}'^N_h$ and assign the arrival to server $j'$, implying $Z^N_{j^*}(t)=1\leq Z'^N_{j^*}(t-)=Z'^N_{j^*}(t)$ and $Z^N_{j'}(t)=0<1=Z'^N_{j'}(t)$.
\end{itemize}
Therefore, for all $j\in\mathcal{V}^N_h$,  $Z^N_j(t)\leq Z'^N_j(t)$ holds for all three cases.
\vspace{.2cm}

\noindent
\textit{Departure}. Suppose that for all $j\in\mathcal{V}^N_h$,  $Z^N_j(t-)\leq Z'^N_j(t-)$ holds before a departure clock rings at time $t>0$. By the definition of function $f'(\cdot,\cdot)$, we have that $\Tilde{u}^N_{i,j}\geq \mu_{h,m}$ for all $(i,j)\in \Tilde{\mathcal{W}}^N_h\times\Tilde{\mathcal{V}}^N_{h,m}$. We couple the departure clock of each server $j$ in systems $\Tilde{G}^N_h$ and $\Tilde{G}'^N_h$ in the following way:
\begin{itemize}
    \item Case 1: both server $j$ in systems $\Tilde{G}^N_h$ and $\Tilde{G}'^N_h$ are busy. When the server $j\in \Tilde{G}^N_h$ completes a task of type $i\in\Tilde{\mathcal{W}}^N_h$, then there is server in $\Tilde{G}'^N_h$ finishing a task with probability $\frac{\mu_{h,m}}{\Tilde{u}^N_{i,j}}\leq 1$.
    \item Case 2: the server $j$ in system $\Tilde{G}^N$ is idle. The departure clocks of both server $j$ in systems $\Tilde{G}^N$ and $\Tilde{G}'^N$ ring independently.
\end{itemize}
Note this way that after any departure epoch, for all $j\in\mathcal{V}^N_h$,  $Z^N_j(t)\leq Z'^N_j(t)$ still holds.
\end{claimproof}
\noindent
By Claim~\ref{claim:monotonicity} and \eqref{eq:tilde-G'-steady}, for each $h\in[H]$, $\sum_{m\in[M]}\Tilde{X}^N_{h,m,1}(\infty)\leq\sum_{m\in[M]}\Tilde{X}^N_{h,m,1}(\infty)\xrightarrow{N\rightarrow\infty}\sum_{m\in[M]}\frac{\lambda_hp_{h,m}}{\mu_{h,m}}<1$, which implies that under JIQ, the steady probability of assigning tasks to idle servers approaches to 1 as $N\rightarrow\infty$.  
\end{proof}

Before proceeding to prove Theorem~\ref{thm:f-zero-queue}, we need to define a useful metric for comparing the performance of $G^N$ and $G'^N$. 
Recall the weighted queue length $Q^N_j$ defined in~\eqref{defn:weighted-queue-length}.
Consider a subinterval $E\subseteq[0,1)$. 
Denote $\dom(E,N):=\{j\in\mathcal{V}^N:\phi_2(j)\in E\}$. 
Let $$\mathcal{X}_t(E,G^N)=\big(Q^N_j(t):j\in\dom(E,N)\big),$$ where $Q^N_j(t)$ is the weighted queue length of server $j\in\dom(E,N)$ in the system $G^N$ at time $t$. 
Let $$\mathcal{X}^o_t(E,G^N)=\big(Q^N_{(n)}(t),1\leq n\leq |\dom(E,N)|\big)$$ be the ordered sequence of $\mathcal{X}_t(E,G^N)$ from the minimum to the maximum. 
Similarly, for the system $G'^N$, we define $\mathcal{X}_t(E,G'^N)=(Q'^N_j(t):j\in\dom(E,N))$ and $\mathcal{X}^o_t(E,G'^N)=(Q'^N_{(n)}(t),1\leq n\leq |\dom(E,N)|)$. 

Now suppose the queue length processes in $G^N$ and $G'^N$ are defined in a common probability space.
We say that \textit{Property I} holds for the interval $E$ at time $t$ if 
\begin{enumerate}[\normalfont(i)]
\item $\mathcal{X}^o_t(E,G^N)\leq \mathcal{X}^o_t(E,G'^N)$ componentwise;
\item $Q^N_{(1)}(t)=Q'^N_{(1)}(t)=0$;
\item $\forall j\in\dom(E,N)$, the queue length of server $j$ in the system $G^N$ (resp. $G'^N$) is less than or equal to 1; 
\end{enumerate}
When \textit{Property I} holds for intervals $[v_{m-1},v_m)$ for all $m\in[M]$ at time $t_0$, 
we couple the systems $G^N$ and $G'^N$ in the following way: 
\vspace{.2cm}

\noindent
\textit{Arrival}. Since both the systems $G^N$ and $G'^N$ have the same dispatcher set and route tasks under the $\mathbf{p}$-based JIQ, 
we can synchronize their arrival processes: 
if a task arrives at dispatcher $i\in\mathcal{W}^N_h$, then it selects a target server block $\mathcal{V}^N_{m'}$ with the discrete distribution $(p_{h,m},m\in[M])$ and assigns the new task to one of idle servers (say, $j\in\mathcal{V}^N_{m'}$) in $\mathcal{V}^N_m$ uniformly at random.
By synchronization of arrival epochs, for the system $G'^N$, a task arrives at dispatcher $i\in \mathcal{W}^N_h$ and is assigned to one of the idle servers (say, $j'$) in $\mathcal{V}^N_{m'}$ as well. 
Note that the servers $j$ and $j'$ can be different but both servers are idle.
\vspace{.2cm}

\noindent
\textit{Departure}. Consider $[v_{m-1},v_m)$ for any fixed $m\in[M]$. We synchronize the departure epochs of the $n$-th ordered servers in $\dom([v_{m-1},v_m),N)$ of $G^N$ and $G'^N$ in the following way. 
\begin{itemize}
    \item Case 1: Both the $n$-th ordered servers are busy and the departure clock  for both will ring at rate $1/Q^N_{(n)}(t_0)$. 
    Also, when the clock rings, the $n$-th ordered server in $G^N$  finishes its task but the $j$th ordered server in $G'^N$ will finish its task with probability $Q^N_{(n)}(t_0)/Q'^N_{(n)}(t_0)$ independent of everything else.

    \item Case 2: Only the $n$-th ordered server in $\dom([v_{m-1},v_m),N)$ of the system $G'^N$ is busy and the departure clock for $n$-th servers rings at rate $1/Q'^N_{(n)}(t_0)$.  
    When the clock rings, the $n$-th ordered server in $G'^N$ will finish its task.

    \item Case 3: Both the $n$-th ordered servers are idle. There is no departure from these servers. 
\end{itemize}
Note that due to \textit{Property I}, there is no case that only $n$-th the ordered server in $\dom([v_{m-1},v_m),N)$ of system $G^N$ is busy.
\vspace{.2cm}

\begin{lemma}\label{lem:GN-G'N}
Assume that \textit{Property I} holds for all intervals $[v_{m-1},v_m)$, $m\in[M]$ at time $t_0$ and 
that the systems $G^N$ and $G'^N$ are coupled as above. 
Let $t_1$ be the time for the next event time. Then,
%\footnote{Instead, don't you need to show that \textit{Property I} holds at time $t_1$ (i.e., all three properties)? ZZ: Just `Property I' holds at $t_0$ does not implies Property I holds at time $t_1$. `Property I'+ $\geq 2$ idle servers at time $t_0$ implies `Property I' holds at time $t_1$. '$\geq 2$ idle servers' must hold in our scenario which is shown in the proof.} 
\begin{equation}\label{eq:order-weighted-queue}
    \mathcal{X}^o_{t_1}([v_{m-1},v_m),G^N)\leq \mathcal{X}^o_{t_1}([v_{m-1},v_m),G'^N),\quad \forall m\in[M].
\end{equation}
\end{lemma}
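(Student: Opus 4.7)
The plan is to fix any $m\in[M]$ and case-analyze the event at time $t_1$. Since events in a continuous-time Markov chain occur one at a time, exactly one of the following happens: either a single arrival or the firing of one synchronized departure clock. If the event does not act on any server in $\dom([v_{m-1},v_m),N)$ (for instance, an arrival whose target block is $\mathcal{V}^N_{m'}$ with $m'\neq m$, or a departure clock belonging to another interval), then both ordered sequences restricted to this interval are unchanged and~\eqref{eq:order-weighted-queue} is immediate, so I may restrict attention to events acting on block $m$. Throughout, I would use the equivalent characterization of componentwise order-statistic domination: for two non-decreasing sequences of the same length, $\mathcal{X}^o\leq\mathcal{X}'^o$ componentwise iff the level-set count $C_X(t):=|\{j:Q^N_j\leq t\}|$ satisfies $C_X(t)\geq C_Y(t)$ for every $t\in\R$. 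This reformulation bypasses all re-sorting bookkeeping.

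\textbf{Arrival step.} For a task arriving at dispatcher $i\in\mathcal{W}^N_h$ and selecting target block $\mathcal{V}^N_m$ under the $\mathbf{p}$-based JIQ, \textit{Property I}(ii) guarantees an idle server in $\mathcal{V}^N_m$ in both systems, so the coupling routes the task to idle servers $j\in\mathcal{V}^N_m$ of $G^N$ and $j'\in\mathcal{V}^N_m$ of $G'^N$ (possibly distinct). Their weighted queue lengths jump from $0$ to $a:=1/\mu^N_{i,j}$ and $b:=1/\mu'^N_{i,j'}$ respectively. By~\eqref{eq:f'}, $\mu'^N_{i,j'}=\mu^*_{h,m}\leq\mu^N_{i,j}$, hence $a\leq b$. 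The level-set counts update as $C'_X(t)=C_X(t)-1+\mathds{1}_{\{a\leq t\}}$ and $C'_Y(t)=C_Y(t)-1+\mathds{1}_{\{b\leq t\}}$; since $\mathds{1}_{\{a\leq t\}}\geq\mathds{1}_{\{b\leq t\}}$ for every $t$ and $C_X\geq C_Y$ by hypothesis, $C'_X\geq C'_Y$ pointwise.

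\textbf{Departure step.} The ringing clock corresponds to some order position $n$. By \textit{Property I}(i), we are in Case~1 or Case~2 of the coupling (Case~3 produces no event). In every sub-case the only state change is that $Q^N_{(n)}$ and/or $Q'^N_{(n)}$ drop to $0$; here \textit{Property I}(iii) is essential, since a single service completion empties a server whose queue has at most one task. The trivial sub-case is Case~1(b), where only $G^N$'s server finishes: $C_X$ only gains entries, so $C'_X\geq C_X\geq C_Y=C'_Y$. The nontrivial adverse window arises in Case~1(a) and Case~2, where $C'_X(t)-C'_Y(t)=(C_X(t)-C_Y(t))-1$ on $[Q^N_{(n)}(t_0),Q'^N_{(n)}(t_0))$ (with $Q^N_{(n)}(t_0)=0$ in Case~2). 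On precisely this window, the sorted structure supplies the needed slack: $C_X(t)\geq n$ (at least the first $n$ ordered entries of $G^N$ lie in $[0,t]$) and $C_Y(t)\leq n-1$ (fewer than $n$ entries of $G'^N$ lie in $[0,t]$), so $C_X-C_Y\geq 1$ absorbs the $-1$ and $C'_X\geq C'_Y$ is preserved.

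The main obstacle is managing the re-sorting of the order statistics after each event: the identity of the $k$-th smallest server can shift, which makes a direct comparison of $k$-th order statistics cumbersome. The level-set reformulation dissolves this, since every update is a single additive modification $\pm\mathds{1}_{\{\cdot\leq t\}}$ of $C_X$ or $C_Y$, and the required inequality reduces to a pointwise check in $t\in\R$. That check is then resolved by exactly two ingredients — the pointwise bound $\mu^*_{h,m}\leq\mu^N_{i,j}$ coming from the definition of $f'$ in~\eqref{eq:f'} (for arrivals), and the pre-event ordering slack supplied by \textit{Property I}(i) (for departures).
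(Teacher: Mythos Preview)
Your proof is correct and takes a genuinely different route from the paper. The paper argues directly on the ordered sequences: after an arrival it locates the insertion positions $l$ (in $G^N$) and $l'$ (in $G'^N$) of the new values $1/f(\phi(i),\phi(j))$ and $1/\mu_{h,m}$, then splits into the two sub-cases $l<l'$ and $l'\leq l$ and verifies $Q^N_{(n)}(t_1)\leq Q'^N_{(n)}(t_1)$ index by index over five consecutive ranges of $n$; the departure step is handled analogously with three sub-cases and a range-by-range verification. You instead pass to the equivalent level-set formulation $C_X(t)\geq C_Y(t)$ and reduce every event to a pointwise inequality between indicator shifts, which collapses the paper's position-tracking case analysis into a single line for arrivals and a one-inequality slack check ($C_X(t)\geq n>n-1\geq C_Y(t)$ on the adverse window) for departures. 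The paper's approach is more explicit and makes the re-sorting mechanics visible; yours is shorter and avoids the re-indexing bookkeeping entirely, at the cost of invoking the standard (but unstated in the paper) equivalence between order-statistic domination and empirical-CDF domination. One small quibble: your formula $C'_X(t)=C_X(t)-1+\mathds{1}_{\{a\leq t\}}$ is literally correct only for $t\geq 0$; since all weighted queue lengths are nonnegative this is harmless, but it would be cleaner to write the update as $C'_X(t)=C_X(t)-\mathds{1}_{\{0\leq t<a\}}$.
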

\begin{proof}
First, let $t_1$ be an arrival epoch and suppose that dispatcher $i\in\mathcal{W}^N_h$ (resp.~$i\in \mathcal{W}'^N_h$) assigns the new task to a server $j\in\mathcal{V}^N_{m'}$ (resp.~$j'\in\mathcal{V}'^N_{m'}$) in system $G^N$ (resp.~$G'^N$). Since the new arrival does not affect the state of servers in $\mathcal{V}^N\setminus\mathcal{V}^N_{m'}$ (resp.~$\mathcal{V}'^N\setminus\mathcal{V}'^N_{m'}$), then at time $t_1$, $\forall m\in[M]\setminus \{m'\}$, $$\mathcal{X}^o_{t_1}([v_{m-1},v_m),G^N)=\mathcal{X}^o_{t_0}([v_{m-1},v_m),G^N)\leq \mathcal{X}^o_{t_0}([v_{m-1},v_m),G'^N)=\mathcal{X}^o_{t_1}([v_{m-1},v_m),G'^N).$$
    Consider $\mathcal{X}^o_{t_1}([v_{m'-1},v_{m'}),G^N)=(Q^N_{(n)}(t_1),1\leq n\leq |\dom([v_{m'-1},v_{m'}),N)|)$. Since the new task is assigned to the server $j$ which is idle at time $t_0$, we have $Q^N_j(t_1)=1/f(\phi(i),\phi(j))$. Also, $Q^N_{\hat{j}}(t_1)=Q^N_{\hat{j}}(t_0)$ for all ${\hat{j}}\in\dom([v_{m-1},v_m),N)\setminus\{j\}$. Hence, there exists $l\in[|\dom([v_{m'-1},v_{m'}),N)|]$ such that
    \begin{enumerate}[\normalfont(i)]
        \item $Q^N_{(l)}(t_1)=\frac{1}{f(\phi(i),\phi(j'))}$,
        \item $Q^N_{(n)}(t_1)=Q^N_{(n+1)}(t_0)$, $\forall n\in[l-1]$,
        \item $Q^N_{(n)}(t_1)=Q^N_{(n)}(t_0)$, $\forall l+1\leq n\leq |\dom([v_{m'-1},v_{m'}),N)|$, if $l\in[|\dom([v_{m'-1},v_{m'}),N)|-1]$,
        \item $Q^N_{(l)}(t_1)<Q^N_{(l+1)}(t_1)$ if $l\in[|\dom([v_{m'-1},v_{m'}),N)|-1]$.
    \end{enumerate}
    Consider $\mathcal{X}^o_{t_1}([v_{m'-1},v_{m'}),G'^N)=(Q'^N_{(n)}(t_1),1\leq n\leq |\dom([v_{m'-1},v_{m'}),N)|)$. Similarly, there exists $l'\in[|\dom([v_{m'-1},v_{m'}),N)|]$ such that 
    \begin{enumerate}[\normalfont(i)]
        \item $Q'^N_{(l')}(t_1)=\frac{1}{\mu_{h,m'}}$,
        \item $Q'^N_{(n)}(t_1)=Q'^N_{(n+1)}(t_0)$, $\forall n\in[l'-1]$,
        \item $Q'^N_{(n)}(t_1)=Q'^N_{(n)}(t_0)$, $\forall l'+1\leq n\leq |\dom([v_{m'-1},v_{m'}),N)|$, if $l'\in[|\dom([v_{m'-1},v_{m'}),N)|-1]$,
        \item $Q'^N_{(l')}(t_1)<Q'^N_{(l'+1)}(t_1)$ if $l'\in[|\dom([v_{m'-1},v_{m'}),N)|-1]$.
    \end{enumerate}
    According to the values of $l$ and $l'$, we discuss the relationship between $ \mathcal{X}^o_{t_1}([v_{m'-1},v_{m'}),G^N)$ and $ \mathcal{X}^o_{t_1}([v_{m'-1},v_{m'}),G'^N)$ in the following three cases.\\
    
    \noindent
    Case 1: $1\leq l< l'\leq  |\dom([v_{m'-1},v_{m'}),N)|$. 
     \begin{enumerate}[\normalfont(i)]
         \item $Q^N_{(n)}(t_1)=Q^N_{(n+1)}(t_0)\leq Q'^N_{(n+1)}(t_0)=Q'^N_{(n)}(t_1)$, $\forall n\in[l-1]$;
         \item $Q^N_{(l)}(t_1)<Q^N_{(l+1)}(t_1)=Q^N_{(l+1)}(t_0)\leq Q'^N_{(l+1)}(t_0)=Q'^N_{(l)}(t_1)$, $n=l$;
         \item $Q^N_{(n)}(t_1)=Q^N_{(n)}(t_0)\leq Q'^N_{(n)}(t_0)\leq Q'^N_{(n+1)}(t_0)=Q'^N_{(n)}(t_1)$, $\forall l+1\leq n\leq l'-1$;
         \item $Q^N_{(l')}(t_1)=Q^N_{(l')}(t_0)\leq Q'^N_{(l')}(t_0)=Q'^N_{(l'-1)}(t_1)\leq Q'^N_{(l')}(t_1)$,  $n=l'$;
         \item $Q^N_{(n)}(t_1)=Q^N_{(n)}(t_0)\leq Q'^N_{(n)}(t_0)=Q'^N_{(n)}(t_1)$, $\forall l'+1\leq n\leq |\dom([v_{m'-1},v_{m'}),N)|$.
     \end{enumerate} 
     Case 2: $1\leq l'\leq l\leq  |\dom([v_{m'-1},v_{m'}),N)|$.
     \begin{enumerate}[\normalfont(i)]
         \item $Q^N_{(n)}(t_1)=Q^N_{(n+1)}(t_0)\leq Q'^N_{(n+1)}(t_0)=Q'^N_{(n)}(t_1)$, $\forall n\in[l'-1]$;
         \item $Q^N_{(n)}(t_1)\leq Q^N_{(l)}(t_1)= 1/f(\phi(i),\phi(j'))\leq 1/\mu_{h,m'}=Q'^N_{(l')}(t_1)\leq  Q'^N_{(n)}(t_1)$, $\forall l'\leq n\leq l'$;
         \item $Q^N_{(n)}(t_1)=Q^N_{(n)}(t_0)\leq Q'^N_{(n)}(t_0)=Q'^N_{(n)}(t_1)$, $\forall l'+1\leq n\leq |\dom([v_{m'-1},v_{m'}),N)|$.
     \end{enumerate} 
     Therefore, \eqref{eq:order-weighted-queue} is preserved at  $t_1$.
    \vspace{3mm}
    
    \noindent
Next, let $t_1$ be a departure epoch and suppose that the clock for $l$-th ordered server in $\dom([v_{m-1},v_m),N)$ rings. 
According to whether there are tasks being finished in system $G^N$ and $G'^N$, we discuss the relationship between $ \mathcal{X}^o_{t_1}([v_{m'-1},v_{m'}),G^N)$ and $ \mathcal{X}^o_{t_1}([v_{m'-1},v_{m'}),G'^N)$ in the following three cases.

\noindent
Case 1: both the $l$-th ordered servers in $\dom([v_{m-1},v_m),N)$ of systems $G^N$ and $G'^N$ finish tasks, and then \begin{enumerate}[\normalfont(i)]
     \item $Q^N_{(1)}(t_1)=Q'^N_{(1)}(t_1)=0$;
     \item $Q^N_{(n)}(t_1)=Q^N_{(n-1)}(t_0)\leq Q'^N_{(n-1)}(t_0)=Q'^N_{(n)}(t_1)$, $\forall 2\leq n\leq l$;
     \item $Q^N_{(n)}(t_1)=Q^N_{(n)}(t_0)\leq Q'^N_{(n)}(t_0)=Q'^N_{(n)}(t_1)$, $\forall l+1\leq n\leq |\dom([v_{m'-1},v_{m'}),N)|$.
 \end{enumerate} 
 Case 2: only the $l$-th ordered servers in  $\dom([v_{m-1},v_m),N)$ of system $G^N$ finishes task, and then
 \begin{enumerate}[\normalfont(i)]
     \item $Q^N_{(1)}(t_1)=Q'^N_{(1)}(t_1)=0$;
     \item $Q^N_{(n)}(t_1)=Q^N_{(n-1)}(t_0)\leq Q'^N_{(n-1)}(t_0)=Q'^N_{(n-1)}(t_1)\leq Q'^N_{(n)}(t_1)$, $\forall 2\leq n\leq l$;
     \item $Q^N_{(n)}(t_1)=Q^N_{(n)}(t_0)\leq Q'^N_{(n)}(t_0)=Q'^N_{(n)}(t_1)$, $\forall l+1\leq n\leq |\dom([v_{m'-1},v_{m'}),N)|$.
 \end{enumerate}
 Case 3: only the $l$-th ordered servers in  $\dom([v_{m-1},v_m),N)$ of system $G'^N$ finishes task, and then
 \begin{enumerate}[\normalfont(i)]
     \item $Q^N_{(1)}(t_1)=Q'^N_{(1)}(t_1)=0$;
     \item $0=Q^N_{(n)}(t_1)\leq Q'^N_{(n-1)}(t_0)=Q'^N_{(n)}(t_1)$, $\forall 2\leq n\leq l$;
     \item $Q^N_{(n)}(t_1)=Q^N_{(n)}(t_0)\leq Q'^N_{(n)}(t_0)=Q'^N_{(n)}(t_1)$, $\forall l+1\leq n\leq |\dom([v_{m'-1},v_{m'}),N)|$.
 \end{enumerate}
Thus, \eqref{eq:order-weighted-queue} is preserved at $t_1$.
\end{proof}

\begin{proof}[Proof of Theorem~\ref{thm:f-zero-queue}]
Fix any $T>0$ and large enough $N$. Consider the system $G^N$ and the system $G'^N$ both with the idle initial state. Clearly, \textit{Property I} holds for all intervals $[v_{m-1},v_m)$, $m\in[M]$ at time $0$ so we can couple the system $G^N$ with the system $G'^N$ as above at time $t_0=0$. Let $t_1>t_0$ be the next event time. 
By Lemma~\ref{lem:GN-G'N}, $\mathcal{X}^o_{t_1}([v_{m-1},v_m),G^N)\leq \mathcal{X}^o_{t_1}([v_{m-1},v_m),G'^N), \forall m\in[M]$. 
Let $\Bar{X}'^N_{m,k}(t)$ be the fraction of servers of type $m$ serving a task at rate $\mu_k$ in the system $G'^N$ at time $t$. 
If at time $t_1$, for all $m\in[M]$, $\sum_{k\in[K]}\Bar{X}'^N_{m,k}(t)\leq  \sum_{k\in[K]}x^{\mathbf{p}}_{m,k}<1$, then \textit{Property I} holds for all intervals $[v_{m-1},v_m)$, $m\in[M]$ at time $t_1$ so we can still couple the system $G^N$ with the system $G'^N$ as above after time $t_1$. 
Hence, we can couple the system $G^N$ with the system $G'^N$ as above for all $t\in[0,T\land \tau^N)$, where $$\tau^N\coloneqq\inf \Big\{t\geq 0: \exists\ m\in[M], \quad \sum_{k\in[K]}\Bar{X}'^N_{m,k}(t)> \sum_{k\in[K]}x^{\mathbf{p}}_{m,k}\Big\}.$$ 
For $t\in[T\land \tau^N,T]$, the system $G^N$ and the system $G'^N$ are assumed to evolve independently. Let $\Bar{X}^N_m(t)$ be the busy servers in $\dom([v_{m-1},v_m),N)$ in system $G^N$, $m\in[M]$.
\begin{equation*}
    \begin{split}
        \lim_{N\rightarrow\infty}\PP(\sup_{t\in[0,T]}\Bar{X}^N(t)\leq \sum_{m,k}x^{\mathbf{p}}_{m,k})\leq & \lim_{N\rightarrow\infty}\PP(\sup_{t\in[0,T]}\Bar{X}^N_m(t)\leq \sum_{k}x^{\mathbf{p}}_{m,k},\quad \forall m\in[M])\\
        =&\lim_{N\rightarrow\infty}\PP(\sup_{t\in[0,T]}\Bar{X}^N_m(t)\leq \sum_{k}x^{\mathbf{p}}_{m,k},\quad \forall m\in[M],\tau^N\geq T)\\
        &\hspace{1cm}+\lim_{N\rightarrow\infty}\PP(\sup_{t\in[0,T]}\Bar{X}^N_m(t)\leq \sum_{k}x^{\mathbf{p}}_{m,k},\quad \forall m\in[M],\tau^N<T)\\
    \end{split}
\end{equation*}
Since for all $t\in[0,T\land \tau^N]$, \textit{Property I} holds for all intervals $[v_{m-1},v_m)$, $m\in[M]$, then 
$$\PP(\sup_{t\in[0,T]}\Bar{X}^N_m(t)\leq \sum_{k}x^{\mathbf{p}}_{m,k},\quad \forall m\in[M],\tau^N\geq T)=\PP(\tau^N\geq T).$$
By Theorem~\ref{thm:fluid-limit-1}, $\lim_{N\rightarrow\infty}\PP(\tau^N\geq T)=1$. Therefore, 
$$\lim_{N\rightarrow\infty}\PP(\sup_{t\in[0,T]}\Bar{X}^N(t)\leq \sum_{m,k}x^{\mathbf{p}}_{m,k})=1$$
\end{proof}

\section{Numerical Results}\label{sec:numerical results}

In this section, we will present the simulation results to support the discussion in Section~\ref{sec:intro} and validate the theoretical results. 
For the simulation, we generate the heterogeneous systems as follows:
\begin{enumerate}[Step 1.]
    \item Decide the size of the system $N$ and set $\xi=1$, where $\xi$ is defined in Assumption~\ref{ass:f-sequence}. 
    \item Sample two vectors $\mathcal{I},\mathcal{J}\in[0,1)^N$ as the membership of dispatchers and servers, respectively.
    \item Decide the arrival rate of tasks at each dispatcher via the arrival rate function $\lambda(\cdot)$, where for all $x\in\mathcal{I}$, $\lambda(x)=5x.$
    \item Decide the service rate for each pair $(x,y)\in\mathcal{I}\times\mathcal{J}$ via the service rate function $f(\cdot,\cdot)$, where $f(\cdot,\cdot)$ is a stepwise function on $[0,1)^2$ represented as Figure~\ref{fig:g-function}.

    \begin{figure}[!htb]
    \minipage{0.5\textwidth}\centering
  \includegraphics[width=0.9\linewidth]{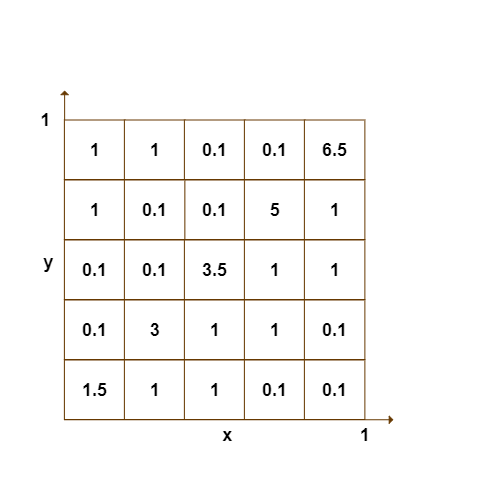}
  \caption{Function $f(x,y)$}\label{fig:g-function}
\endminipage\hfill
\minipage{0.5\textwidth}\centering
  \includegraphics[width=0.9\linewidth]{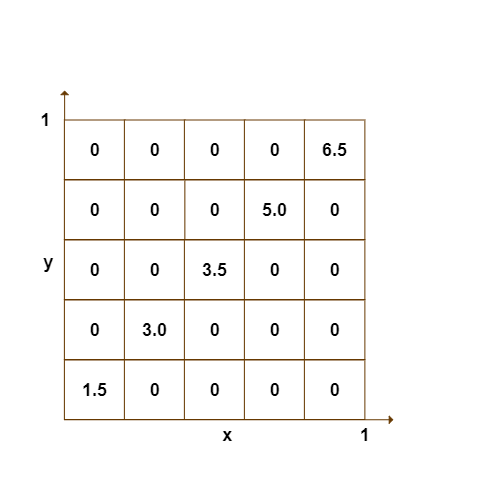}
  \caption{Function $f'(x,y)$}\label{fig:g'-function}
\endminipage\hfill
\end{figure}
    
\end{enumerate}
For both ICRD and SPD approaches, we set the partitions \begin{equation*}
    \begin{split}
        (\mathbf{w},\mathbf{v})=(&w_0=0<w_1=0.2<w_2=0.4<w_3=0.6<w_4=0.8<w_5=1, \\&v_0=0<v_1=0.2<v_2=0.4<v_3=0.6<v_4=0.8<v_5=1)
    \end{split}
\end{equation*} and the matrix $\mathbf{p}=\big(p_{h,m},h\in[5],m\in[5]\big)=\begin{pmatrix}
        1 & 0 & 0 & 0 & 0\\     
        0 & 1 & 0 & 0 & 0\\  
        0 & 0 & 1 & 0 & 0\\   
        0 & 0 & 0 & 1 & 0\\     
        0 & 0 & 0 & 0 & 1
    \end{pmatrix}$. It is easy to check that the partitions $(\mathbf{w},\mathbf{v})$ and the matrix $\mathbf{p}$ satisfy \eqref{eq:P-subcritical-infinite}. Interestingly, with such $(\mathbf{w},\mathbf{v},\mathbf{p})$, the ICRD and SPD approaches are equivalent, since SPD assigns tasks of different classes to servers of different types with probability 1, that is, one type servers are reserved for only one class tasks as ICRD does. For ICRD, we need to construct the subsystem by reserving the capacity of each server, which is equivalent to deciding the service rate of each pair $(x,y)\in\mathcal{I}\times\mathcal{J}$ via function $f'(\cdot,\cdot)$ (Figure~\ref{fig:g'-function}). 
    %Hence, we only simulate ICRD here. Given an $N$-th system $G^N=(\mathcal{W}^N,\mathcal{V}^N,\boldsymbol\lambda^N,\mathcal{U}^N)$, the key step of ICRD is to construct a new service rate matrix $\Tilde{\mathcal{U}}^N$. The elements of $\Tilde{\mathcal{U}}^N$ is determined by the function $\Tilde{f}$ defined as follows:\footnote{Same comment as the previous one. ZZ: I modified it.} for $(x,y)\in\mathcal{I}\times\mathcal{J}$,
   % \begin{equation*}
    %    \Tilde{f}(x,y)=f(x,y)\sum_{(h,m)\in[5]\times[5]}p_{h,m}\mathds{1}_{(0.2(h-1)\leq x\leq 0.2h,0.2(m-1)\leq x\leq 0.2m)}
    %\end{equation*}
The simulation includes three parts: (i) comparing the performance of ICRD (SPD) with other routing policies, (ii) validating the asymptotic zero-queueing property of ICRD (SPD), and (iii) numerically verifying the convergence of steady states.
\vspace{.2cm}

\noindent
\textbf{Compare ICRD with other policies.} Besides ICRD (SPD), we simulate JIQ, JFIQ, JFSQ and \textsc{MinDrift} (Definition~\ref{def:MinDrift}) policies in a heterogeneous system with $N=50$.
% \begin{figure}
% \begin{floatrow}
% \ffigbox{%
%   \includegraphics[scale=0.5]{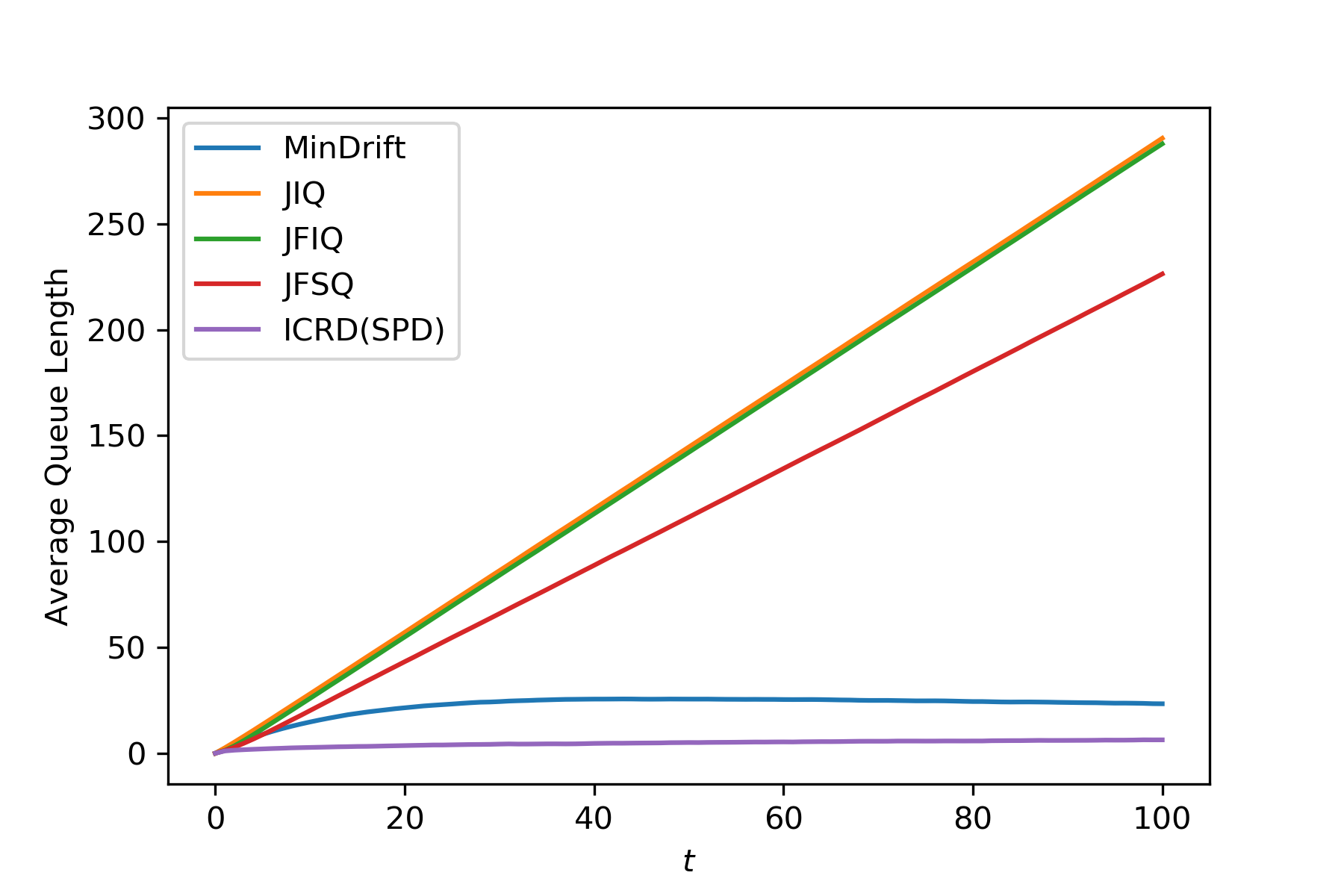}%
% }{%
%   \caption{Performance of various policies}%
% }
% \capbtabbox{%
% \begin{tabular}{ |c|c| } 
%  \hline
%  JIQ & 0.3991\\
%  \hline
%  JFIQ & 0.3963\\ 
%  \hline
%  JFSQ & 0.0990\\
%  \hline
%  \textsc{MinDrift} & 0.0039\\
%  \hline
%  ICRD/SPD & $\approx 0$ \\ 
%  \hline
% \end{tabular}%
% %   \begin{tabular}{ |c|c|c|c|c| } 
% %  \hline
% %  JIQ & JFIQ & JFSQ & \textsc{MinDrift} & ICRD/SPD \\ 
% %  \hline
% %   0.3991 & 0.3963 & 0.0990 & 0.0039 & $\approx 0$ \\ 
% %  \hline
% % \end{tabular}
% }{%
%   \caption{Probability of assigning tasks to ``bad'' servers}
% \label{table:prob-bad-server}
% }
% \end{floatrow}
% \end{figure}
\begin{figure}[h]
    \centering
    \includegraphics[scale=0.4]{}
    \caption{Performance of various policies}
    \label{fig:my_label}
\end{figure}
The simulation results shows that under JIQ, JFIQ and JFSQ, the average queue length increases linearly, leading to the instability of the system. The main reason is that these three policies assign tasks to ``bad'' servers, which will process at rate less than 0.5, with high probability; see details in Table~\ref{table:prob-bad-server}.
\begin{table}[h]
\centering
\begin{tabular}{ |c|c|c|c|c| } 
 \hline
 JIQ & JFIQ & JFSQ & \textsc{MinDrift} & ICRD (SPD) \\ 
 \hline
  0.3991 & 0.3963 & 0.0990 & 0.0039 & 0 \\ 
 \hline
\end{tabular}
\caption{Probability of assigning tasks to ``bad'' servers}
\label{table:prob-bad-server}
\end{table}

It is interesting to see that the performance of ICRD (SPD) is even better than that of \textsc{MinDrift}. This observation is reasonable since based on the definition of \textsc{MinDrift}, it does not differentiate idle servers based on processing rate. Also, in Table~\ref{table:prob-bad-server}, there is a small portion of tasks assigning to ``bad'' servers under \textsc{MinDrift}. ICRD (SPD) will never assign tasks to ``bad'' servers. 
\vspace{.2cm}

\noindent
\textbf{Asymptotic zero-queueing property of ICRD (SPD).}
With the current setting, the system $G^N$ consists of $5$ disjoint systems $G^N_h$. For each $G^N_h$, it has dispatchers $\mathcal{W}^N_h=\{x\in\mathcal{I}: 0.2(h-1)\leq x<0.2h\}$ and servers $\mathcal{V}^N_h=\{y\in\mathcal{J}: 0.2(h-1)\leq x<0.2h\}$. We simulate ICRD (SPD) and record the fraction of busy servers $\Tilde{X}^N_{h}$ in each $\mathcal{V}^N_h$ with $N=100,500,1000,5000$. The simulated sample path starting with the idle initial state is shown in Figure~\ref{fig:sample path}. From the simulation results, we verify that ICRD (SPD) achieves asymptotic zero-queueing, since all $\Tilde{X}^N_h$, $h=1,2,3,4,5$, with large $N$, are less than 1.  
\begin{figure}[!htb]
     \centering
     \begin{subfigure}[b]{0.3\textwidth}
         \centering
         \includegraphics[width=\textwidth]{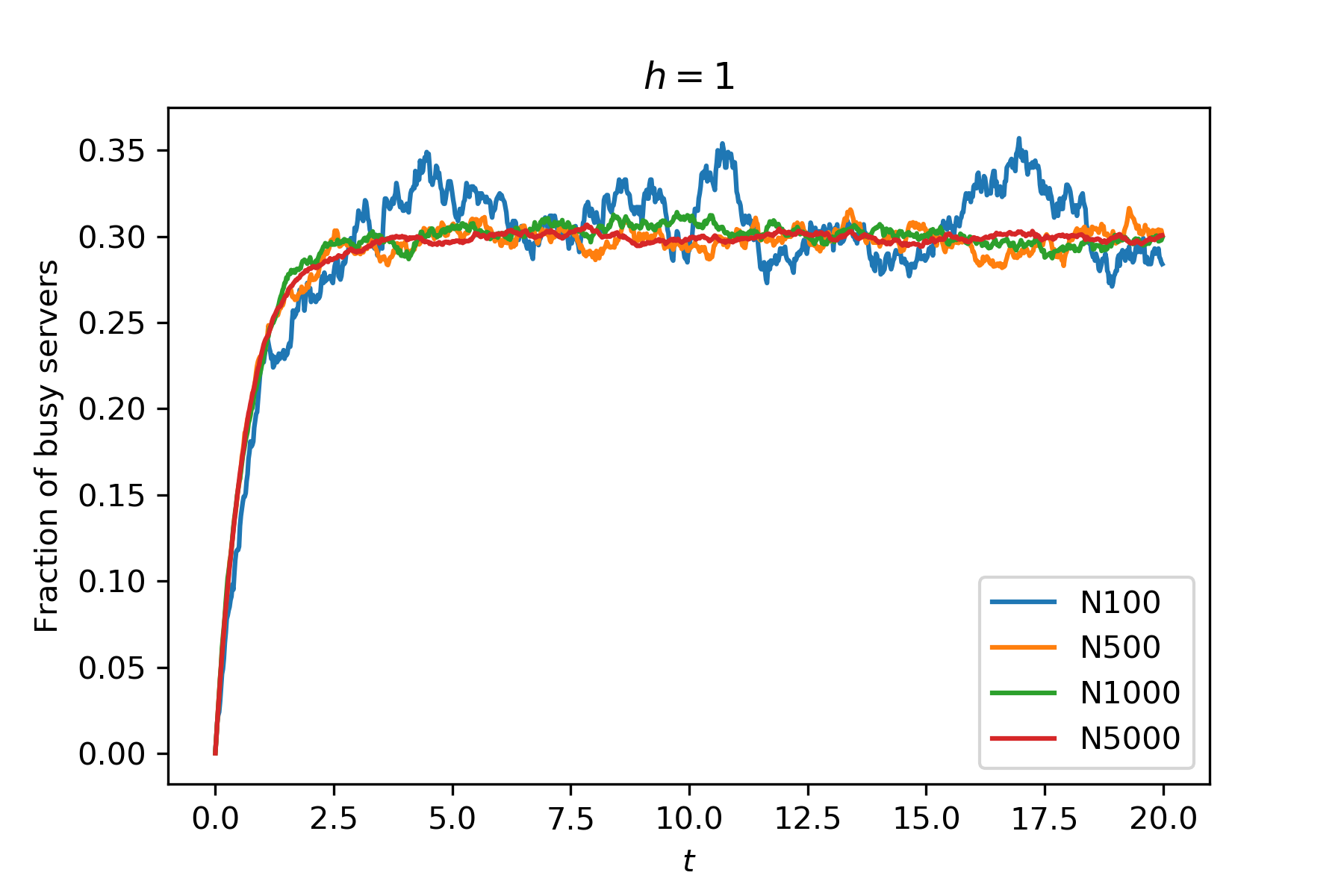}
     \end{subfigure}
     \hfill
     \begin{subfigure}[b]{0.3\textwidth}
         \centering
         \includegraphics[width=\textwidth]{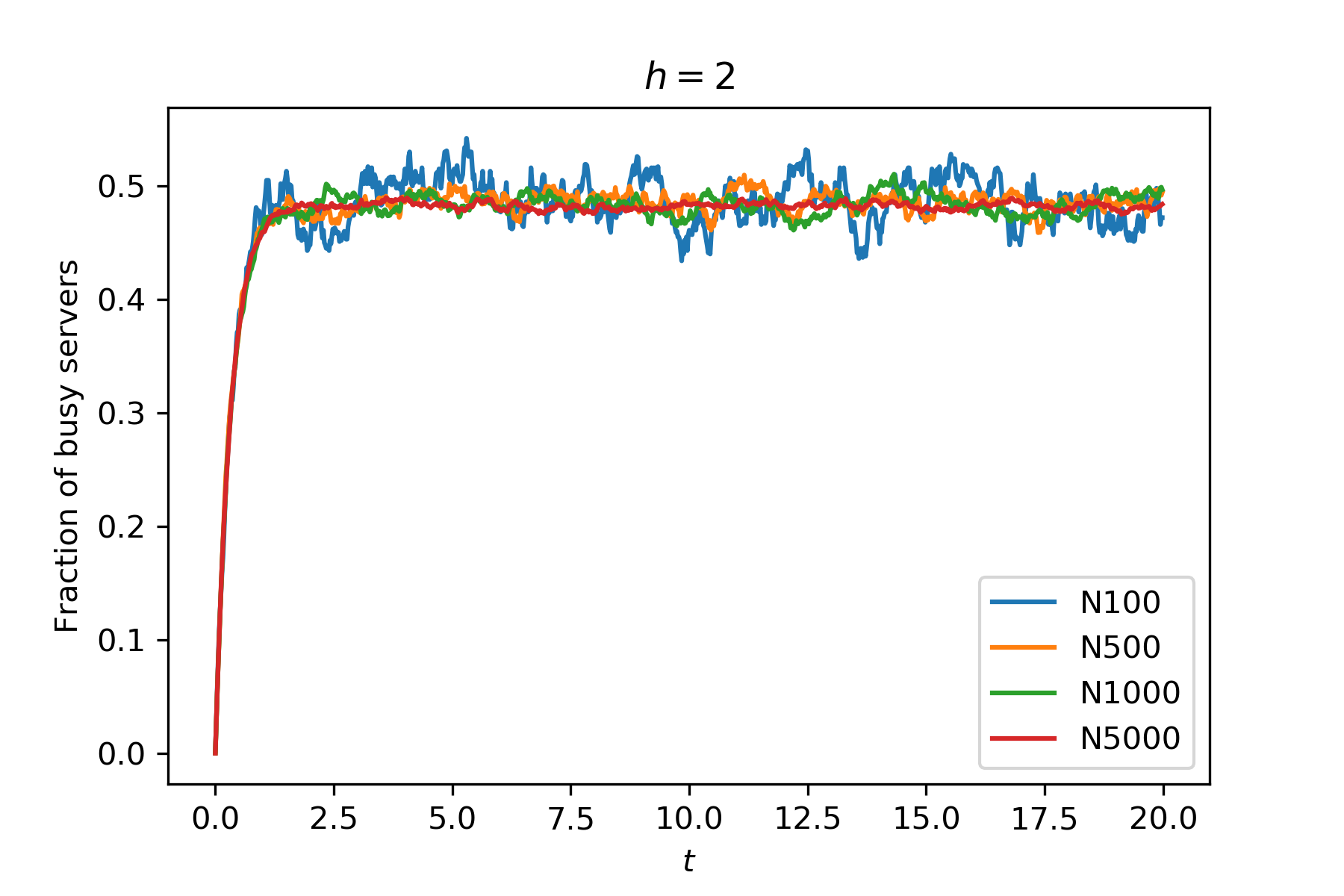}
     \end{subfigure}
     \hfill
     \begin{subfigure}[b]{0.3\textwidth}
         \centering
         \includegraphics[width=\textwidth]{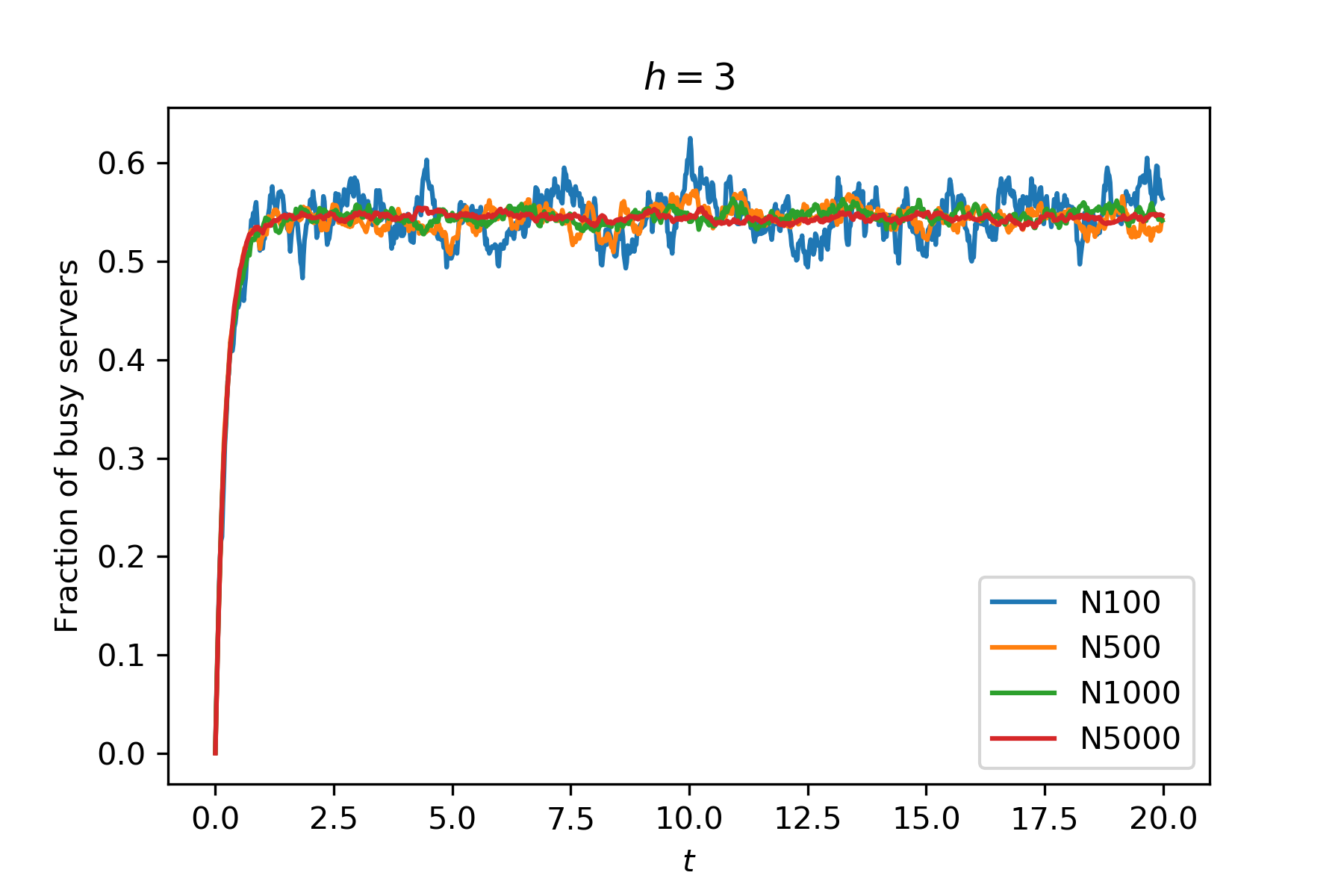}
     \end{subfigure}
     \hfill
     \begin{subfigure}[b]{0.3\textwidth}
         \centering
         \includegraphics[width=\textwidth]{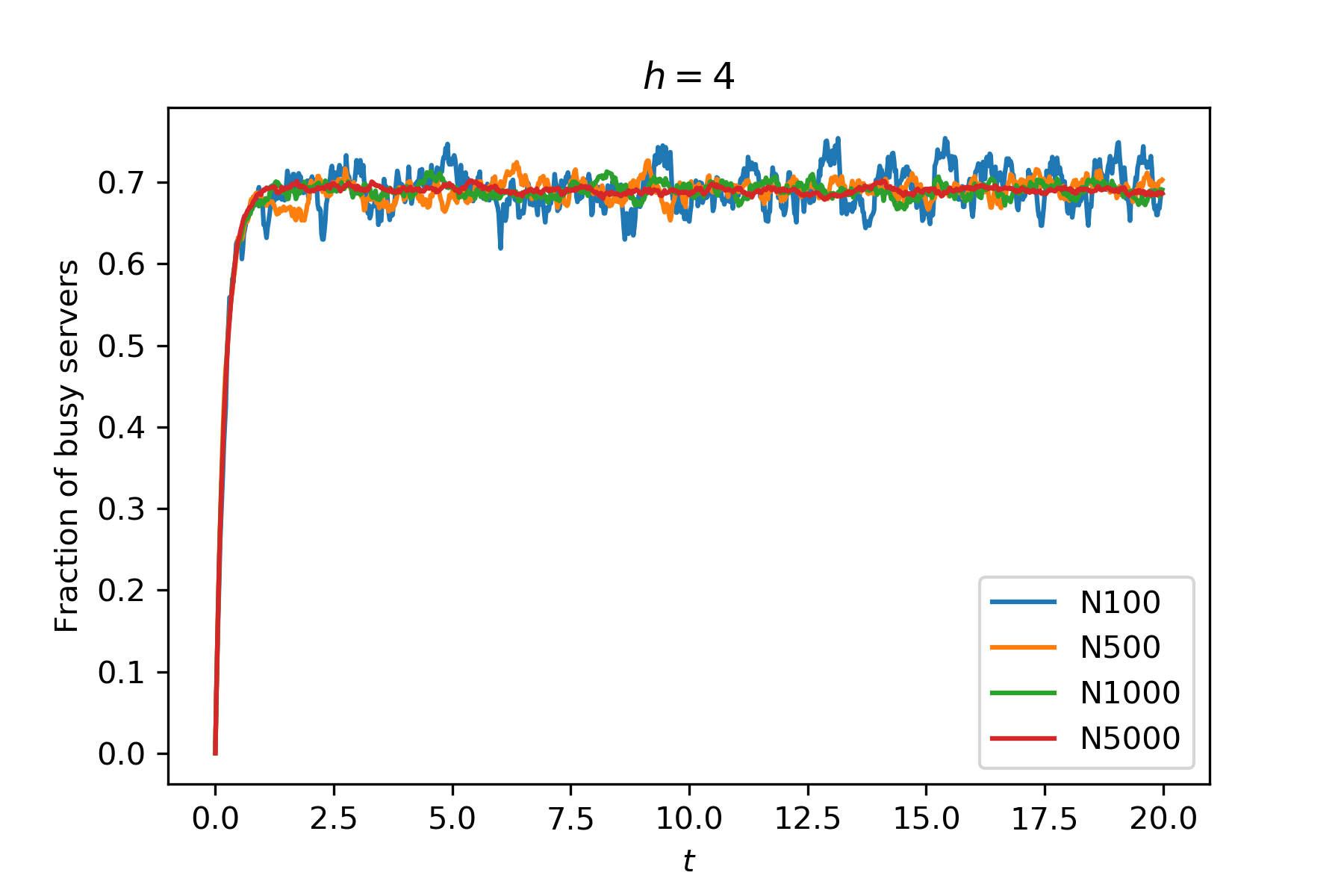}
     \end{subfigure}
     \begin{subfigure}[b]{0.3\textwidth}
         \centering
         \includegraphics[width=\textwidth]{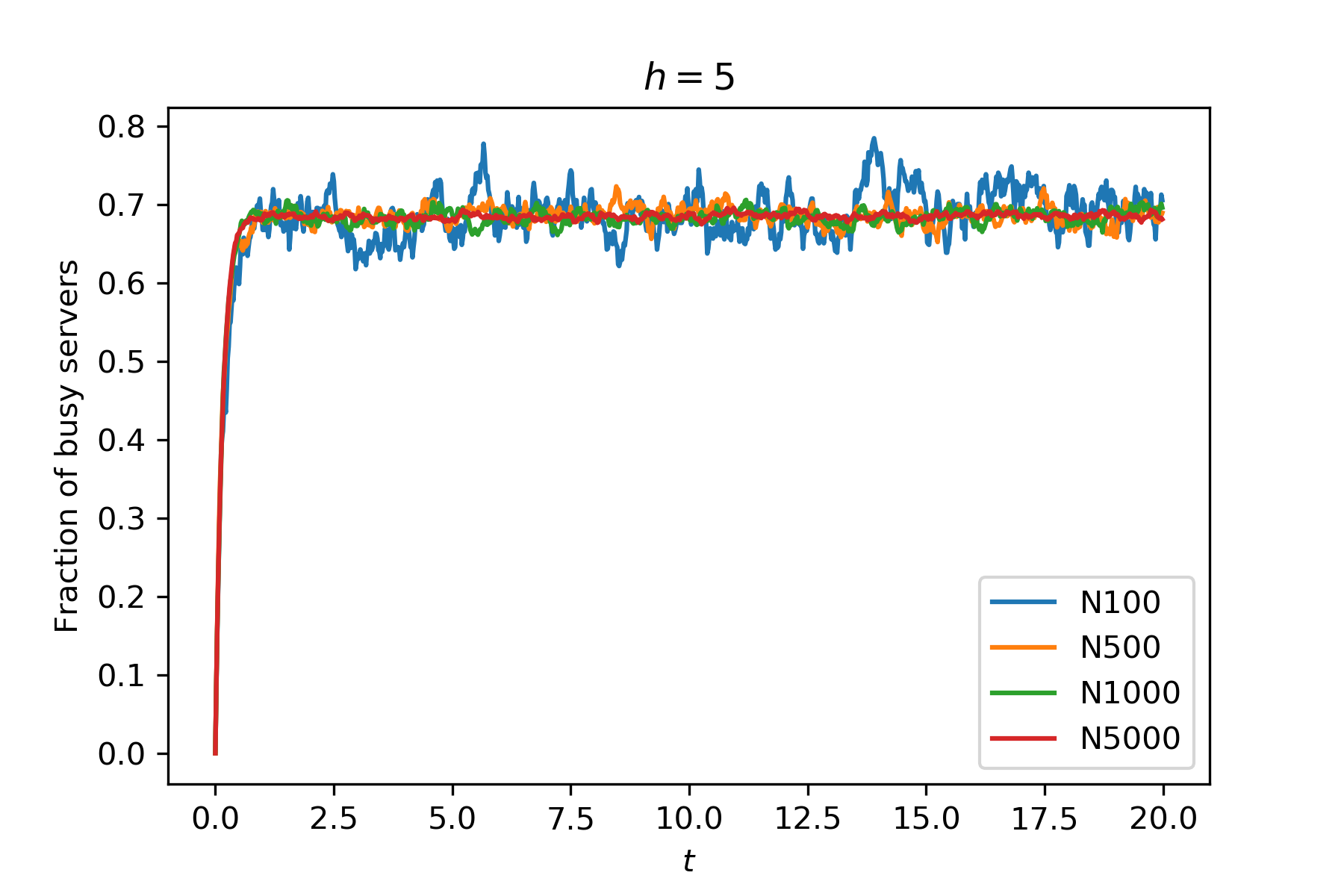}
     \end{subfigure}
     \caption{Sample Path of $\Tilde{X}^N_h$, $h=1,2,3,4,5$}
    \label{fig:sample path}
\end{figure}

%\begin{figure}[!htb]
%    \centering
%    \subfigure{\includegraphics[width=0.3\textwidth]{Idle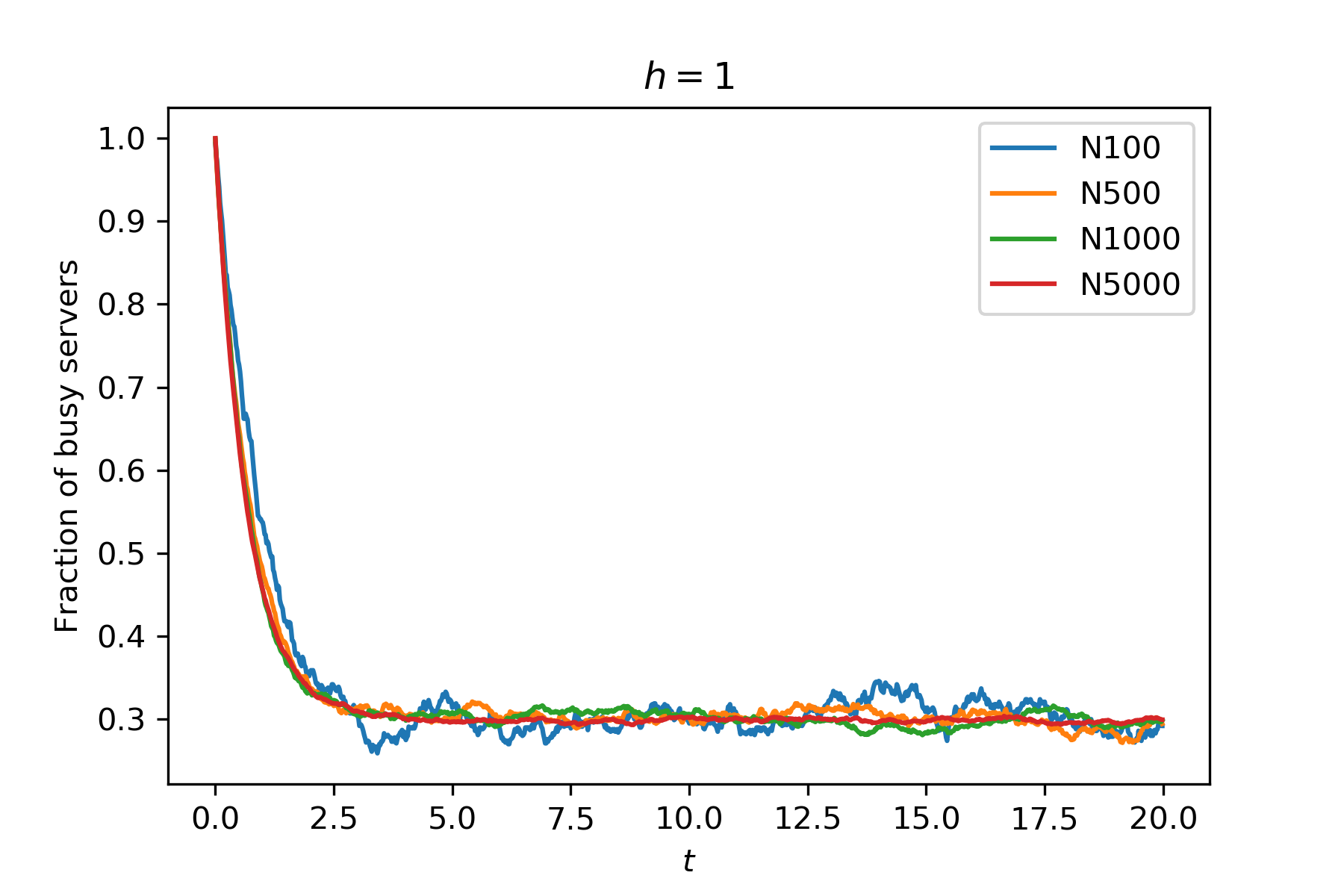}} 
%    \subfigure{\includegraphics[width=0.3\textwidth]{Idle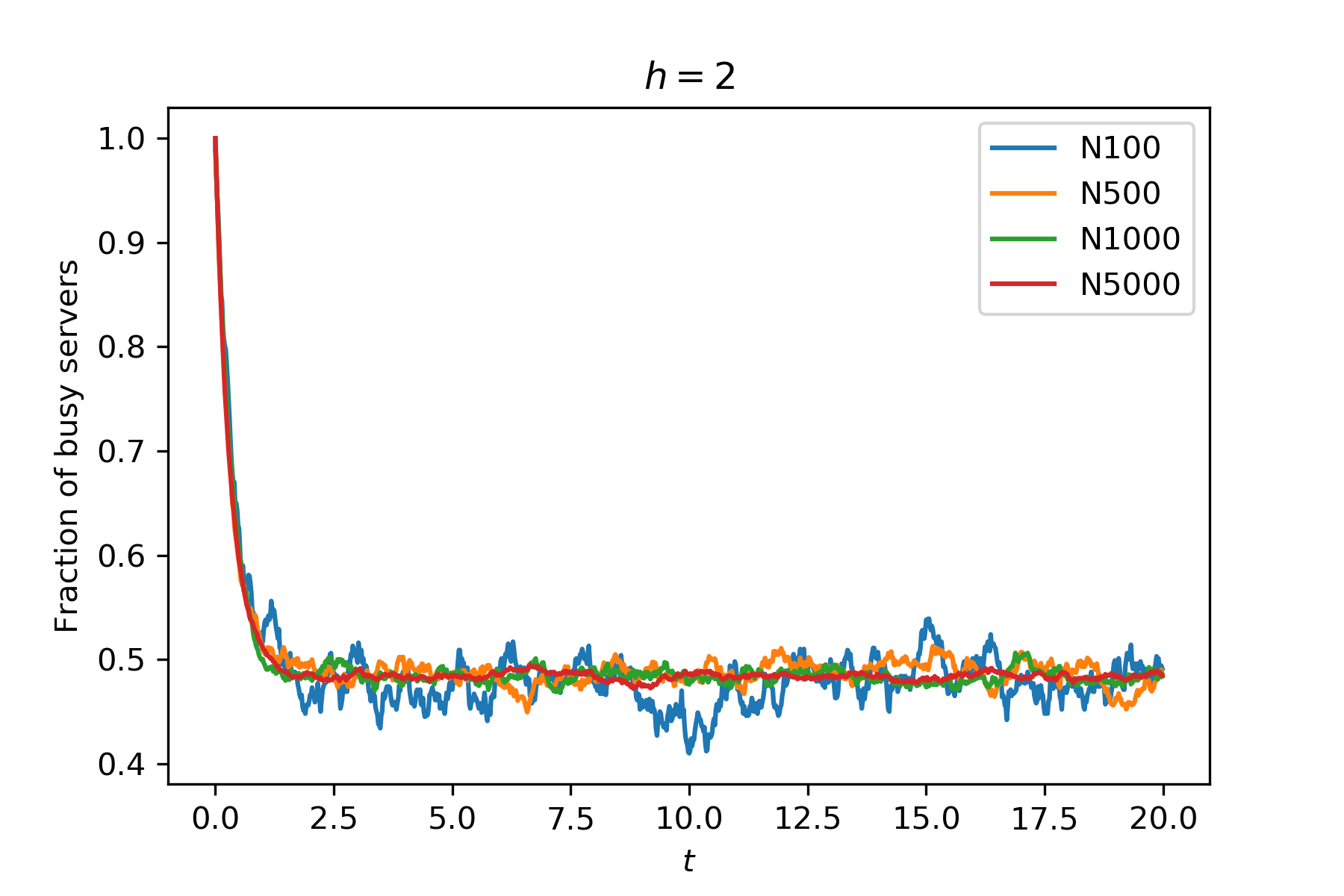}} 
%    \subfigure{\includegraphics[width=0.3\textwidth]{Idle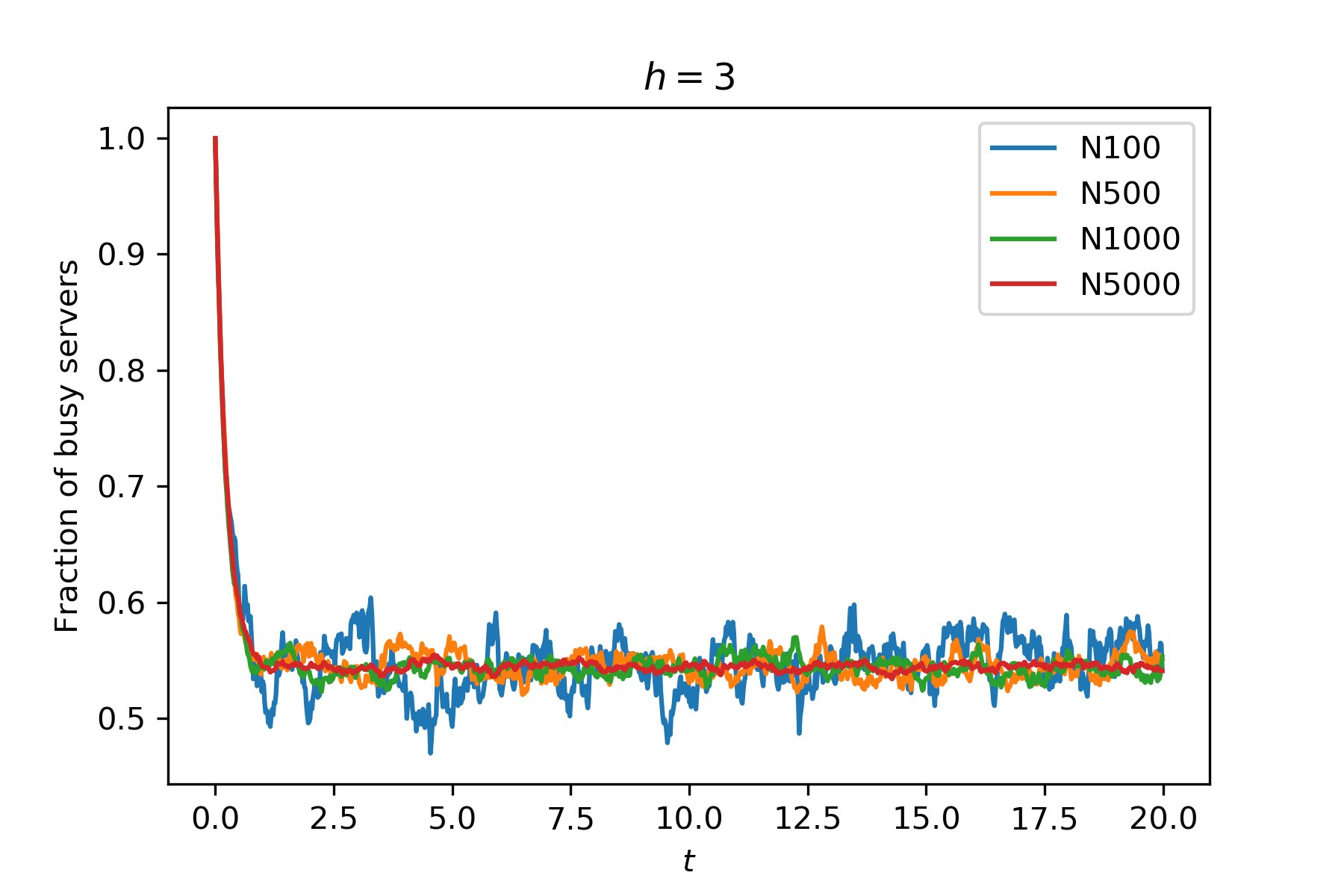}}
%    \subfigure{\includegraphics[width=0.3\textwidth]{Idle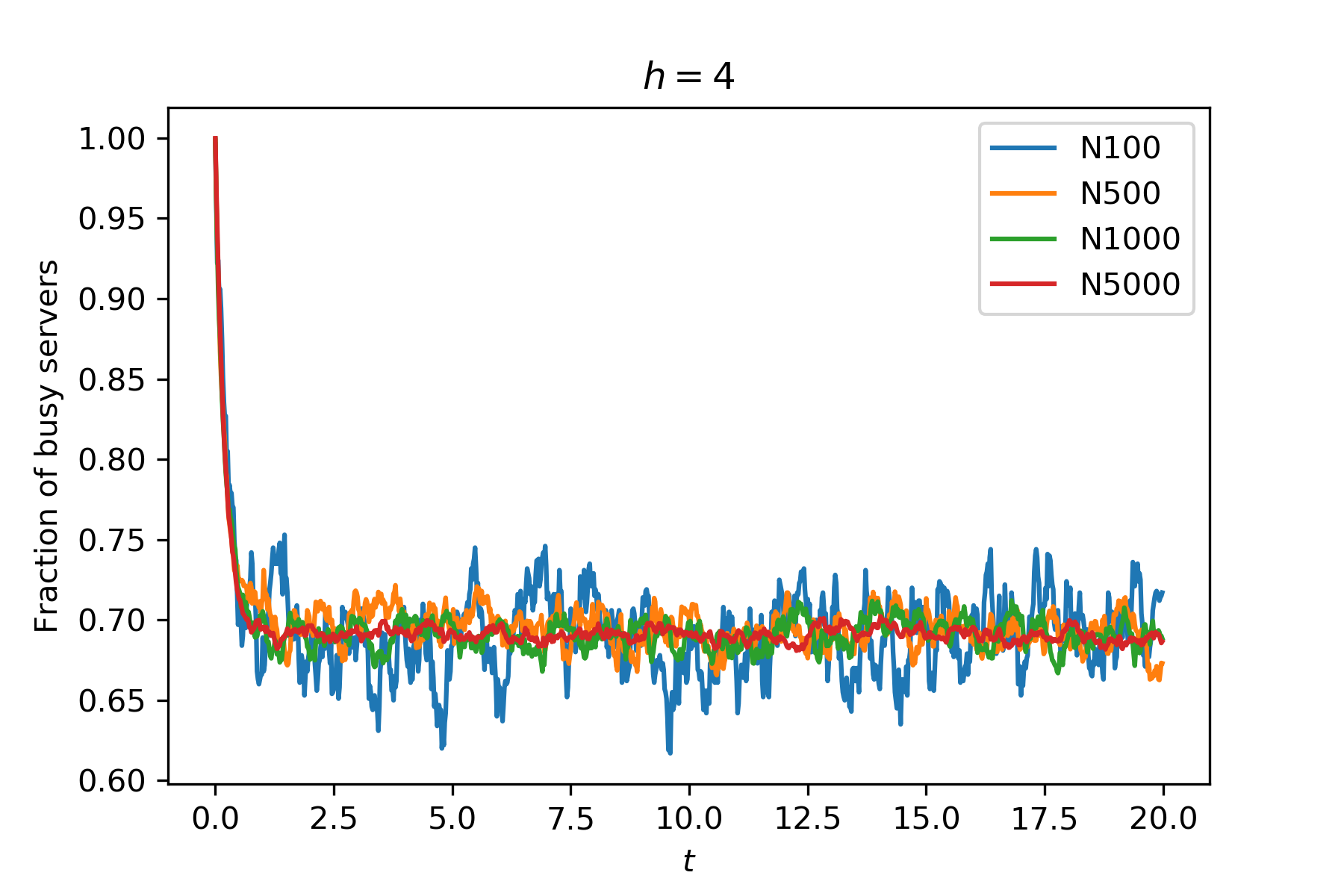}}    
%    \subfigure{\includegraphics[width=0.3\textwidth]{Idle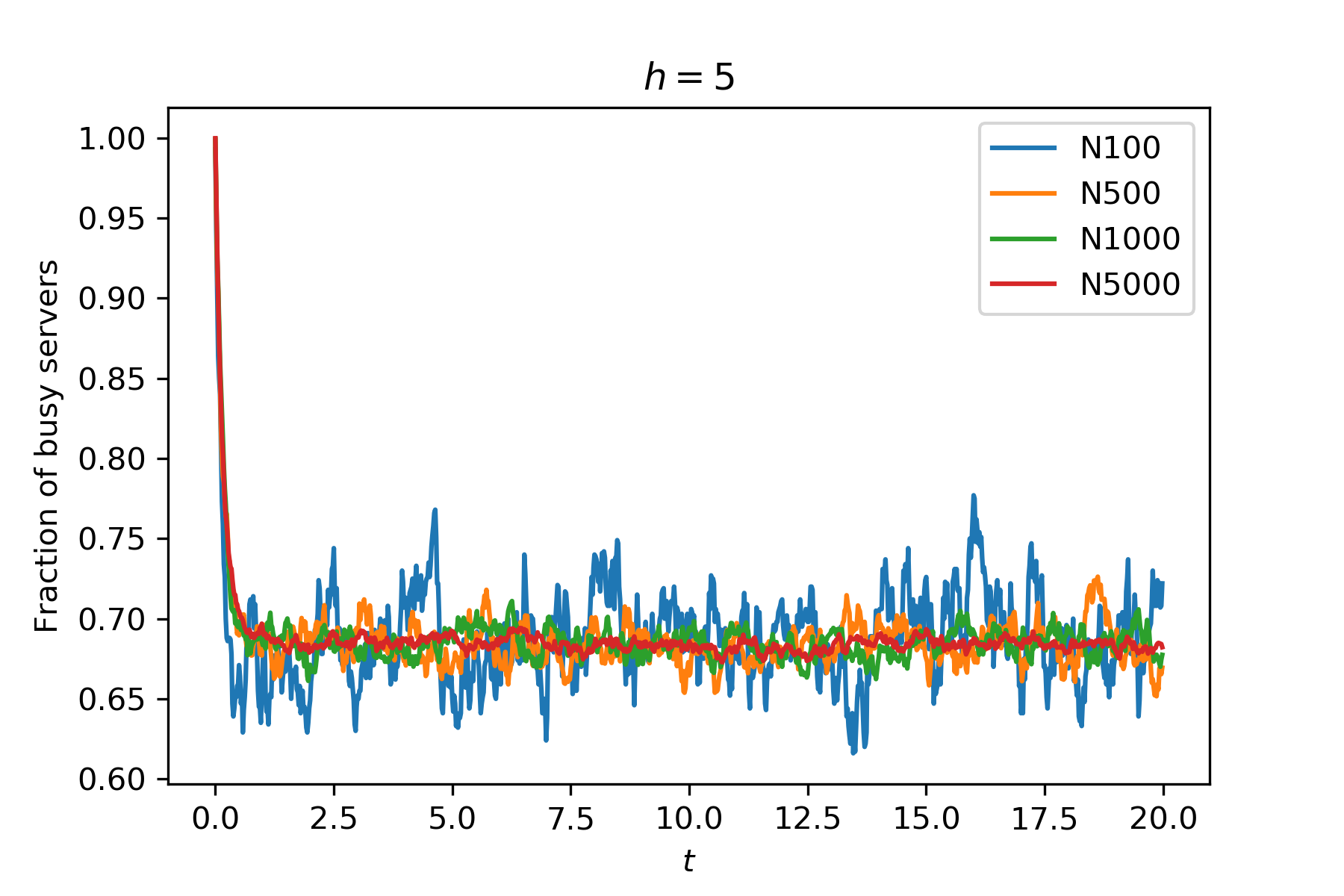}}
%    \caption{Sample Path of $\Tilde{X}^N_h$, $h=1,2,3,4,5$}
%    \label{fig:sample path}
%\end{figure}

\noindent
\textbf{Convergence of steady states.}
We numerically verify the convergence of steady states. Besides the case with idle initial states shown above, we also simulate two extra scenarios with different initial states: one is that all servers have 1 customer in the queue, and the other is that half of servers are idle and the rest half has 1 customer in the queue. The simulation results of both scenarios with $N=100,500,1000,5000$ are provided in~\ref{app:more-numerical}, which shows that the steady state of the $N$-th system converges as $N\rightarrow\infty$ under both scenarios. For comparison of the steady states of the limit systems with three various initial settings (i.e., all-idle, all-busy, and half-busy-half-idle), we draw the sample path of the $N$-th system ($N=5000$) with three various initial states in Figure~\ref{fig:comparison}. We find that when the system size is large, $N=5000$, the sample paths with various initial states become very close after a short time.
\begin{figure}[!htb]
     \centering
     \begin{subfigure}[b]{0.3\textwidth}
         \centering
         \includegraphics[width=\textwidth]{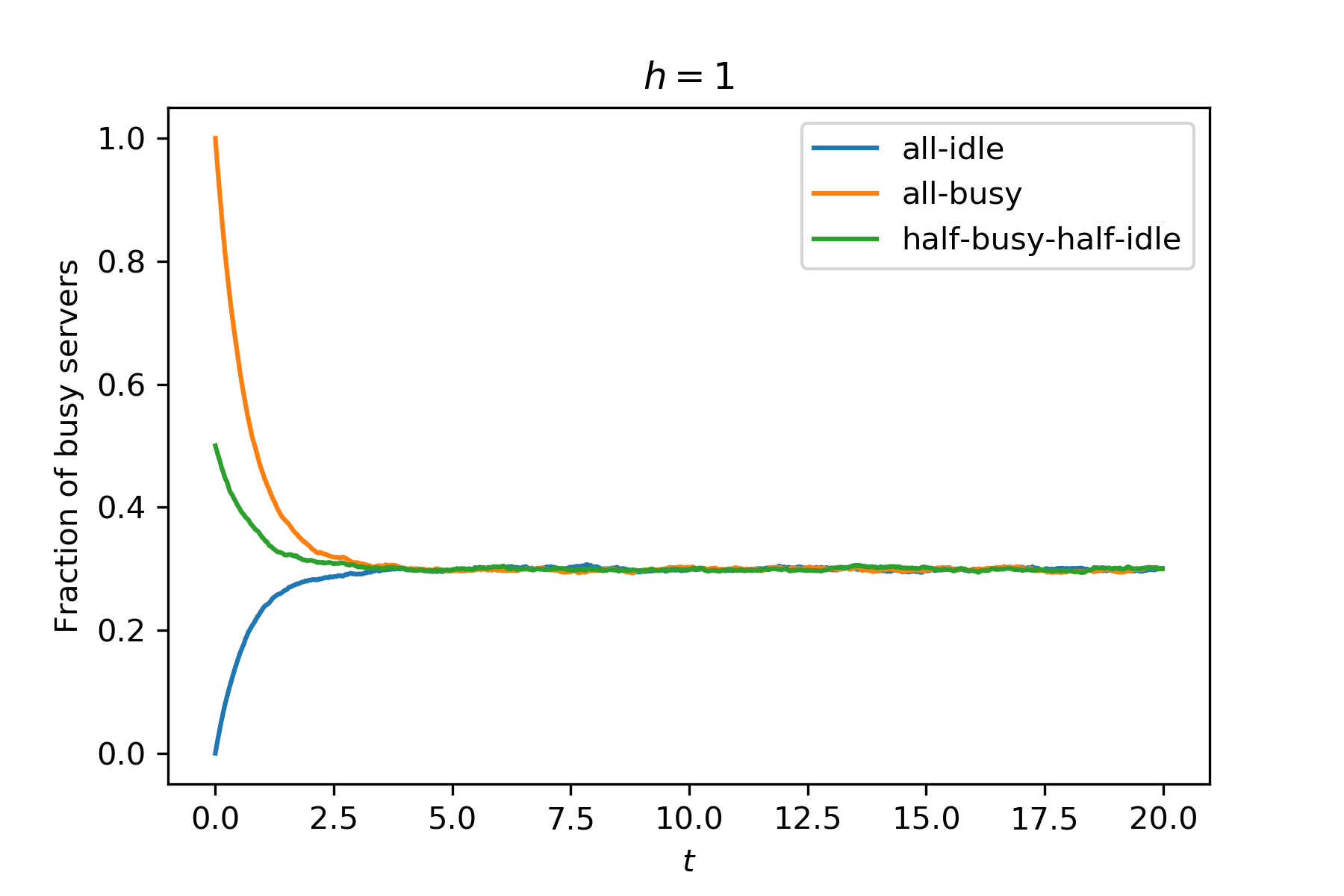}
     \end{subfigure}
     \hfill
     \begin{subfigure}[b]{0.3\textwidth}
         \centering
         \includegraphics[width=\textwidth]{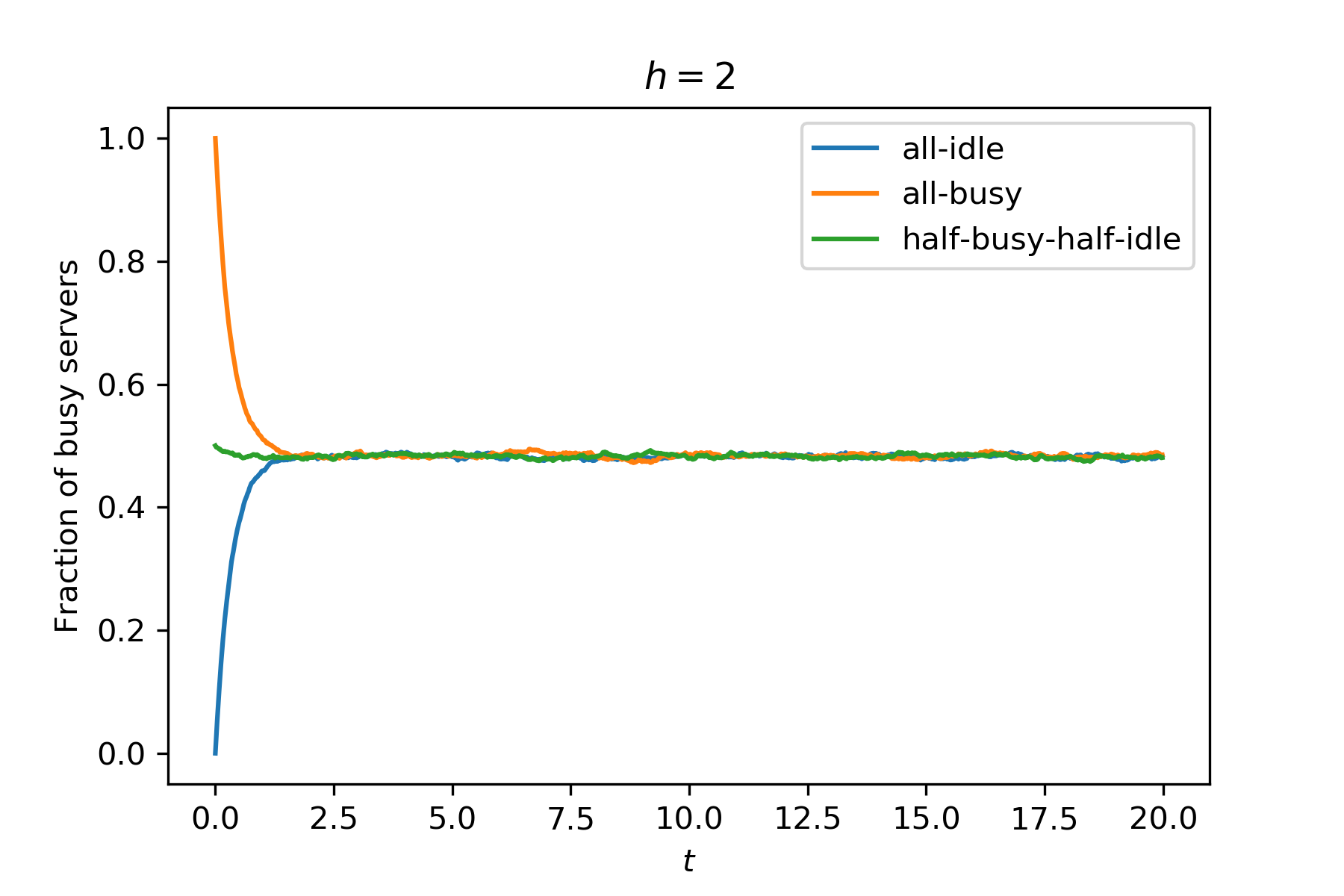}
     \end{subfigure}
     \hfill
     \begin{subfigure}[b]{0.3\textwidth}
         \centering
         \includegraphics[width=\textwidth]{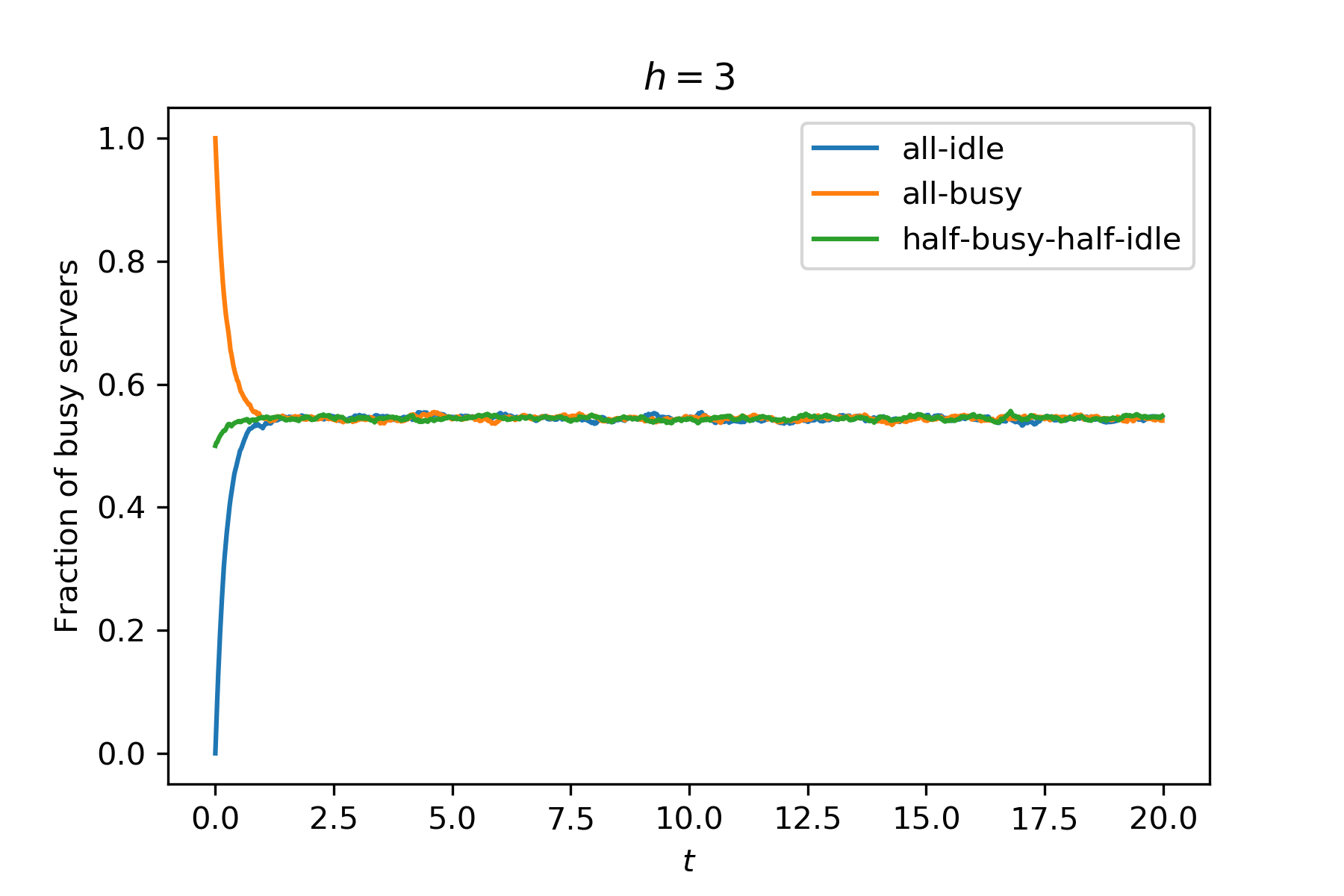}
     \end{subfigure}
     \hfill
     \begin{subfigure}[b]{0.3\textwidth}
         \centering
         \includegraphics[width=\textwidth]{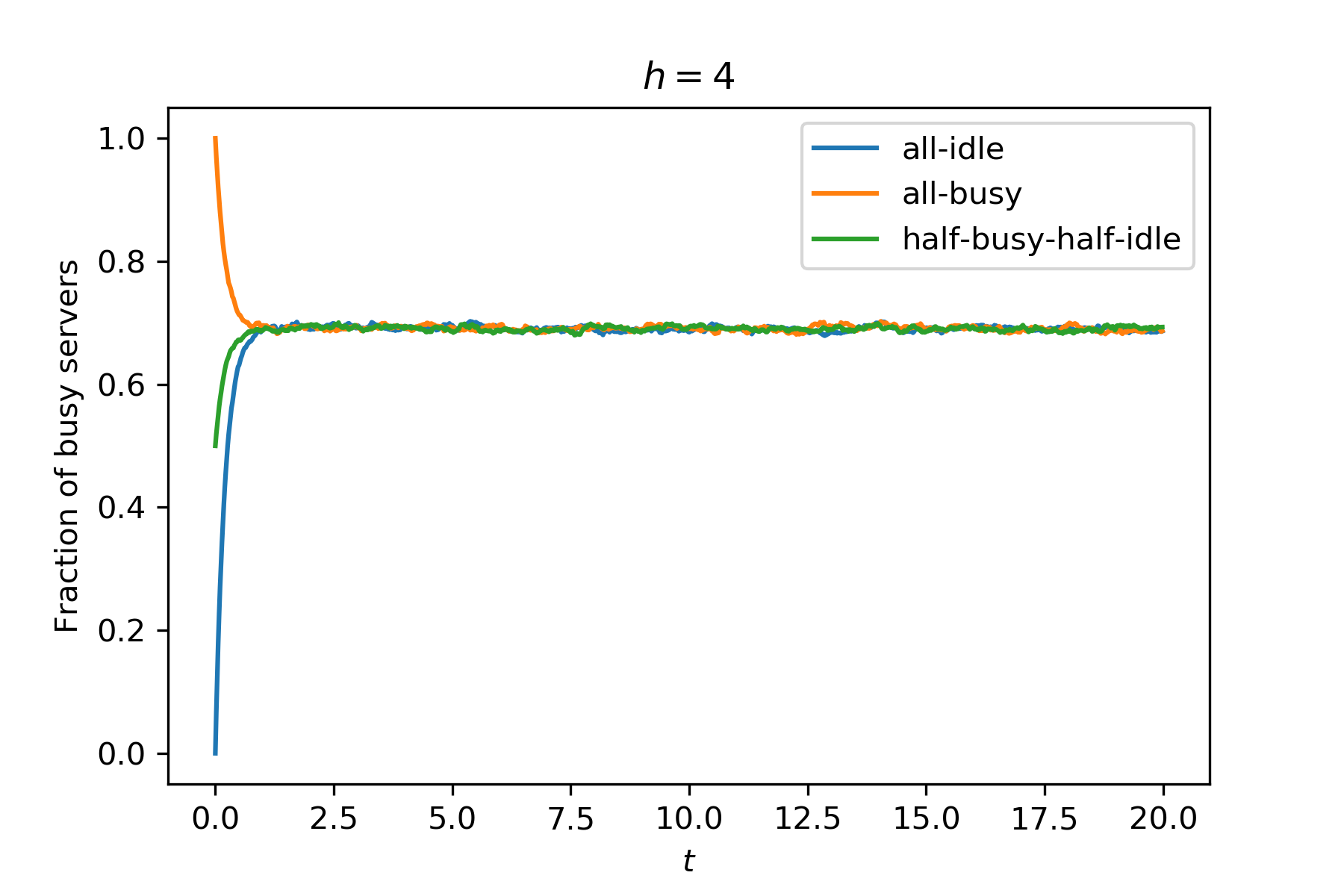}
     \end{subfigure}
     \begin{subfigure}[b]{0.3\textwidth}
         \centering
         \includegraphics[width=\textwidth]{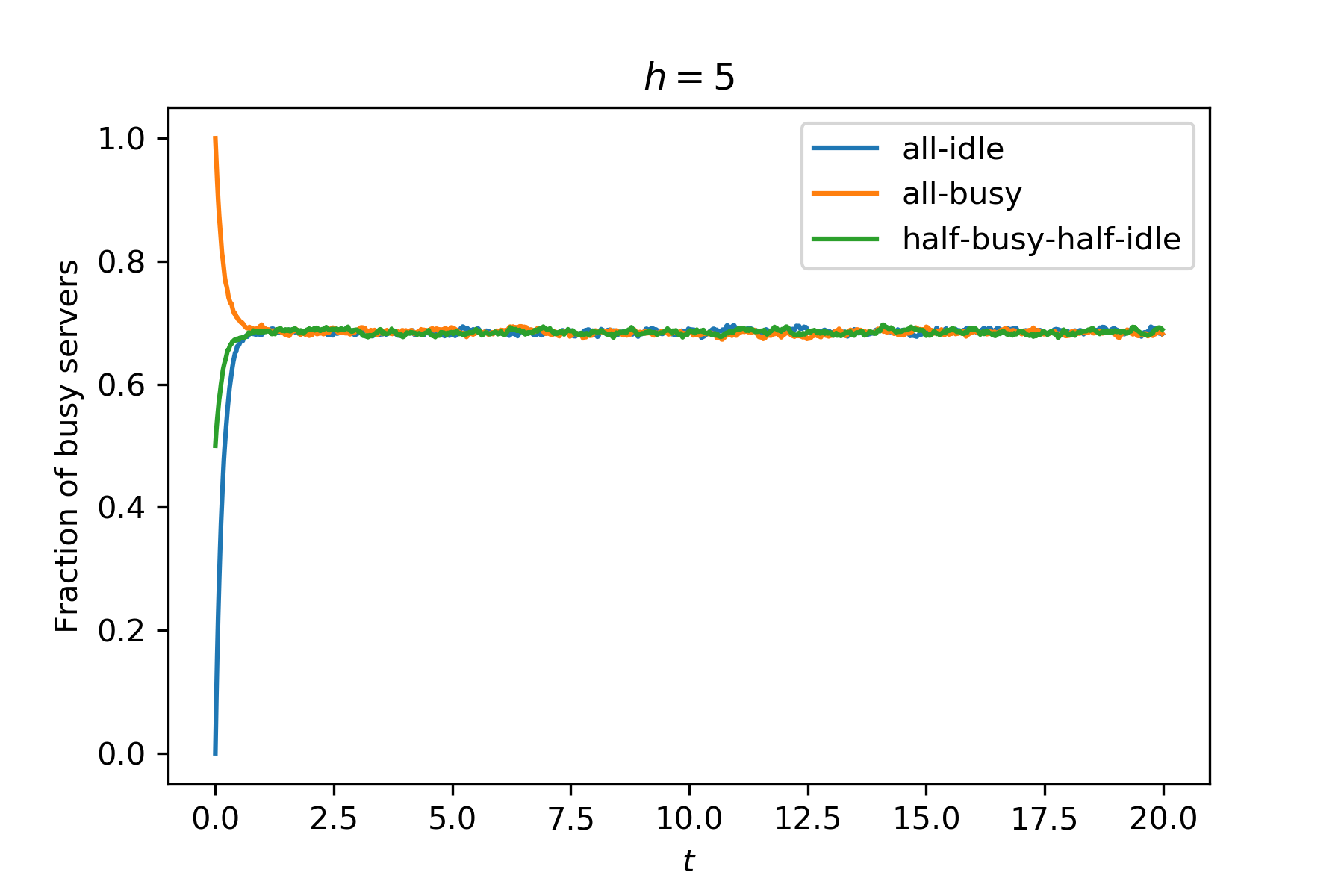}
     \end{subfigure}
     \caption{Sample Path with three various initial states: all-idle, all-busy,half-busy-half-idle}
   \label{fig:comparison}
\end{figure}

%\begin{figure}[!htb]
%    \centering
%    \subfigure{\includegraphics[width=0.3\textwidth]{Comparison1.png}} 
%    \subfigure{\includegraphics[width=0.3\textwidth]{Comparison2.png}} 
%    \subfigure{\includegraphics[width=0.3\textwidth]{Comparison3.png}}
%    \subfigure{\includegraphics[width=0.3\textwidth]{Comparison4.png}}    
%    \subfigure{\includegraphics[width=0.3\textwidth]{Comparison5.png}}
%    \caption{Sample Path with three various initial states: all-idle, all-busy,half-busy-half-idle}
%    \label{fig:comparison}
%\end{figure}

%\section{Conclusion}
%In this paper, we consider a heterogeneous load balancing model that the service rate depends on both types of dispatcher and server. We propose a new framework (Definition~\ref{def:f-sequence}) for consistently modeling a sequence of heterogeneous systems. Also, we design two scalable policies, ICRD (Section~\ref{sec:dispatcher-independent}) and SPD (Section~\ref{sec:server-indep.}), with provable performance guarantee, which shows that the probability of assigning a new task to a busy goes to 0 as system scales (a.k.a., zero-queueing).

%% The Appendices part is started with the command \appendix;
%% appendix sections are then done as normal sections
\appendix

\section{Fluid limit of \textit{dispatcher-independent} systems from~\cite{AS15}}\label{AS15}
%The model and results in this section are from \cite{AS15}.
%\vspace{3mm}

\noindent
\textbf{Model.} The system consists of one dispatcher and $J\geq 1$ server pools $\mathcal{N}_j$, $j\in \mathcal{J}=\{1,...,J\}$. Each pool $\mathcal{N}_j$ has $N_j$ identical servers. Let $\mathcal{N}=\cup_{j}\mathcal{N}_j$ be the set of all servers. The arrival process of tasks at the dispatcher is a Poisson process with rate $\Lambda$, independently of the other processes. Each new task will be assigned to one of servers under JIQ policy. The service time of a task at a server in pool $j\in\mathcal{J}$ is exponentially distributed with mean $1/\mu_j\in(0,\infty)$. 
\vspace{3mm}

\noindent
\textbf{Asymptotic regime.} The total number of servers $N=\sum_{j}|\mathcal{N}_j|$ is the scaling parameter which increases to infinity. Also, the arrival rate and the server pools' sizes increase in proportion to $N$: $\Lambda=N\lambda$, $|\mathcal{N}_j|=N\beta_j$, $j\in\mathcal{J}$ where $\lambda$, $\beta_j$, $j\in\mathcal{J}$ are positive constants with $\sum_{j}\beta_j=1$. $\lambda$, $\beta_j$, $j\in\mathcal{J}$ satisfy the subcritical load condition: $\lambda<\sum_j \mu_j\beta_j$.
\vspace{3mm}

\noindent
\textbf{System state.} Define $x^N_{k,j}$ be the fraction of the total number of servers, which are in pool $j$ and have queue length at least $k\in \N_0$, $j\in\mathcal{J}$. Consider $x^N(t)=(x^N_{k,j}, k\in\N_0, j\in\mathcal{J})$ to be the system state at time $t\geq 0$. $x^N(\cdot)$ is a Markov process.
\begin{theorem}[{\cite[Theorem~2]{AS15}}]
\label{thm:AS15}
For all sufficient large $N$, the Markov process $x^N(\cdot)$ is ergodic and $x^N(\infty)\Rightarrow x^*=(x^*_{k,j},k\in\N_0,j\in\mathcal{J})$, where $x^*$ is the solution of the following equation:
\begin{equation*}
    \begin{split}
        \lambda=\sum_j\mu_jx^*_{1,j},\qquad
        \mu_jx^*_{j,1}/(\beta_j-x^*_{j,1})=\mu_lx^*_{l,1}/(\beta_l-x^*_{l,1}), \forall j,l\in\mathcal{J},\quad\text{and}\quad 
        x^*_{k,j}=0,\forall k\geq 2, j\in\mathcal{J}.
    \end{split}
\end{equation*}
\end{theorem}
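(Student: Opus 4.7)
The plan is to combine a Foster--Lyapunov ergodicity argument with a fluid-limit analysis whose unique global attractor is $x^*$, and then interchange the $N\to\infty$ and $t\to\infty$ limits. In the subcritical regime $\lambda < \sum_j \mu_j\beta_j$, I would take as Lyapunov function the scaled workload $L(x) = \sum_{j\in\mathcal{J}}\sum_{k\geq 1} x_{k,j}/\mu_j$. A direct generator computation shows that the mean drift of $L(x^N(\cdot))$ is bounded above by the difference between $\lambda/N$ and a strictly positive quantity whenever the total busy fraction exceeds the value implied by subcriticality; Foster's criterion then yields positive recurrence of $x^N(\cdot)$ for each sufficiently large $N$. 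The same argument furnishes a uniform-in-$N$ bound on $\E[L(x^N(\infty))]$, which controls the $k$-tails uniformly in $N$ and yields tightness of $\{x^N(\infty)\}_N$ in the product topology on $\prod_{j,k}[0,\beta_j]$.

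Passing to any weak subsequential limit of $x^N(\infty)$, I would construct a stationary fluid sample path via Kurtz-style martingale arguments applied to the Poisson driving processes. Under JIQ, while the total busy fraction $\sum_l x_{l,1}(t)$ is strictly below $1$, an arriving task is routed to an idle server in pool $j$ with probability $(\beta_j - x_{j,1}(t))/(1-\sum_l x_{l,1}(t))$, while each pool-$j$ server of exact queue length $k$ completes at rate $\mu_j$. This yields the fluid ODE
\begin{equation*}
\dot{x}_{j,1}(t) \;=\; \lambda\,\frac{\beta_j - x_{j,1}(t)}{1 - \sum_l x_{l,1}(t)} \;-\; \mu_j\bigl(x_{j,1}(t)-x_{j,2}(t)\bigr), \qquad \dot{x}_{j,k}(t) \;=\; -\mu_j\bigl(x_{j,k}(t) - x_{j,k+1}(t)\bigr)\ \text{for } k\geq 2.
\end{equation*}
Setting all derivatives to zero forces $x^*_{k,j}=0$ for $k\geq 2$, and the busy-fraction coordinates must satisfy $\mu_j x^*_{j,1}/(\beta_j - x^*_{j,1}) = \lambda/(1-\sum_l x^*_{l,1})$, matching the stated characterization. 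Uniqueness follows because the left-hand side is strictly increasing in $x^*_{j,1}$: inverting and summing over $j$ reduces the system to a single monotone scalar equation in the total busy fraction, whose unique solution lies strictly in $(0,1)$ thanks to subcriticality.

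It remains to show that every fluid trajectory starting in the admissible region converges to $x^*$; combined with tightness, this forces $x^N(\infty)\Rightarrow x^*$, since any stationary subsequential limit must be concentrated on the invariant set of the fluid dynamics. For the $k\geq 2$ coordinates convergence to zero is immediate from the explicit exponential cascade. For the busy-fraction coordinates I would use a convex free-energy Lyapunov function of the form $\Phi(x) = \sum_j \int_0^{x_{j,1}}\log\bigl(\mu_j s/(\beta_j-s)\bigr)\,ds - \lambda\log\bigl(1-\sum_l x_{l,1}\bigr)$, which is strictly convex on the admissible set, attains its unique minimum at $x^*$, and whose time derivative along the ODE is non-positive by a Cauchy--Schwarz bound. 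The main obstacle is the behavior at the boundary $\{\sum_l x_{l,1}=1\}$, where the drift vector field is discontinuous and the Lyapunov calculation breaks down: a Filippov-type analysis of the differential inclusion, or a direct argument that trajectories enter the interior in finite time and stay there, is needed to close the argument, and this is the most delicate technical step in \cite{AS15}.
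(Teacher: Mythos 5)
You should first note that the paper does not prove this statement at all: Theorem~\ref{thm:AS15} is quoted verbatim from Stolyar's work \cite[Theorem~2]{AS15} and is reproduced in the appendix only so that the proof of Theorem~\ref{thm:fluid-limit-2} can invoke it. There is therefore no in-paper proof to match; what you have written is a reconstruction of Stolyar's argument, and it follows the generic ``ergodicity $+$ tightness $+$ fluid limit $+$ unique global attractor $+$ interchange of limits'' template rather than Stolyar's actual route, which (as the authors themselves emphasize in their remark following Theorem~\ref{thm:fluid-limit-1}) leans heavily on monotonicity/stochastic ordering of the process to obtain the steady-state interchange.

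Two steps of your plan have genuine gaps. First, the Foster--Lyapunov step does not close under the stated subcriticality condition $\lambda<\sum_j\mu_j\beta_j$. When no idle server is available, JIQ routes uniformly at random, so the expected workload increment per arrival is $\sum_j\beta_j/\mu_j$, and the drift of your $L$ is controlled only if $\lambda\sum_j\beta_j/\mu_j<1$. This can fail in the subcritical regime: take $\beta_1=\beta_2=1/2$, $\mu_1=10$, $\mu_2=0.1$, $\lambda=1$, so that $\sum_j\mu_j\beta_j=5.05>\lambda$ but $\lambda\sum_j\beta_j/\mu_j=5.05>1$. This is precisely why the theorem asserts ergodicity only for all sufficiently large $N$; the large-$N$ structure (idle servers being available with high probability) must enter the stability argument itself, and a state-independent workload drift cannot see it. Second, your free-energy function is miscalibrated: $\partial_{x_{j,1}}\Phi=\log\bigl(\mu_jx_{j,1}/(\beta_j-x_{j,1})\bigr)+\lambda/(1-\sum_lx_{l,1})$, so writing $a=\lambda/(1-\sum_lx_{l,1})$ and $g_j=\mu_jx_{j,1}/(\beta_j-x_{j,1})$ one gets
\begin{equation*}
\frac{d\Phi}{dt}=\sum_j(\beta_j-x_{j,1})\,(a-g_j)\,(\log g_j+a),
\end{equation*}
which has no sign (e.g.\ $a=1$, $g_j=0.9$ gives a positive term). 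The cancellation you want requires the coupling term to have gradient $\log a$, not $a$; i.e.\ replace $-\lambda\log(1-\sum_lx_{l,1})$ by $G(\sum_lx_{l,1})$ with $G'(u)=\log(\lambda/(1-u))$, after which each summand $(\beta_j-x_{j,1})(a-g_j)(\log g_j-\log a)\le0$ term-by-term and no Cauchy--Schwarz is needed. A further small point: stationarity of the fluid equations for $k\ge2$ only yields $x^*_{j,2}=x^*_{j,3}=\cdots$; you need the tail/finite-workload bound from your tightness step to conclude these constants are zero.
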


\section{Fluid limit of server-independent systems}\label{app:server-independent-fluid}
\noindent
\textbf{Model.} The system consists of $K$ dispatchers and $N$ identical servers. For each dispatcher $k\in[K]$, the arrival process of tasks is a Poisson process with rate $\lambda^N_k=N\lambda_k$, independent of the other processes. Each new task will be assigned to one of servers under JIQ policy. Tasks from the dispatcher $k\in[K]$ are called type $k$ tasks. The service time of a task of type $k\in[K]$ at a server is exponentially distributed with mean $1/\mu_{k}\in(0,\infty)$.
\vspace{3mm}

\noindent
\textbf{Asymptotic regime.} For all $k\in[K]$, $\lambda_k$ is a finite positive constant. $\lambda_k$ and $\mu_k$, $k\in[K]$, satisfy the subcritical load condition: $\sum_{k\in[K]}\lambda_k/\mu_k<1$.
\vspace{3mm}

\noindent
\textbf{System state.} Let $X^N_{k}$ be the number of servers that are currently serving a task of type $k\in[K]$ and $x^N_{k}=\frac{X^N_k}{N}$. Consider $x^N(t)=(x^N_k(t),k\in[K])$ to be the system state at time $t\geq 0$. For any $N$, we view $x^N$ as an element of the common space
$$\chi_1=\Big\{x=(x_k,k\in[K]):\sum_{k\in[K]}x_k=1\text{ and } x_k\geq 0,\forall k\in[K]\Big\}$$
equipped with the $\ell_1$-norm.

\begin{prop}\label{prop:fluid-multiclass}
Assume that for each $N$-th system, it starts with the idle state, i.e., $x^N_k(0)=0$, $\forall k\in[K]$. Then for any finite $T\geq 0$,  the scaled process $x^N$ converges weakly to the deterministic process $x$ uniformly on $[0,T]$,
where $x(t)=(x_{k}(t),k\in[K])$, $t\in[0,T]$ and each $x_{k}(t)$ satisfies the following differential equation: 
\begin{equation}\label{eq:ode-multitask}
    \frac{d x_{k}(t)}{dt}=\lambda_{k}-\mu_k x_{k}(t).
\end{equation}
\end{prop}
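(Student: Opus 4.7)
The plan is to reduce the $K$-class system, on any bounded time horizon, to a family of $K$ decoupled $M/M/\infty$-type processes and then invoke the standard functional law of large numbers. The key structural observation under JIQ is that a type-$k$ arrival increments $X_k^N$ precisely when the system has at least one idle server at the arrival epoch; when every server is busy, the task is routed to some busy server and merely enlarges its queue, which leaves the state descriptor $(X_1^N,\dots,X_K^N)$ unchanged. Consequently, as long as $\sum_{k\in[K]} x_k^N(t)<1$, each $X_k^N$ evolves as a pure birth--death chain with arrivals at rate $N\lambda_k$ and completions at rate $\mu_k X_k^N$, \emph{independently} across $k$ (since completions occur at a rate depending only on $X_k^N$, and arrivals at the different classes come from independent Poisson streams).

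Fix $\delta>0$ with $\sum_k \lambda_k/\mu_k<1-\delta$ and define the stopping time
$$\tau^N\;:=\;\inf\Big\{t\geq 0:\ \textstyle\sum_{k\in[K]} x_k^N(t)\geq 1-\delta\Big\}.$$
On $[0,\tau^N)$, the random-time-change representation gives
$$X_k^N(t)=\mathcal{A}_k\!\left(N\lambda_k t\right)-\mathcal{D}_k\!\left(\mu_k\!\int_0^t X_k^N(s)\,ds\right),$$
where $\{\mathcal{A}_k\}_{k}$ and $\{\mathcal{D}_k\}_k$ are mutually independent unit-rate Poisson processes. Scaling by $N$, using the FSLLN $\sup_{s\leq CT}|\mathcal{A}(Ns)/N-s|\to 0$ a.s., and applying Gronwall's inequality to the integral equation $x_k^N(t)=\lambda_k t-\mu_k\int_0^t x_k^N(s)\,ds+o_{\sss \PP}(1)$ shows that on $[0,T\wedge\tau^N]$ the process $x_k^N$ converges uniformly in probability to the unique solution of $\dot x_k=\lambda_k-\mu_k x_k$ with $x_k(0)=0$, namely $x_k(t)=(\lambda_k/\mu_k)(1-e^{-\mu_k t})$.

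The deterministic limit satisfies $\sum_k x_k(t)\leq\sum_k \lambda_k/\mu_k<1-\delta$ uniformly on $[0,T]$, which closes the argument: by continuity of $t\mapsto \sum_k x_k^N(t)$ and the uniform convergence just established, $\PP(\tau^N\leq T)\to 0$ as $N\to\infty$. Hence the representation and its fluid limit extend to the whole interval $[0,T]$, giving the desired weak (in fact uniform-in-probability) convergence. The main obstacle is exactly this self-referential step: the clean $M/M/\infty$ dynamics are only valid \emph{before} $\tau^N$, yet we need the ODE to prove $\tau^N>T$ with high probability. The standard bootstrap — establish convergence on $[0,T\wedge\tau^N]$, read off that the limit stays strictly below $1-\delta$, and then transfer this back to show $\tau^N$ exceeds $T$ with high probability — resolves the circularity; everything else reduces to routine functional LLN estimates for compound Poisson processes, which can alternatively be packaged via Kurtz's density-dependent jump-Markov theorem.
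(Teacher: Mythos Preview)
Your proposal is correct and follows essentially the same strategy as the paper: exploit that before the system fills up the $K$ classes decouple into independent $M/M/\infty$-type dynamics, establish the fluid limit via a functional LLN argument, and then bootstrap to show the ``all-busy'' threshold is never reached on $[0,T]$. The paper packages this slightly differently---it introduces an auxiliary unit-buffer (truncated) system, proves convergence for it via martingale bounds plus relative compactness and the continuous mapping theorem, and only then couples back to the original system using the hitting time $\tau^N=\inf\{t:\sum_k x_k^N\geq 1\}$---whereas you work directly with the stopped process at level $1-\delta$ and invoke FSLLN plus Gronwall (or Kurtz). Your route is a bit more streamlined; the paper's explicit truncated system makes the ``auxiliary process valid for all $t$'' step completely transparent, which sidesteps the minor care needed when applying FSLLN to a representation that is only asserted on the random interval $[0,\tau^N)$.
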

The proof is fairly straightforward. We provide it here for completeness.
\begin{proof}
First, consider the truncated system where all servers have unit buffer capacity. 
In other words, if a task arrives at the system and finds that there is no idle server, then it will be lost. In the truncated system under the JIQ policy, a new task will always be assigned to an idle server, if any. Let $\Bar{x}^N_k$ be the fraction of servers that are serving tasks of type $k\in[K]$ and $\Bar{x}^N=(\Bar{x}^N_k,k\in[K])$. Let $A_k$ and $D_k$, $k\in[K]$, are Poisson process with unit rate and independent of each other. 
Hence, we can write the system evolution as follows: for all $k\in[K]$,
\begin{equation}\label{eq:xN-martingale}
    \begin{split}
        \Bar{x}^N_k(t)=&\frac{1}{N}A_k\big(N\lambda_k\mathds{1}_{(\sum_{k\in[K]}\Bar{x}^N_k(t)<1)}t\big)-\frac{1}{N}D_k\big(N \Bar{x}^N_k(t) \mu_k t\big)
        = \lambda_k t \mathds{1}_{(\sum_{k\in[K]}\Bar{x}^N_k(t)<1)}- \Bar{x}^N_k(t) \mu_k t + \mathcal{M}^N_{A_k}(t)-\mathcal{M}^N_{D_k}(t),
    \end{split}
\end{equation}
where $$\mathcal{M}^N_{A_k}(t)=\frac{1}{N}A_k\big(N\lambda_k\mathds{1}_{(\sum_{k\in[K]}\Bar{x}^N_k(t)<1)}\big)-\lambda_k\mathds{1}_{(\sum_{k\in[K]}\Bar{x}^N_k(t)<1)}\quad \text{and}\quad \mathcal{M}^N_{D_k}(t)=\frac{1}{N}D_k\big(N \Bar{x}^N_k(t) \mu_k\big)- \Bar{x}^N_k(t) \mu_k. $$
By Doob's inequality, we have that for any $\varepsilon>0$,
\begin{equation}
    \begin{split}
        \PP\big(\sup_{t\in[0,T]}\mathcal{M}^N_{A_k}(t)\geq \varepsilon  \big)
        =&\PP\big(\sup_{t\in[0,T]}A_k\big(N\lambda_k\mathds{1}_{(\sum_{k\in[K]}\Bar{x}^N_k(t)<1)}\big) - N \lambda_k\mathds{1}_{(\sum_{k\in[K]}\Bar{x}^N_k(t)<1)} \geq N\varepsilon  \big)\\
        \leq &\frac{N\lambda_k}{N^2\varepsilon^2}\rightarrow 0,\text{ as }N\rightarrow\infty,
    \end{split}
\end{equation}
which implies that $\{\mathcal{M}^N_{A_k}(t)\}_{t\geq 0}\dto0$. Similarly, we get $\{\mathcal{M}^N_{D_k}(t)\}_{t\geq 0}\dto0$, and by the independence of the processes $A(\cdot)$ and $D(\cdot)$, 
\begin{equation}\label{eq:martingale-conv}
    \Big\{\mathcal{M}^N_{A_k}(t)-\mathcal{M}^N_{D_k}(t)\Big\}_{t\geq 0}\dto0.
\end{equation}
Now, we prove the relative compactness of the processes $\Bar{x}^N$. To show the relative compactness, we will verify the conditions in Theorem~\ref{thm: relative-compact}. 
Since $\sum_{k\in[K]}\Bar{x}^N_k=1$, condition~\eqref{eq:cond-a} holds trivially. For condition~\eqref{eq:cond-b}, note that
\begin{equation}\label{eq:r-x1-x2}
    \begin{split}
        r(\Bar{x}^N(t_1)-\Bar{x}^N(t_2))\leq &\sum_{k\in[K]}\big|\lambda_k\mathds{1}_{(\sum_{k\in[K]}\Bar{x}^N_k(t_1)<1)}t_1-\lambda_k\mathds{1}_{(\sum_{k\in[K]}\Bar{x}^N_k(t_2)<1)}t_2\big|
        +\sum_{k\in[K]}\big|\mu_k(\Bar{x}^N_k(t_1)t_1-\Bar{x}^N_k(t_2)t_2)\big|\\
        &\hspace{4cm}+\sum_{k\in[K]}\big|\mathcal{M}^N_{A_k}(t_1)-\mathcal{M}^N_{D_k}(t_1)-\mathcal{M}^N_{A_k}(t_2)+\mathcal{M}^N_{D_k}(t_2)\big|\\
        \leq & \sum_{k\in[K]}(\lambda_k+\mu_k)|t_1-t_2|
        +\sum_{k\in[K]}\big|\mathcal{M}^N_{A_k}(t_1)-\mathcal{M}^N_{D_k}(t_1)-\mathcal{M}^N_{A_k}(t_2)+\mathcal{M}^N_{D_k}(t_2)\big|
    \end{split}
\end{equation}
By \eqref{eq:martingale-conv} and \eqref{eq:r-x1-x2}, we have that for any finite partition $\{t_i\}_{i=1}^n$ of $[0,T]$,
$$\max_{i}\sup_{s,t\in[t_{i-1},t_i)}r(\Bar{x}^N(s)-\Bar{x}^N(t))\leq \sum_{k\in[K]}(\lambda_k+\mu_k)\max_{i}(t_i-t_{i-1})+\zeta^N,$$
where $\PP(\zeta^N\geq \eta/2)\leq \eta$ for all large enough $N$. Now, take $\delta=\eta/(4(\sum_{k\in[K]}(\lambda_k+\mu_k))$ and any partition with $\max_{i}(t_i-t_{i-1})\leq \eta / (2(\sum_{k\in[K]}(\lambda_k+\mu_k))$ and $\min_{i}(t_i-t_{i-1})>\delta$. Hence, on the event $\{\zeta^N\geq \eta/2\}$, 
$$\max_{i}\sup_{s,t\in[t_{i-1},t_i)}r(\Bar{x}^N(s)-\Bar{x}^N(t))\leq\eta.$$
Therefore, for all large enough $N$,
$$\PP\big( \max_{i}\sup_{s,t\in[t_{i-1},t_i)}r(\Bar{x}^N(s)-\Bar{x}^N(t))\leq\eta\big)\leq \PP(\zeta^N\geq \eta/2)\leq \eta.$$
Next, we need to show that the limit $\Bar{x}(\cdot)$ of any convergent subsequence satisfies the following equation:
$$\frac{d\Bar{x}_k(t)}{dt}=\lambda_k  \mathds{1}_{(\sum_{k\in[K]}\Bar{x}^N_k(t)<1)}- \Bar{x}^N_k(t) \mu_k,\quad \forall k\in[K].$$
Define for each $k\in[K]$,
$$F_k(\Bar{x},\mathcal{M}_A,\mathcal{M}_D)=\lambda_k t \mathds{1}_{(\sum_{k\in[K]}\Bar{x}_k(t)<1)}- \Bar{x}_k(t) \mu_k t + \mathcal{M}_A(t)-\mathcal{M}_D(t).$$
It is easy to check that $\mathbf{F}=(F_1,...,F_K)$ is a continuous map, and by the continuous mapping theorem, we get the desired result. Now, we consider the original, nontruncated system described by $x^N(\cdot)$. Define 
$$\tau^N=\inf \Big\{t\geq 0: \sum_{k\in[K]}x^N_k\geq 1\Big\}.$$
By natural coupling, for all $t\in[0,\tau^N]$, $x^N$ and $\Bar{x}^N$ are identical. 
Since for any fixed $T\geq 0$, 
\begin{equation}
    \begin{split}
        \PP(\tau^N\leq T)=\PP(\sup_{t}\sum_{k\in[K]}\Bar{x}^N_k(t)\geq 1)\leq \PP(\sum_{k\in[K]}\lambda_k/\mu_k\geq 1)=0,
    \end{split}
\end{equation}
we conclude that $x^N\Rightarrow x$ which satisfies \eqref{eq:ode-multitask}. 
\end{proof}

\noindent
Let $(E,r)$ be complete and separable metric space. For any $x\in D_E[0,\infty)$, $\delta>0$ and $T>0$, define 
$$\omega'(X_n,\delta,T)\geq \eta)=\inf_{t_i}\max_{i}\sup_{s,t\in [t_{i-1},t_i)}r(x(s),x(t)),$$ 
where $\{t_i\}$ ranges over all partitions of the form $0=t_0<t_1<\cdots<t_{n-1}<T\leq t_n$ with $\min_{1\leq i\leq n}(t_i-t_{i-1})>\delta$ and $n\geq1$.

\begin{theorem}[{\cite[Theorem~7.2]{EK09}}]\label{thm: relative-compact}
Let $(E,r)$ be complete and separable, and let $\{X_n\}_{n\geq1}$ be a family of processes with sample paths in $D_E[0,\infty)$. Then, $\{X_n\}_{n\geq1}$ is relatively compact if and only if the following two conditions hold: 

(a) For every $\eta>0$ and rational $t\geq0$, there exists a compact set $\Gamma_{\eta,t}\subset E$ such that
\begin{equation}\label{eq:cond-a}
    \varliminf_{n\rightarrow\infty}\PP(X_n\in \Gamma_{\eta,t})\geq 1-\eta.
\end{equation}

(b) For every $\eta>0$ and $T>0$, there exists $\delta>0$ such that
\begin{equation}\label{eq:cond-b}
    \varlimsup_{n\rightarrow\infty} \PP(\omega'(X_n,\delta,T)\geq \eta)\leq \eta.
\end{equation}

\end{theorem}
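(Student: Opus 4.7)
The plan is to reduce the theorem to Prokhorov's theorem combined with the Skorohod-space analog of the Arzelà-Ascoli characterization of relative compactness. Since $(E,r)$ is Polish, the space $D_E[0,\infty)$ endowed with the Skorohod $J_1$ topology is also Polish, so by Prokhorov's theorem relative compactness of the laws $\{\PP\circ X_n^{-1}\}$ on $D_E[0,\infty)$ is equivalent to their tightness as a family of Borel probability measures. Hence it suffices to prove that conditions (a) and (b) together are equivalent to the existence, for each $\eta>0$, of a compact set $K_\eta\subset D_E[0,\infty)$ with $\liminf_n\PP(X_n\in K_\eta)\geq 1-\eta$.

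For the direction $(a)\land(b)\Rightarrow$ tightness, I would fix $\eta>0$, enumerate the rationals $\{t_k\}_{k\geq 1}$, and apply (a) at each $t_k$ with tolerance $\eta\cdot 2^{-k-2}$ to obtain compact sets $\Gamma_{\eta,t_k}\subset E$ with $\liminf_n\PP(X_n(t_k)\in\Gamma_{\eta,t_k})\geq 1-\eta\cdot 2^{-k-2}$. Next, for each integer $T\geq 1$ I would apply (b) with tolerance $\eta\cdot 2^{-T-2}$ to pick $\delta_T>0$ with $\limsup_n\PP(\omega'(X_n,\delta_T,T)\geq 2^{-T})\leq \eta\cdot 2^{-T-2}$. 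Let $A$ be the set of $x\in D_E[0,\infty)$ such that $x(t_k)\in\Gamma_{\eta,t_k}$ for all $k$ and $\omega'(x,\delta_T,T)\leq 2^{-T}$ for every $T$. The Skorohod-space analog of the Arzelà-Ascoli theorem implies that $\overline{A}$ is compact in $D_E[0,\infty)$, and a union bound over the countably many exceptional events shows $\liminf_n\PP(X_n\in\overline{A})\geq 1-\eta$, giving tightness.

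For the reverse direction relative compactness $\Rightarrow(a)\land(b)$, Prokhorov provides for each $\eta>0$ a compact $K_\eta\subset D_E[0,\infty)$ with $\liminf_n\PP(X_n\in K_\eta)\geq 1-\eta$. The forward part of the Arzelà-Ascoli theorem applied to $K_\eta$ yields: (i) for every rational $t\geq 0$ the slice $\{x(t):x\in K_\eta\}$ is relatively compact in $E$, whose closure serves as $\Gamma_{\eta,t}$ and yields (a); (ii) for every $T>0$, $\lim_{\delta\downarrow 0}\sup_{x\in K_\eta}\omega'(x,\delta,T)=0$, so given $\eta>0$ and $T>0$ one can pick $\delta>0$ so small that $\omega'(x,\delta,T)<\eta$ uniformly on $K_\eta$, and condition (b) follows from $\PP(X_n\in K_\eta)\geq 1-\eta$.

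The principal obstacle is the Skorohod-space Arzelà-Ascoli characterization itself: proving that $A\subset D_E[0,\infty)$ has compact closure iff the pointwise slices are relatively compact in $E$ on a dense set of times and the $\omega'$-modulus collapses uniformly as $\delta\downarrow 0$. Unlike the $C_E[0,\infty)$ case, the presence of jumps forces one to work with the Skorohod time-change metric, and the usual equicontinuity subsequence extraction must be replaced by an argument that constructs, along any sequence in $A$, a common partition on which all paths are nearly constant and a time-change that aligns their jumps; the uniform control provided by $\omega'$ is precisely what makes this alignment possible. Once this topological characterization is in hand, both directions of the theorem become a mechanical translation via Prokhorov.
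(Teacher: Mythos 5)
This statement is not proved in the paper at all: it is quoted verbatim from Ethier and Kurtz (Theorem 7.2 of Chapter 3 in the cited reference) and used as a black box in the proof of Proposition~\ref{prop:fluid-multiclass}, so there is no in-paper argument to compare yours against. Your outline is the standard textbook route and is essentially the proof given in the cited source: Polishness of $D_E[0,\infty)$, Prokhorov's theorem to convert relative compactness into tightness, and the Arzel\`a--Ascoli-type characterization of compact subsets of $D_E[0,\infty)$ (Theorem 6.3 of Chapter 3 in Ethier--Kurtz, which is indeed stated with containment only at rational times together with uniform decay of $\omega'$). You correctly identify that this compactness characterization carries all the real content and you do not prove it, so the proposal is a reduction rather than a complete proof.

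Two points in the sufficiency direction are glossed over and deserve care. First, your set $A$ controls $\omega'(x,\delta_T,T)$ only at a single $\delta_T$ for each $T$, whereas the compactness criterion requires $\lim_{\delta\downarrow 0}\sup_{x\in A}\omega'(x,\delta,T)=0$ for each fixed $T$; this is recoverable, but only by using monotonicity of $\omega'$ in both $\delta$ and $T$ and letting the horizon $T'\geq T$ tend to infinity so that the bound $2^{-T'}$ vanishes. Second, and more seriously, the concluding ``union bound over the countably many exceptional events'' is not legitimate as stated: each exceptional event is controlled only through a $\liminf$ or $\limsup$ in $n$, and $\limsup_n\sum_k a_k^n\leq\sum_k\limsup_n a_k^n$ fails for infinite sums (take $a_k^n=\mathds{1}_{(k=n)}$). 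One must either truncate to finitely many constraints per tolerance level and enlarge the compact set to absorb finitely many initial indices $n$ (using that each individual law on the Polish space $D_E[0,\infty)$ is tight), or follow the bookkeeping in Ethier--Kurtz, which works with finitely many rational grid points per horizon. As written, this step is a genuine gap in the sketch, though a standard and fixable one.
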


\section{Find a Desired Partition $(\mathbf{w},\mathbf{v})$}\label{sec:find-wvp-subcritical}
For convenience of discussion and notation, we assume that $f$ is continuous and $\xi=1$ in this section.
The inputs of Algorithm~\ref{algo: to find subcritical} are the arrival rate function $\lambda(\cdot)$, the service rate function $f(\cdot,\cdot)$, and the maximal allowed workload $\rho^*\in(0,1)$ for per server. There are two possible outputs. One is that we can get a specific pair of partitions $(\mathbf{w},\mathbf{v})$ and its corresponding stochastic matrices $\mathbf{p}$ such that \eqref{eq:P-subcritical-infinite} holds. The other one is that no matter how we make the partition, the workload of some servers is greater than $\rho^*$, which implies that some servers suffer from heavy workloads. These results are formalized in the next proposition.

\begin{algorithm}
\caption{To find a subcritical regime}\label{algo: to find subcritical}
\KwData{$\lambda(\cdot)$, $f(\cdot,\cdot)$ and $\rho^*$;}
$\text{flag} \gets -1$\;
$n \gets 1$\;
\While{$\text{flag} = -1$}{
consider the partition $0<1/2^n<2/2^n<\cdots<(2^n-1)/2^n<2^n/2^n=1$\;
$\lambda_h \gets \frac{1}{\xi}\int_{(h-1)/2^n}^{h/2^n}\lambda(x)dx$, $h\in[2^n]$\;
$\hat{\mu}_{h,m} \gets \max_{(x,y)\in[(h-1)/2^n,h/2^n)\times [(m-1)/2^n,m/2^n)}f(x,y)$, $h,m\in[2^n]$\;
$\mu^*_{h,m} \gets \min_{(x,y)\in[(h-1)/2^n,h/2^n)\times [(m-1)/2^n,m/2^n)}f(x,y)$, $h,m\in[2^n]$\;
Check the polyhedra $P^n_1$ and $P^n_2$ are empty or not, where
$$P^n_1\coloneqq\{\mathbf{p}=(p_{h,m},h\in[2^n],m\in[2^n])\in[0,1)^{2^n\times 2^n}: \mathbf{p}\text{ satisfies }\eqref{eq: rho_1}\},$$
\begin{equation}\label{eq: rho_1}
    \begin{split}
        \sum_{m\in[2^n]}p_{h,m}=&1,\quad \forall h\in[2^n],\\
        \bar{\rho}^n_m(\mathbf{p})\coloneqq\sum_{h\in[2^n]}\frac{p_{h,m}\lambda_h }{\hat{\mu}_{h,m}/2^n}<&1,\quad \forall m\in[2^n].
    \end{split}
\end{equation}
$$P^n_2\coloneqq\{\mathbf{p}=(p_{h,m},h\in[2^n],m\in[2^n])\in[0,1)^{2^n\times 2^n}: \mathbf{p}\text{ satisfies }\eqref{eq: rho_2}\},$$
\begin{equation}\label{eq: rho_2}
    \begin{split}
        \sum_{m\in[2^n]}p_{h,m}=&1,\quad \forall h\in[2^n],\\
        \rho^n_m(\mathbf{p})\coloneqq\sum_{h\in[2^n]}\frac{p_{h,m}\lambda_h }{\mu^*_{h,m}/2^n}<&1,\quad \forall m\in[2^n].
    \end{split}
\end{equation}

\uIf{$P^n_1\neq \O$}{
$\bar{\rho}^n\coloneqq\min_{\mathbf{p}\in P^n_1}\max_{m\in[2^n]}\bar{\rho}^n_m(\mathbf{p})$
\;}
\Else{$\bar{\rho}^n\coloneqq1$\;
flag $\gets$ 0 \;}

\uIf{$P^n_2\neq \O$}{
$\rho^n\coloneqq\min_{\mathbf{p}\in P^n_2}\max_{m\in[2^n]}\rho^n_m(\mathbf{p})$
\;}
\Else{$\rho^n\coloneqq1$\;}

\uIf{$\bar{\rho}^n< \rho^*$ and $\rho^n\ge 1$}{
    $n \gets n+1$\; 
  }
  \uElseIf{$\bar{\rho}^n\ge\rho^*$}{
    flag$\gets$0\;
  }
  \Else{
    flag $\gets$ 1\;
    $\mathbf{w}=(0,1/2^n,2/2^n,...,1)$\;
    $\mathbf{v}=(0,1/2^n,2/2^n,...,1)$\;
    $\mathbf{p}=\argmin_{\mathbf{p}\in P^n_2}\max_{m\in[2^n]}\rho^n_m(\mathbf{p})$
  }

}
\uIf{flag=0}{
    return ``Some servers suffer from heavy workload''\;
  }
  \Else{return $(\mathbf{w},\mathbf{v},\mathbf{p})$\;}
\end{algorithm}

\begin{prop}\label{prop:algo-find-subcritical}
Given an $f$-sequence $\{G^N\}_{N\in\N}$ and the maximum allowed workload $\rho^*$. Algorithm~\ref{algo: to find subcritical} will either return a tuple  $(\mathbf{w},\mathbf{v},\mathbf{p})$ such that \eqref{eq:P-subcritical-infinite} holds, or return ``Some servers suffer from heavy workload'' which implies that there is no partition $(\mathbf{w}.\mathbf{v})$ making maximum workload per server less than $\rho^*$.
\end{prop}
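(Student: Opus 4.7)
The plan is to handle the algorithm's two return branches separately, reducing the negative output to a \emph{lower bound lemma} comparing $\bar{\rho}^n$ to the load of an arbitrary partition, and establishing termination via monotonicity and convergence of the sequences $\{\bar{\rho}^n\}$ and $\{\rho^n\}$.

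\textbf{Correctness of the affirmative branch.} If the algorithm exits through the \textbf{else} clause at level $n$, then by construction $\rho^n < 1$, and $\rho^n = \max_m \rho^n_m(\mathbf{p})$ for the returned matrix, so the dyadic triple $(\mathbf{w},\mathbf{v},\mathbf{p})$ at level $n$ satisfies~\eqref{eq:P-subcritical-infinite} directly.

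\textbf{Correctness of the negative branch (main content).} When flag is set to $0$ at level $n$ we have $\bar{\rho}^n \ge \rho^*$; the key is to establish the lower bound lemma: for \emph{any} partition $(\mathbf{w}',\mathbf{v}')$ of $[0,1)$ and \emph{any} stochastic matrix $\mathbf{p}'$ with unit row sums,
\[
\bar{\rho}^n \;\le\; \max_{m'} \rho_{m'}(\mathbf{w}',\mathbf{v}',\mathbf{p}').
\]
I would prove this by introducing the continuous routing density $\alpha(x,y):=\lambda(x)\,p'_{h,m'}/(v'_{m'}-v'_{m'-1})$ for $x$ in the $h$-th block of $\mathbf{w}'$ and $y$ in the $m'$-th block of $\mathbf{v}'$. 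Then $\int_0^1 \alpha(x,y)\,dy=\lambda(x)$, and the bound $f(x,y)\ge \mu^*_{h,m'}$ on each $(\mathbf{w}',\mathbf{v}')$ block yields the pointwise estimate $\int_0^1 \alpha(x,y)/f(x,y)\,dx \le \rho_{m'}(\mathbf{w}',\mathbf{v}',\mathbf{p}')$ for every $y$ in the $m'$-th block. Transferring to the dyadic side, define
\[
p^{(n)}_{h,m} \;:=\; \frac{1}{\lambda_h}\!\int_{\frac{h-1}{2^n}}^{\frac{h}{2^n}}\!\int_{\frac{m-1}{2^n}}^{\frac{m}{2^n}}\! \alpha(x,y)\,dy\,dx,
\]
verify unit row sums from $\int\alpha(x,y)\,dy=\lambda(x)$, and use $f(x,y)\le \hat{\mu}_{h,m}$ on each dyadic block to obtain
\[
\bar{\rho}^n_m(\mathbf{p}^{(n)}) \;\le\; 2^n\!\int_{\frac{m-1}{2^n}}^{\frac{m}{2^n}}\!\int_0^1\! \frac{\alpha(x,y)}{f(x,y)}\,dx\,dy \;\le\; \max_{m'} \rho_{m'}(\mathbf{w}',\mathbf{v}',\mathbf{p}').
\]
Maximizing over $m$ and minimizing over dyadic $\mathbf{p}^{(n)}$ gives the lemma, and combined with $\bar{\rho}^n\ge \rho^*$ it says every partition has load at least $\rho^*$, which is exactly the content of the negative output.

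\textbf{Termination.} I would show $\{\bar{\rho}^n\}$ is non-decreasing by coarsening any $\mathbf{p}^{(n+1)}$ via $p^{(n)}_{h,m}:=\frac{1}{\lambda^{(n)}_h}\sum_{h'\subset h,\,m'\subset m}\lambda^{(n+1)}_{h'}\,p^{(n+1)}_{h',m'}$ and applying $\hat{\mu}^{(n)}_{h,m}\ge \hat{\mu}^{(n+1)}_{h',m'}$ on subblocks, with an analogous argument showing $\{\rho^n\}$ is non-increasing. Continuity of $f$ on the compact set $[0,1]^2$ gives uniform continuity, and the uniform lower bound $f\ge \mu^o>0$ from Assumption~\ref{ass:f-sequence}(ii) forces $\rho^n-\bar{\rho}^n\to 0$, so the two monotone sequences share a common limit $L^*$. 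Since $\rho^*\in(0,1)$: if $L^*>\rho^*$, then $\bar{\rho}^n\ge \rho^*$ eventually, triggering flag$=0$; if $L^*\le \rho^*$, then $\rho^n\to L^*<1$ so $\rho^n<1$ eventually while $\bar{\rho}^n\le L^*\le \rho^*$, triggering the \textbf{else} branch with flag$=1$. The main obstacle is setting up the lower bound lemma correctly; the technicality that dyadic cell boundaries need not align with those of $(\mathbf{w}',\mathbf{v}')$ is bypassed by routing through the shared continuous density $\alpha$, which avoids any boundary approximation.
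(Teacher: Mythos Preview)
Your treatment of the negative branch is correct and in fact cleaner than the paper's. The paper proves the equivalent statement (its Lemma~C.3) by an explicit four-case construction of a dyadic matrix $\mathbf{p}'$ from a given $(\mathbf{w},\mathbf{v},\mathbf{p})$, splitting on whether a dyadic cell straddles a partition point $w_h$ or $v_m$; it then needs uniform continuity of $f$ to control the resulting $\mu^*$-terms. Your route through the continuous density $\alpha(x,y)$ sidesteps the boundary bookkeeping entirely and, because you compare against $\hat{\mu}_{h,m}=\max f$ rather than $\mu^*_{h,m}=\min f$, the inequality $f\le \hat{\mu}_{h,m}$ holds pointwise with no continuity needed. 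So for the negative branch your argument is both shorter and requires less.

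However, there is a genuine gap in your termination argument. You write ``the uniform lower bound $f\ge\mu^o>0$ from Assumption~\ref{ass:f-sequence}(ii)'', but that assumption does \emph{not} say $f\ge\mu^o$ everywhere; it only says that for each $x$ the set $\{y:f(x,y)\ge\mu^o\}$ has positive Lebesgue measure. Thus $f$ may vanish (or be arbitrarily small) on parts of $[0,1)^2$, and on dyadic blocks where $\mu^*_{h,m}$ is near zero the gap $1/\mu^*_{h,m}-1/\hat{\mu}_{h,m}$ is not controlled by uniform continuity alone, so your claim $\rho^n-\bar{\rho}^n\to 0$ does not follow from the stated reasoning. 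The paper handles exactly this obstruction in part~(i) of its proof: starting from an optimizer $\bar{\mathbf{p}}^*$ for $\bar{\rho}^n$, it \emph{redistributes} the mass sitting on blocks with $\mu^*_{h,m}<x$ to the single column $M^h_1:=\arg\max_m\hat{\mu}_{h,m}$ in each row, which by the correct reading of Assumption~\ref{ass:f-sequence}(ii) satisfies $\hat{\mu}_{h,M^h_1}\ge\mu^o$. This yields a feasible $\mathbf{p}$ for the $\mu^*$-LP with $\rho^n_m(\mathbf{p})\le \bar{\rho}^n+\varepsilon\bar{\rho}^n/x^2+\bar{\rho}^n x/(\mu^o-\varepsilon)$, and a suitable joint choice of $x$ and $\varepsilon$ then forces $\rho^n<1$ (and in fact $\rho^n-\bar{\rho}^n\to 0$). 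Your termination scheme is salvageable, but it needs this redistribution step in place of the nonexistent uniform lower bound.
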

For Proposition~\ref{prop:algo-find-subcritical}, we start to show the following two lemmas. The first lemma shows the monotonicity of $\rho^n$ and $\Bar{\rho}^n$ (defined in Algorithm~\ref{algo: to find subcritical}) with respect to $n$.
\begin{lemma}\label{lem:monotonicity-rho}
    Consider the sequences $\{\rho^n\}_n$ and $\{\bar{\rho}^n\}_n$. For all $n\in\N$, $\rho^n\geq \rho^{n+1}$ and $\Bar{\rho}^n\leq \Bar{\rho}^{n+1}$.
\end{lemma}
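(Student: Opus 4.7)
The plan is to exploit two elementary monotonicity facts under dyadic refinement from level $n$ to level $n+1$: (i) the arrival rates split additively, $\lambda_{2h-1}^{n+1}+\lambda_{2h}^{n+1}=\lambda_h^n$ for every $h\in[2^n]$, since $\lambda_h^n=\int_{(h-1)/2^n}^{h/2^n}\lambda(x)dx$; and (ii) refining a block can only increase its minimum and decrease its maximum of $f$, so $\mu^{*,n+1}_{h',m'}\ge \mu^{*,n}_{h,m}$ and $\hat\mu^{n+1}_{h',m'}\le \hat\mu^n_{h,m}$ whenever $h'\in\{2h-1,2h\}$ and $m'\in\{2m-1,2m\}$. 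For each direction of the lemma I will construct an explicit feasible stochastic matrix on the other level, compute the resulting per-column load, and optimize over the choice of optimizer.

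For $\rho^n\ge \rho^{n+1}$, if $P^n_2=\emptyset$ then by convention $\rho^n=1\ge\rho^{n+1}$, so assume $P^n_2\ne\emptyset$ and take any near-optimal $\mathbf{p}^n\in P^n_2$. I would \emph{lift} it to level $n+1$ by setting $p^{n+1}_{h',m'}:=p^n_{h,m}/2$ whenever $(h',m')$ lies in the $(h,m)$-sub-block. Row sums stay equal to $1$ because each column of $\mathbf{p}^n$ splits into two copies of weight $p^n_{h,m}/2$. Using (i), the upper bound $\mu^{*,n+1}_{h',m'}\ge \mu^{*,n}_{h,m}$, and the identity $\mu^{*,n}_{h,m}/2^n = 2\cdot\mu^{*,n}_{h,m}/2^{n+1}$, a direct computation gives
\[
\rho^{n+1}_{m'}(\mathbf{p}^{n+1})\le \sum_{h}\frac{(p^n_{h,m}/2)(\lambda^{n+1}_{2h-1}+\lambda^{n+1}_{2h})}{\mu^{*,n}_{h,m}/2^{n+1}}=\rho^n_m(\mathbf{p}^n)
\]
for every $m'$ lying in the sub-block of $m$. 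Taking the max over $m'$ and then letting $\mathbf{p}^n$ approach the minimizer yields $\rho^{n+1}\le \rho^n$.

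For $\bar\rho^n\le \bar\rho^{n+1}$ I go the other direction. If $P^{n+1}_1=\emptyset$ then $\bar\rho^{n+1}=1\ge \bar\rho^n$; otherwise take a near-optimal $\mathbf{p}^{n+1}\in P^{n+1}_1$ and \emph{aggregate} it by the weighted average
\[
p^n_{h,m}:=\frac{1}{\lambda^n_h}\sum_{h'\in\{2h-1,2h\}}\sum_{m'\in\{2m-1,2m\}}p^{n+1}_{h',m'}\lambda^{n+1}_{h'}.
\]
Row sums collapse to $1$ by (i). Now using the dual bound $\hat\mu^n_{h,m}\ge \hat\mu^{n+1}_{h',m'}$ and the same scaling identity (but with the factor $1/2$ appearing in the denominator this time) one obtains
\[
\bar\rho^n_m(\mathbf{p}^n)\le \frac{1}{2}\Bigl(\bar\rho^{n+1}_{2m-1}(\mathbf{p}^{n+1})+\bar\rho^{n+1}_{2m}(\mathbf{p}^{n+1})\Bigr)\le \max_{m'\in[2^{n+1}]}\bar\rho^{n+1}_{m'}(\mathbf{p}^{n+1}).
\]
Taking the max over $m$ on the left and optimizing over $\mathbf{p}^{n+1}$ on the right yields $\bar\rho^n\le \bar\rho^{n+1}$.

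There is no real obstacle here; the only subtlety is to keep the scaling factors $1/2^n$ versus $1/2^{n+1}$ consistent in the denominators (which is exactly why the factor $\tfrac12$ emerges in the aggregation step and cancels in the lifting step), and to handle the degenerate cases $P^n_1=\emptyset$ or $P^{n+1}_2=\emptyset$ via the algorithm's convention that sets the corresponding $\bar\rho^n$ or $\rho^{n+1}$ to $1$. A small sanity check is that the lifting construction preserves feasibility of $P^n_2$ in the sense that it produces a matrix in $P^{n+1}_2$ whenever $\mathbf{p}^n\in P^n_2$, and symmetrically the aggregation preserves membership in $P^n_1$; this is exactly what drives the inheritance of emptiness/non-emptiness needed in the edge cases.
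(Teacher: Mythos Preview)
Your proof is correct and follows essentially the same approach as the paper: for $\rho^n\ge\rho^{n+1}$ you lift an optimizer $\mathbf{p}^n$ to level $n+1$ via $p^{n+1}_{h',m'}=p^n_{h,m}/2$, exactly as the paper does, and your load computation matches theirs line for line. For $\bar\rho^n\le\bar\rho^{n+1}$ the paper merely says the proof ``is similar'', whereas you spell out the natural dual construction (aggregating $\mathbf{p}^{n+1}$ down to level $n$ by the $\lambda$-weighted average) and verify the averaging inequality $\bar\rho^n_m\le\tfrac12(\bar\rho^{n+1}_{2m-1}+\bar\rho^{n+1}_{2m})$; this is presumably what the paper intends. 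Your explicit handling of the empty-polyhedron conventions is also a welcome addition.
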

\begin{proof}
We only show $\rho^n\geq \rho^{n+1}$ for all $n\in\N$, since the proof of $\Bar{\rho}^n\leq \Bar{\rho}^{n+1}$ is similar.
    Consider any $n\in\N$. $(\mathbf{w}^n,\mathbf{v}^n)=(0<1/2^n<...<2^n/2^n=1,0<1/2^n<...<2^n/2^n=1)$, $\rho^n=\rho(\mathbf{w}^n,\mathbf{v}^n)$ is the optimal value with the optimal solution $\mathbf{p}^*=(p^*_{h,m},h\in[2^n],m\in[2^n])$  of the following linear programming (LP):
    \begin{align}\label{eq:lp-w-v}
        \min_{\mathbf{p}\in[0,1)^{2^n\times 2^n}}&\quad  \rho\nonumber\\
        s.t.\quad \sum_{m\in[2^n]}p_{h,m}&=1,\quad \forall h\in[2^n],\\
        \sum_{h\in[2^n]}\frac{p_{h,m}\lambda_h}{\mu^*_{h,m}/2^n}&\leq \rho,\quad \forall m\in[2^n],\nonumber
    \end{align}
    where $\lambda_h$ and $\mu^*_{h,m}$ are defined in Algorithm~\ref{algo: to find subcritical}. For $n+1$, $(\mathbf{w}^{n+1},\mathbf{v}^{n+1})=(0<1/2^{n+1}<1/2^n<3/2^{n+1}...<(2^{n+1}-1)/2^{n+1}<2^n/2^n=1,0<1/2^{n+1}<1/2^n<3/2^{n+1}...<(2^{n+1}-1)/2^{n+1}<2^n/2^n=1)$, $\rho^{n+1}=\rho(\mathbf{w}^{n+1},\mathbf{v}^{n+1})$ is the optimal value of the following LP:
    \begin{align}\label{eq:lp-w'-v'}
        \min_{\mathbf{p}\in[0,1)^{2^{n+1}\times 2^{n+1}}}&\quad  \rho'\nonumber\\
        s.t.\quad \sum_{m'\in[2^{n+1}]}p_{h',m'}&=1,\quad \forall h'\in[2^{n+1}],\\
        \sum_{h'\in[2^{n+1}]}\frac{p_{h',m'}\lambda'_{h'}}{\mu'^{*}_{h',m'}/2^{n+1}}&\leq \rho',\quad \forall m'\in[2^{n+1}],\nonumber
    \end{align}
    where $\lambda'_{h'}$ and $\mu'^*_{h',m'}$ are defined in Algorithm~\ref{algo: to find subcritical}.
    We construct a feasible solution $\mathbf{p}'=(p'_{h',m'},h'\in[2^{n+1}],m'\in[2^{n+1}])$ of the LP~\eqref{eq:lp-w'-v'} as follows: 
    for any $h'\in[2^{n+1}]$ and $m'\in[2^{n+1}]$, $p'_{h',m'}=\frac{1}{2}p^*_{\lfloor (h'+1)/2\rfloor,\lfloor (m'+1)/2\rfloor}$.
    Hence, for any $m'\in[2^{n+1}]$ with $m=\lfloor(m'+1)/2\rfloor$, we have
    \begin{equation*}
        \begin{split}
            \sum_{h'\in[2^{n+1}]}\frac{\lambda'_{h'}p'_{h',m'}}{\mu'^*_{h',m'}/2^{n+1}}&=\sum_{h\in[2^n]}\sum_{h'=2h-1}^{2h}\frac{\lambda'_{h'} p^*_{h,m}/2}{\mu'^*_{h',m'}/2^{n+1}}\\
            &=\sum_{h\in[2^n]}\sum_{h'=2h-1}^{2h}\frac{\lambda'_{h'}p^*_{h,m}}{\mu'^*_{h',m'}/2^{n}}\leq \sum_{h\in[H]}\frac{\lambda_{h}p^*_{h,m}}{\mu^*_{h,m}/2^{n}}= \rho(\mathbf{w}^n,\mathbf{v}^n), \\
        \end{split}
    \end{equation*}
where the last inequality comes from the fact that if $h=\lfloor (h'+1)/2\rfloor$ and $m=\lfloor (m'+1)/2\rfloor$,
$ \mu'^*_{h',m'}\geq \mu^*_{h,m},$
%$$\min_{(x,y)\in[(h'-1)/2^{n+1},h'/2^{n+1})\times [(m'-1)/2^{n+1},m'/2^{n+1})}f(x,y)\geq \min_{(x,y)\in[(h-1)/2^{n},h/2^{n})\times [(m-1)/2^{n},m/2^{n})}f(x,y),$$
and $\sum_{h'=2h-1}^{2h} \lambda'_{h'}=\lambda_h$.
The constructed solution $\mathbf{p}'$ may not be the optimal solution to the LP~\eqref{eq:lp-w'-v'} so $\rho(\mathbf{w}^{n+1},\mathbf{v}^{n+1})\leq\rho(\mathbf{w}^n,\mathbf{v}^n)$.
\end{proof}

The next lemma states that Algorithm~\ref{algo: to find subcritical} can always find a desired tuple $(\mathbf{w}',\mathbf{v}',\mathbf{p}')$, if the $f$-sequence is in a certain $(\mathbf{w},\mathbf{v},\mathbf{p})$-subcritical regime with $\rho(\mathbf{w},\mathbf{v},\mathbf{p})< \rho^*$. The proof of Lemma~\ref{lem:find-subcritical} is mainly based on the observation that if $\rho(\mathbf{w},\mathbf{v},\mathbf{p})<\rho^*$, then for a finer partition $(\mathbf{w}',\mathbf{v}')$, we can construct a matrix $\mathbf{p}\in[0,1)^{H\times M}$ with unit row sums based on $\mathbf{p}$ such that $\rho(\mathbf{w}',\mathbf{v}',\mathbf{p}')<\rho^*$.
\begin{lemma}\label{lem:find-subcritical}
Assume that the $f$-sequence $\{G^N\}_{N\in\N}$ is in a certain $(\mathbf{w},\mathbf{v},\mathbf{p})$-subcritical regime with $\rho(\mathbf{w},\mathbf{v},\mathbf{p})< \rho^*$. Then, Algorithm~\ref{algo: to find subcritical} will return a tuple $(\mathbf{w}',\mathbf{v}',\mathbf{p}')$ such that $\rho(\mathbf{w}',\mathbf{v}',\mathbf{p}')<\rho^*$. 
\end{lemma}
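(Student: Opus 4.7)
The plan is to show that the hypothesis $\rho(\mathbf{w},\mathbf{v},\mathbf{p})<\rho^*$ forces both $\bar{\rho}^n<\rho^*$ for every $n$ and $\rho^n<\rho^*$ for all sufficiently large $n$. Together with Lemma~\ref{lem:monotonicity-rho} this rules out the flag$=0$ branch and drives the algorithm to exit through the flag$=1$ branch at some level $n^\star$ with $\rho^{n^\star}<\rho^*$, so the returned tuple satisfies the required bound.

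The key technical step is to lift the given $\mathbf{p}\in[0,1)^{H\times M}$ to a feasible matrix $\mathbf{p}^n\in P^n_2$ on the dyadic partition in a way that approximately preserves the per-server load. For each dyadic row $h'\in[2^n]$, pick $h(h')\in[H]$ such that $\big[(h'-1)/2^n,h'/2^n\big)$ is contained in (or, for at most $H$ boundary rows, overlaps most with) $[w_{h(h')-1},w_{h(h')})$. For every pair $(m',m)$ with $m'\in[2^n]$ and $m\in[M]$, set $\alpha^n_{m',m}:=2^n\big|[(m'-1)/2^n,m'/2^n)\cap[v_{m-1},v_m)\big|\in[0,1]$ and define
\[
 p^n_{h',m'}\;:=\;\sum_{m\in[M]} p_{h(h'),m}\,\alpha^n_{m',m}.
\]
Since $\sum_{m'}\alpha^n_{m',m}=1$ for each $m$, row sums equal $1$ and $\mathbf{p}^n$ is admissible.

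Next I will show that $\max_{m'}\rho^n_{m'}(\mathbf{p}^n)\to\rho(\mathbf{w},\mathbf{v},\mathbf{p})$ as $n\to\infty$. Since $f$ is continuous on $[0,1]^2$ it is uniformly continuous, with modulus $\omega(\delta)\downarrow0$ as $\delta\downarrow0$; hence for every dyadic cell lying entirely inside a single original cell $[w_{h-1},w_h)\times[v_{m-1},v_m)$ we have $|\mu^{*,n}_{h',m'}-\mu^*_{h,m}|\leq\omega(2^{-n})$. The total mass of the boundary dyadic cells (those straddling two adjacent original cells) is $O((H+M)/2^n)\to0$, and their contribution is controlled using the uniform lower bound on $f$ (wherever $f>0$) supplied by Assumption~\ref{ass:f-sequence}(ii) together with this vanishing measure. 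A direct computation then yields $\rho^n_{m'}(\mathbf{p}^n)\to\rho_m(\mathbf{w},\mathbf{v},\mathbf{p})$ for all non-boundary $m'$ in original column $m$, so the max converges to $\rho(\mathbf{w},\mathbf{v},\mathbf{p})<\rho^*$. Because $\rho^n\leq\max_{m'}\rho^n_{m'}(\mathbf{p}^n)$, one gets $\rho^n<\rho^*$ for all large $n$.

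Finally, the entrywise inequality $\hat{\mu}_{h,m}\geq\mu^*_{h,m}$ gives $\bar{\rho}^n_m(\mathbf{p})\leq\rho^n_m(\mathbf{p})$ for every $\mathbf{p}$ and every $n$, whence $\bar{\rho}^n\leq\rho^n$; combined with the monotonicity $\bar{\rho}^n\leq\bar{\rho}^{n+1}$ from Lemma~\ref{lem:monotonicity-rho}, this forces $\bar{\rho}^n<\rho^*$ for every $n$. The flag$=0$ branch is therefore never entered, and the algorithm must exit via flag$=1$ at some $n^\star$ with $\rho^{n^\star}<\rho^*$, delivering the claim. The main obstacle is the approximation argument in the second step: dyadic partitions generally do not refine $(\mathbf{w},\mathbf{v})$, so the $O(H+M)$ boundary dyadic cells at each level must be handled separately, and the uniform modulus of continuity of $f$ together with their vanishing total measure is what makes the load mismatch between $\rho^n_{m'}(\mathbf{p}^n)$ and $\rho_m(\mathbf{w},\mathbf{v},\mathbf{p})$ tend to zero.
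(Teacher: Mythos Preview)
Your strategy matches the paper's: lift $\mathbf{p}$ to a matrix on the dyadic partition, use uniform continuity of $f$ to control the per-column loads, and conclude $\rho^n<\rho^*$ for all large $n$. The paper handles dyadic rows that straddle an original $w_h$ by splitting with arrival-rate weights (its Cases~2 and~4), while you assign each such row wholesale to one side and treat the at-most-$H$ mismatches as negligible; either choice is workable in principle.

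There are, however, two concrete errors in the execution. First, your normalization is inverted: with $\alpha^n_{m',m}:=2^n\big|[(m'-1)/2^n,m'/2^n)\cap[v_{m-1},v_m)\big|$ one has $\sum_{m}\alpha^n_{m',m}=1$ for each $m'$, but $\sum_{m'}\alpha^n_{m',m}=2^n(v_m-v_{m-1})$, so $\sum_{m'}p^n_{h',m'}=\sum_m p_{h(h'),m}\,2^n(v_m-v_{m-1})\neq 1$ in general and $\mathbf{p}^n$ is not admissible. The correct weight is $\alpha^n_{m',m}=\big|[(m'-1)/2^n,m'/2^n)\cap[v_{m-1},v_m)\big|/(v_m-v_{m-1})$, which is exactly the paper's choice in its Cases~1 and~3. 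Second, Assumption~\ref{ass:f-sequence}(ii) does not provide a uniform lower bound on $f$ wherever $f>0$; it only says that for each $x$ the set $\{y:f(x,y)\ge\mu^o\}$ has positive measure, so you cannot bound the boundary-cell denominators $\mu^{*,n}_{h',m'}$ that way. What actually works (and what the paper does) is to use that the finitely many $\mu^*_{h,m}$ appearing with $p_{h,m}>0$ are fixed positive constants, choose $\varepsilon^*$ so small that replacing each such $\mu^*_{h,m}$ by $\mu^*_{h,m}-\varepsilon^*$ still gives load at most $(1+\gamma)\rho(\mathbf{w},\mathbf{v},\mathbf{p})$, and then take $n$ large enough that uniform continuity forces every relevant dyadic minimum to lie within $\varepsilon^*$ of the corresponding $\mu^*_{h,m}$.
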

 
\begin{proof}
It is sufficient to show that for any $\gamma>0$, Algorithm~\ref{algo: to find subcritical} can find a tuple $(\mathbf{w}',\mathbf{v}',\mathbf{p})$ such that $\rho(\mathbf{w}',\mathbf{v}',\mathbf{p}')\le (1+\gamma)\rho(\mathbf{w},\mathbf{v},\mathbf{p})$. 
Suppose that the $f$-sequence $\{G^N\}$ is in the $(\mathbf{w},\mathbf{v},\mathbf{p})$-subcritical regime. The corresponding pair of partitions and the matrix are
$$(\mathbf{w},\mathbf{v})=(0=w_0<w_1<\cdots<w_H=1,0=v_0<v_1<\cdots<v_M=1),$$
and $$\mathbf{p}=(p_{h,m},h\in[H],m\in[M]),$$
respectively.
Recall the definition of $\lambda_h$ and $\mu^*_{h,m}$ in Algorithm~\ref{algo: to find subcritical}.
By the definition of $\rho(\mathbf{w},\mathbf{v},\mathbf{p})$, we have 
$$\sum_{h\in[H]}\frac{p_{h,m}\lambda_h}{(v_m-v_{m-1})\mu^*_{h,m}}\le \rho(\mathbf{w},\mathbf{v},\mathbf{p})<\rho^*,\quad \forall m\in[M].$$
Since the function $f$ is Lipschitz continuous, then for any $\varepsilon>0$, there exists an $\delta(\varepsilon)>0$ such that for any $\delta\leq \delta(\varepsilon)$ and any subintervals $E_1\subseteq[0,1)$, $E_2\subseteq[0,1)$ with $|E_1|\leq \delta$, $|E_2|\leq \delta$,
\begin{align}
    \max_{(x,y)\in E_1\times E_2}f(x,y)-\min_{(x,y)\in E_1\times E_2}f(x,y)\leq \varepsilon.
\end{align}
Since $M$ is finite, then for any fixed $\gamma>0$, there exists an $\varepsilon^*>0$ such that 
$$\sum_{h\in[H]}\frac{p_{h,m}\lambda_h}{(v_m-v_{m-1})(\mu^*_{h,m}-\varepsilon^*)}\leq (1+\gamma)\rho(\mathbf{w},\mathbf{v},\mathbf{p})< (1+\gamma)\rho^*,\quad \forall m\in[M].$$
Fix $n$ such that $1/2^n < \delta(\varepsilon^*)$, $1/2^n < \min_{h\in[H]} (w_h-w_{h-1})$ , and $1/2^n < \min_{m\in[M]} (v_m-v_{m-1})$. 
Consider the partition $0<1/2^n<2/2^n<\cdots<2^n/2^n=1$. Next, we construct a matrix $\mathbf{p}'=(p'_{\hat{h},\hat{m}},\hat{h}\in[2^n],\hat{m}\in[2^n])\in[0,1)^{2^n\times 2^n}$ with unit row sums satisfying \eqref{eq:P-subcritical-infinite}. Fix any $\hat{h}\in[2^n]$ and $\hat{m}\in[2^n]$. Based on the situation if $ [(\hat{h}-1)/2^n,\hat{h}/2^n)$ (resp. $ [(\hat{m}-1)/2^n,\hat{m}/2^n)$) contains any $ w_h$ (resp. $ v_m$),  we discuss the following four cases:

Case 1: $[(\hat{h}-1)/2^n,\hat{h}/2^n)$ does not contain any $w_h$, $h\in[H]$, and $[(\hat{m}-1)/2^n,\hat{m}/2^n)$ does not contain any $v_m$, $m\in[M]$. Since $1/2^n < \min_{h\in[H]} (w_h-w_{h-1})$ and $1/2^n<\min_{m\in[M]} (v_m-v_{m-1})$, then $[(\hat{h}-1)/2^n,\hat{h}/2^n)\times [(\hat{m}-1)/2^n,\hat{m}/2^n)$ is a set of $[w_{h-1},w_h)\times [v_{m-1},v_m)$ for certain $h\in[H]$ and $m\in[M]$. Let $$p'_{\hat{h},\hat{m}}=\frac{p_{h,m}}{2^n(v_m-v_{m-1})}.$$

Case 2: $[(\hat{h}-1)/2^n,\hat{h}/2^n)$ contains one of $w_h$, $h\in[H]$, and $[(\hat{m}-1)/2^n,\hat{m}/2^n)$ does not contain any $v_m$, $m\in[M]$. W.L.O.G., suppose that $[(\hat{h}-1)/2^n,\hat{h}/2^n)$ contains $w_h$ and $[(\hat{m}-1)/2^n,\hat{m}/2^n)$ is a subinterval of $[v_{m-1},v_m)$ for certain $h\in[H]$ and $m\in[M]$. Let 
$$p'_{\hat{h},\hat{m}}=\frac{\lambda_{\hat{h},1}}{\lambda_{\hat{h},1}+\lambda_{\hat{h},2}}\frac{p_{h,m}}{2^n(v_m-v_{m-1})}+\frac{\lambda_{\hat{h},2}}{\lambda_{\hat{h},1}+\lambda_{\hat{h},2}}\frac{p_{h+1,m}}{2^n(v_m-v_{m-1})},$$ where $\lambda_{\hat{h},1}=\int_{(\hat{h}-1)/2^n}^{w_h}\lambda(x)dx$ and $\lambda_{\hat{h},2}=\int^{\hat{h}/2^n}_{w_h}\lambda(x)dx$.

Case 3: $[(\hat{h}-1)/2^n,\hat{h}/2^n)$ does not contain any $w_h$, $h\in[H]$, and $[(\hat{m}-1)/2^n,\hat{m}/2^n)$ contains one of $v_m$, $m\in[M]$. W.L.O.G., suppose that $[(\hat{h}-1)/2^n,\hat{h}/2^n)$ is a subinterval of $[w_{h-1},w_h)$ and $[(\hat{m}-1)/2^n,\hat{m}/2^n)$ contains $v_m$ for certain $h\in[H]$ and $m\in[M]$. Let $$p'_{\hat{h},\hat{m}}=\frac{p_{h,m}(v_m-(\hat{m}-1)/2^n)}{v_m-v_{m-1}}+\frac{p_{h,m+1}(\hat{m}/2^n-v_m)}{v_{m+1}-v_m}.$$

Case 4: $[(\hat{h}-1)/2^n,\hat{h}/2^n)$ contains one of $w_h$, $h\in[H]$, and $[(\hat{m}-1)/2^n,\hat{m}/2^n)$ contains one of $v_m$, $m\in[M]$. W.L.O.G., suppose that $[(\hat{h}-1)/2^n,\hat{h}/2^n)$ contains $w_h$ and $[(\hat{m}-1)/2^n,\hat{m}/2^n)$ contains $v_m$ for certain $h\in[H]$ and $m\in[M]$. Let 
\begin{equation*}
    \begin{split}
    p'_{\hat{h},\hat{m}}=&\frac{\lambda_{\hat{h},1}}{\lambda_{\hat{h},1}+\lambda_{\hat{h},2}}\Big(\frac{p_{h,m}(v_m-(\hat{m}-1)/2^n)}{v_m-v_{m-1}}+\frac{p_{h,m+1}(\hat{m}/2^n-v_m)}{v_{m+1}-v_m}\Big)\\
    &+\frac{\lambda_{\hat{h},2}}{\lambda_{\hat{h},1}+\lambda_{\hat{h},2}}\Big(\frac{p_{h+1,m}(v_m-(\hat{m}-1)/2^n)}{v_m-v_{m-1}}+\frac{p_{h+1,m+1}(\hat{m}/2^n-v_m)}{v_{m+1}-v_m}\Big),
\end{split}
\end{equation*}
where $\lambda_{\hat{h},1}=\int_{(\hat{h}-1)/2^n}^{w_h}\lambda(x)dx$ and $\lambda_{\hat{h},2}=\int^{\hat{h}/2^n}_{w_h}\lambda(x)dx$.
Next, we need to show that for all $\hat{m}\in[2^n]$,
\begin{equation}\label{eq: lambda'-p'_hm-mu'}
    \sum_{\hat{h}\in[2^n]}\frac{\lambda_{\hat{h}}p'_{\hat{h},\hat{m}}}{(1/2^n)\mu'_{\hat{h},\hat{m}}}<(1+\gamma)\rho^*,
\end{equation}
where $\lambda_{\hat{h}}=\int_{(\hat{h}-1)/2^n}^{\hat{h}/2^n}\lambda(x)dx$ and $\mu'_{\hat{h},\hat{m}}=\min_{(x,y)\in[(\hat{h}-1)/2^n,\hat{h}/2^n)\times [(\hat{m}-1)/2^n,\hat{m}/2^n)}f(x,y)$.
For each $\hat{m}\in[2^n]$, we define the following fours sets:
$$C(\hat{m},1)\coloneqq\{\hat{h}\in[2^n]:(\hat{h},\hat{m})\text{ belongs to case 1}\},$$
$$C(\hat{m},2)\coloneqq\{\hat{h}\in[2^n]:(\hat{h},\hat{m})\text{ belongs to case 2}\},$$
$$C(\hat{m},3)\coloneqq\{\hat{h}\in[2^n]:(\hat{h},\hat{m})\text{ belongs to case 3}\},$$
$$C(\hat{m},4)\coloneqq\{\hat{h}\in[2^n]:(\hat{h},\hat{m})\text{ belongs to case 4}\},$$
so $[2^n]=\cup_{k\in[4]}C(\hat{m},k)$ for any fixed $\hat{m}\in[2^n]$. More precisely, if $\hat{m}\in[2^n]$ such that $[(\hat{m}-1)/2^n,\hat{m}/2^n)$ does not contain any $v_m$, $m\in[M]$, then $[2^n]=C(\hat{m},1) \cup C(\hat{m},2)$; otherwise, $[2^n]=C(\hat{m},3) \cup C(\hat{m},4)$. 
Also, for each $\hat{m}\in[2^n]$ and $h\in[H]$, we define
$$C(\hat{m},h,1)\coloneqq\{\hat{h}\in (C(\hat{m},1)\cup C(\hat{m},3)): [(\hat{h}-1)/2^n,\hat{h}/2^n)\subseteq [w_{h-1},w_h)\},$$
and $$C(\hat{m},h,2)\coloneqq\{\hat{h}\in (C(\hat{m},2)\cup C(\hat{m},4)): [(\hat{h}-1)/2^n,\hat{h}/2^n)\text{ contains }w_h\}.$$
Consider any $\hat{m}\in[2^n]$ such that $[(\hat{m}-1)/2^n,\hat{m}/2^n)$ does not contain any $v_m$ and suppose that $[(\hat{m}-1)/2^n,\hat{m}/2^n)\subseteq[v_{m-1},v_m)$ for certain $m\in[M]$. We have 
\allowdisplaybreaks
\begin{align*}
    &\sum_{\hat{h}\in[2^n]}\frac{\lambda_{\hat{h}}p'_{\hat{h},\hat{m}}}{(1/2^n)\mu'_{\hat{h},\hat{m}}}\\
        =&\sum_{\hat{h}\in C(\hat{m},1)}\frac{\lambda_{\hat{h}} p'_{\hat{h},\hat{m}}}{(1/2^n)\mu'_{\hat{h},\hat{m}}}+\sum_{\hat{h}\in C(\hat{m},2)}\frac{\lambda_{\hat{h}} p'_{\hat{h},\hat{m}}}{(1/2^n)\mu'_{\hat{h},\hat{m}}}\\
        =& \sum_{h\in[H]}\sum_{\hat{h}\in C(\hat{m},h,1)}\frac{\lambda_{\hat{h}} p'_{\hat{h},\hat{m}}}{(1/2^n)\mu'_{\hat{h},\hat{m}}}+\sum_{h\in[H]}\sum_{\hat{h}\in C(\hat{m},h,2)}\frac{\lambda_{\hat{h}} p'_{\hat{h},\hat{m}}}{(1/2^n)\mu'_{\hat{h},\hat{m}}}\\
        \overset{(a)}{\leq}& \sum_{h\in[H]}\sum_{\hat{h}\in C(\hat{m},h,1)}\frac{\lambda_{\hat{h}} p'_{\hat{h},\hat{m}}}{(1/2^n)\mu^*_{h,m}}+\sum_{h\in[H]}\sum_{\hat{h}\in C(\hat{m},h,2)}\frac{\lambda_{\hat{h}} p'_{\hat{h},\hat{m}}}{(1/2^n)(f(w_h,\hat{m}/2^n)-\varepsilon^*)}\\
        =&\sum_{h\in[H]}\sum_{\hat{h}\in C(\hat{m},h,1)}\frac{\lambda_{\hat{h}}}{\mu^*_{h,m}}\frac{p_{h,m}}{(v_m-v_{m-1})}\\
        &+\sum_{h\in[H]}\sum_{\hat{h}\in C(\hat{m},h,2)}\frac{\lambda_{\hat{h}}}{f(w_h,\hat{m}/2^n)-\varepsilon^*}\Big(\frac{\lambda_{\hat{h},1}}{\lambda_{\hat{h},1}+\lambda_{\hat{h},2}}\frac{p_{h,m}}{(v_m-v_{m-1})}+\frac{\lambda_{\hat{h},2}}{\lambda_{\hat{h},1}+\lambda_{\hat{h},2}}\frac{p_{h+1,m}}{(v_m-v_{m-1})}\Big)\\
        \leq &\sum_{h\in[H]}\Big(\sum_{\hat{h}\in C(\hat{m},h,1)}\frac{\lambda_{\hat{h}}}{(\mu^*_{h,m}-\varepsilon^*)}\frac{p_{h,m}}{(v_m-v_{m-1})}+\sum_{\hat{h}\in C(\hat{m},h-1,2)}\frac{\lambda_{\hat{h},2}}{(\mu^*_{h,m}-\varepsilon^*)}\frac{p_{h,m}}{(v_m-v_{m-1})}\\
        &\quad \quad +\sum_{\hat{h}\in C(\hat{m},h,2)}\frac{\lambda_{\hat{h},1}}{(\mu^*_{h,m}-\varepsilon^*)}\frac{p_{h,m}}{(v_m-v_{m-1})}\Big)\\
        \overset{(b)}{=} & \sum_{h\in[H]}\frac{p_{h,m}\lambda_h}{(v_m-v_{m-1})(\mu^*_{h,m}-\varepsilon^*)}< (1+\gamma)\rho^*,
\end{align*}
where $(a)$ is from the definition of $\mu'_{\hat{h},\hat{m}}$ and $\mu^*_{h,m}$ and the property of the Lipschitz continuity of function $f(\cdot,\cdot)$, and $(b)$ is due to the fact that $ \sum_{\hat{h}\in C(\hat{m},h,1)}\lambda_{\hat{h}}+\sum_{\hat{h}\in C(\hat{m},h-1,2)}\lambda_{\hat{h},2}+\sum_{\hat{h}\in C(\hat{m},h,2)}\lambda_{\hat{h},1}=\lambda_h$.
The discussion about the case that $\hat{m}\in[2^n]$ contains certain $v_m$ is similar as above. Hence, \eqref{eq: lambda'-p'_hm-mu'} holds. Since $\gamma>0$ is arbitrary, the desired result holds.

\end{proof}

\begin{proof}[Proof of Proposition~\ref{prop:algo-find-subcritical}]
We need to show two things: (i) if for all $n\in\N$, $\Bar{\rho}^n\leq \rho^*$, then Algorithm~\ref{algo: to find subcritical} can always return a tuple $(\mathbf{w},\mathbf{v},\mathbf{p})$ such that $\rho(\mathbf{w},\mathbf{v},\mathbf{p})<1$. (ii) if for some $n\in\N$, $\bar{\rho}^n\ge\rho^*$, there is no partition $(\mathbf{w},\mathbf{v})$ such that $\rho(\mathbf{w},\mathbf{v})<\rho^*$. 

We prove (i) first. For any fixed $n\in\N$, $\Bar{\rho}^n$ is the optimal value of the following LP:
    \begin{align}\label{ref:lp-hat-mu}
        \min_{\bar{\mathbf{p}}\in[0,1)^{2^n\times 2^n}}&\quad \bar{\rho}\nonumber\\
        s.t.\quad \sum_{m\in[2^n]}\bar{p}_{h,m}&=1,\quad \forall h\in[2^n],\\
        \sum_{h\in[2^n]}\frac{\bar{p}_{h,m}\lambda_h}{\hat{\mu}_{h,m}/2^n}&\leq \bar{\rho},\quad \forall m\in[2^n],\nonumber
    \end{align}
    where $\lambda_h$ and $\hat{\mu}_{h,m}$ are defined in Algorithm~\ref{algo: to find subcritical}. 
    Similarly, $\rho^n$ is the optimal value of the following LP:
    \begin{align}\label{ref:lp-mu*}
        \min_{\mathbf{p}\in[0,1)^{2^n\times 2^n}}&\quad \rho\nonumber\\
        s.t.\quad \sum_{m\in[2^n]}p_{h,m}&=1,\quad \forall h\in[2^n],\\
        \sum_{h\in[2^n]}\frac{p_{h,m}\lambda_h}{\mu^*_{h,m}/2^n}&\leq \rho,\quad \forall m\in[2^n],\nonumber
    \end{align}
    where $\mu^*_{h,m}$ is defined in Algorithm~\ref{algo: to find subcritical}. 
    
    Since the function $f$ is Lipschitz continuous, then for any $\varepsilon>0$, there exists an $\delta(\varepsilon)>0$ such that for any $\delta\leq \delta(\varepsilon)$, the following holds: for any subintervals $E_1\subseteq[0,1)$, $E_2\subseteq[0,1)$ with $|E_1|\leq \delta$, $|E_2|\leq \delta$,
\begin{align}
    \max_{(x,y)\in E_1\times E_2}f(x,y)-\min_{(x,y)\in E_1\times E_2}f(x,y)\leq \varepsilon.
\end{align}
Fix any $\varepsilon>0$, which will be determined later. Consider any $n$ with $2^n\leq \delta(\varepsilon)$. Then for any $(h,m)\in[2^n]\times [2^n]$, we have $ \hat{\mu}_{h,m}-\mu^*_{h,m}\leq \varepsilon$.
Consider an optimal solution $\bar{\mathbf{p}}^*=(\bar{p}^*_{h,m},h\in[2^n],m\in[2^n])$ to \eqref{ref:lp-hat-mu} such that for all $m\in[2^n]$, 
\begin{equation}\label{eq:hat-mu-optimal}
    \sum_{h\in[2^n]}\frac{\bar{p}^*_{h,m}\lambda_h}{\hat{\mu}_{h,m}/2^n}\leq \rho^*.
\end{equation}
Next, we construct a feasible solution $\mathbf{p}=(p_{h,m},h\in[2^n],m\in[2^n])$ to \eqref{ref:lp-mu*}. The main issue here is dealing with terms where $\mu^*_{h,m}$ is small, especially, $\mu^*_{h,m}=0$. Consider any fixed $x>0$ which will be determined later. For each $h\in[2^n]$, let $M^h_1=\inf\{m\in[2^n]:\hat{\mu}_{h,m}\geq \hat{\mu}_{h,m'},\forall m'\neq m\}$, $M^h_2=\{m\in[2^n]\setminus M^h_1: \mu^*_{h,m}\geq x\}$ and $M^h_3=\{m\in [2^n]: \mu^*_{h,m}<x\}$. Let $p^h_x=\sum_{m\in M^h_3}\bar{p}^*_{h,m}$. We construct $\mathbf{p}$ as follows: for $m\in M^h_1$, $p_{h,m}=\Bar{p}^*_{h,m}+p^h_x$; for $m\in M^h_2$, $p_{h,m}=\bar{p}^*_{h,m}$; for $m\in M^h_3$, $p_{h,m}=0$. By \eqref{eq:hat-mu-optimal}, it is easy to get that $\sum_{h\in[2^n]}p^h_x\leq \rho^* x/\lambda$. Now, we verify the constructed $\mathbf{p}$ satisfying \eqref{ref:lp-mu*}. First, for each $h\in[2^n]$, it is easy to verify $\sum_{m\in[2^n]}p_{h,m}=\sum_{m\in[2^n]}\Bar{p}^*_{h,m}=1$. 
%$\sum_{m\in[2^n]}p_{h,m}=\sum_{m\in M^h_1} (\Bar{p}^*_{h,m}+p^h_x)+\sum_{m\in M^h_2}\bar{p}^*_{h,m}=\sum_{m\in M^h_1} \Bar{p}^*_{h,m}+\sum_{m\in M^h_3}\bar{p}^*_{h,m}+\sum_{m\in M^h_2}\bar{p}^*_{h,m}=\sum_{m\in[2^n]}\bar{p}^*_{h,m}=1$.
For each $m\in[2^n]$,
\begin{align}\label{eq:verify-pkm}
    \sum_{h\in[H]}\frac{p_{h,m}\lambda_h}{\mu^*_{h,m}/2^n}&=\sum_{h\in[2^n]: m\in M^h_1}\frac{p_{h,m}\lambda_h}{\mu^*_{h,m}/2^n}+\sum_{h\in[2^n]: m\in M^h_2}\frac{p_{h,m}\lambda_h}{\mu^*_{h,m}/2^n}\nonumber\\
    %&= \sum_{h\in[2^n]: m\in M^h_1}\frac{\Bar{p}^*_{h,m}\lambda}{\mu^*_{h,m}}+\sum_{h\in[2^n]: m\in M^h_2}\frac{\bar{p}^*_{h,m}\lambda}{\mu^*_{h,m}}+\sum_{h\in[2^n]: m\in M^h_1}\frac{p^h_x\lambda}{\mu^*_{h,m}}\nonumber\\
    &\overset{(a)}{\leq} \sum_{h\in[2^n]: m\in M^h_1}\frac{\Bar{p}^*_{h,m}\lambda_h}{\max(\hat{\mu}_{h,m}-\varepsilon,x)/2^n}+\sum_{h\in[2^n]: m\in M^h_2}\frac{\bar{p}^*_{h,m}\lambda_h}{\max(\hat{\mu}_{h,m}-\varepsilon,x)/2^n}+\sum_{h\in[2^n]: m\in M^h_1}\frac{p^h_x\lambda_h}{(\mu^0-\varepsilon)/2^n}\nonumber\\
    &\overset{(b)}{\leq } (1+\frac{\varepsilon}{x^2})\Big(\sum_{h\in[2^n]: m\in M^h_1}\frac{\Bar{p}^*_{h,m}\lambda_h}{(\hat{\mu}_{h,m}-\varepsilon)/2^n}+\sum_{h\in[2^n]: m\in M^h_2}\frac{\bar{p}^*_{h,m}\lambda_h}{(\hat{\mu}_{h,m}-\varepsilon)/2^n}\Big)+\frac{\rho^*x}{\mu^0-\varepsilon}\nonumber\\
    &\overset{(c)}{\leq } \rho^*+\frac{\varepsilon\rho^*}{x^2}+\frac{\rho^*x}{\mu^0-\varepsilon}.
\end{align}
The first two terms of (a) are by the fact that $\hat{\mu}_{h,m}-\mu^*_{h,m}\leq \varepsilon$ for all $(h,m)\in[2^n]\times [2^n]$ and $\mu^*_{h,m}\geq x$ for all $m\in M^h_1\cup M^h_2$ and $h\in[2^n]$. The last term of (a) comes from $\max_{y\in[0,1)}f(x,y)\geq \mu^0$ for all $x\in[0,1)$. (b) is due to the fact that the function $1/y$ is Lipschitz continuous on $[x,0)$ with the coefficient $1/x^2$. (c) is by \eqref{eq:hat-mu-optimal}. We can choose $(\varepsilon,x)$ such that $ \frac{\varepsilon\rho^*}{x^2}+\frac{\rho^*x}{\mu^0-\varepsilon}<1-\rho^*$, which implies \eqref{eq:verify-pkm}$<1$. Such $(\varepsilon,x)$ exists since $\varepsilon>0$ can be an arbitrary positive number. Hence, $\rho^n$ must be strictly less than 1, which implies that Algorithm~\ref{algo: to find subcritical} must return the tuple $(\mathbf{w},\mathbf{v},\mathbf{p})$ at most $n$ steps.

Next, we prove (ii). By definition, we have $\Bar{\rho}^n\leq \rho^n$ for all $n\in \N$. By Lemma~\ref{lem:monotonicity-rho}, we have that for each $n\in \N$,
\begin{equation}\label{eq:rho-bar-rho}
    \Bar{\rho}^1\leq \Bar{\rho}^2\leq \cdots \leq \Bar{\rho}^n\leq  \rho^n\leq \cdots\leq  \rho^2\leq  \rho^1.
\end{equation}
By Lemma~\ref{lem:find-subcritical}, if there exists a tuple $(\mathbf{w},\mathbf{v},\mathbf{p})$ with $\rho(\mathbf{w},\mathbf{v},\mathbf{p})<\rho^*$, then there exists $n'\in\N$ with $\rho^{n'}<\rho^*$. By \eqref{eq:rho-bar-rho}, for all $n\in\N$, $\bar{\rho}^n\leq \rho^{n'}<\rho^*$, which is contradicted by the fact that some $n\in\N$, $\bar{\rho}^n\ge\rho^*$. 
\end{proof}

\section{\textsc{MinDrift} policy~\cite{AS05}}\label{sec:MinDrift}
Recall the weighted queue length vector $\mathbf{Q}^N$ defined by~\eqref{defn:weighted-queue-length}.
In~\cite{AS05}, the author proposed a policy called \textsc{MinDrift} to asymptotically minimize the cost function $C(\mathbf{Q}^N)$. Here, we give a special case below when $C(\mathbf{Q}^N)\coloneqq\frac{1}{2}\sum_{j\in\mathcal{V}}(Q^N_j)^2$.

\begin{defn}[\textsc{MinDrift}]\label{def:MinDrift}
Define the selected set $\sel_i{(t)}$ of servers for a type $i$ task at time $t$ as $$\sel_i{(t)}\coloneqq\argmin_{j\in \mathcal{V}:\mu_{i,j}>0} \frac{Q_j(t)}{u_{i,j}}.$$
When a type-$i$ task comes at time $t$, the corresponding dispatcher will select a server from $\sel_i(t)$ uniformly at random and assign the task to the selected one immediately. 
\end{defn}
Note that $Q_j(t)/\mu_{i,j}$ approximates the increment of the cost for an arriving task, especially when $Q_j(t)$ is large. 
Thus, sending the type $i$ task to $\sel_i(t)$ asymptotically minimize the drift. 

\section{More numerical results}\label{app:more-numerical}
\textbf{Scenario 1}: At $t=0$, all servers have 1 task in the queue.
\begin{figure}[!htb]


     \centering
     \begin{subfigure}[b]{0.3\textwidth}
         \centering
         \includegraphics[width=\textwidth]{System1.png}
     \end{subfigure}
     \hfill
     \begin{subfigure}[b]{0.3\textwidth}
         \centering
         \includegraphics[width=\textwidth]{System2.png}
     \end{subfigure}
     \hfill
     \begin{subfigure}[b]{0.3\textwidth}
         \centering
         \includegraphics[width=\textwidth]{System3.png}
     \end{subfigure}
     \hfill
     \begin{subfigure}[b]{0.3\textwidth}
         \centering
         \includegraphics[width=\textwidth]{System4.png}
     \end{subfigure}
     \begin{subfigure}[b]{0.3\textwidth}
         \centering
         \includegraphics[width=\textwidth]{System5.png}
     \end{subfigure}
     \caption{Scenario 1}
    \label{fig:sample path-busy}
\end{figure}

%\begin{figure}[!htb]
%    \centering
%    \subfigure{\includegraphics[width=0.3\textwidth]{System1.png}} 
%    \subfigure{\includegraphics[width=0.3\textwidth]{System2.png}} 
%    \subfigure{\includegraphics[width=0.3\textwidth]{System3.png}}
%    \subfigure{\includegraphics[width=0.3\textwidth]{System4.png}}   %    \subfigure{\includegraphics[width=0.3\textwidth]{System5.png}}
%    \caption{Scenario 1}
%    \label{fig:sample path-busy}
%\end{figure}

\textbf{Scenario 2}: At $t=0$, half of the servers is idle and the rest has 1 in the queue.

\begin{figure}[!htb]
     \centering
     \begin{subfigure}[b]{0.3\textwidth}
         \centering
         \includegraphics[width=\textwidth]{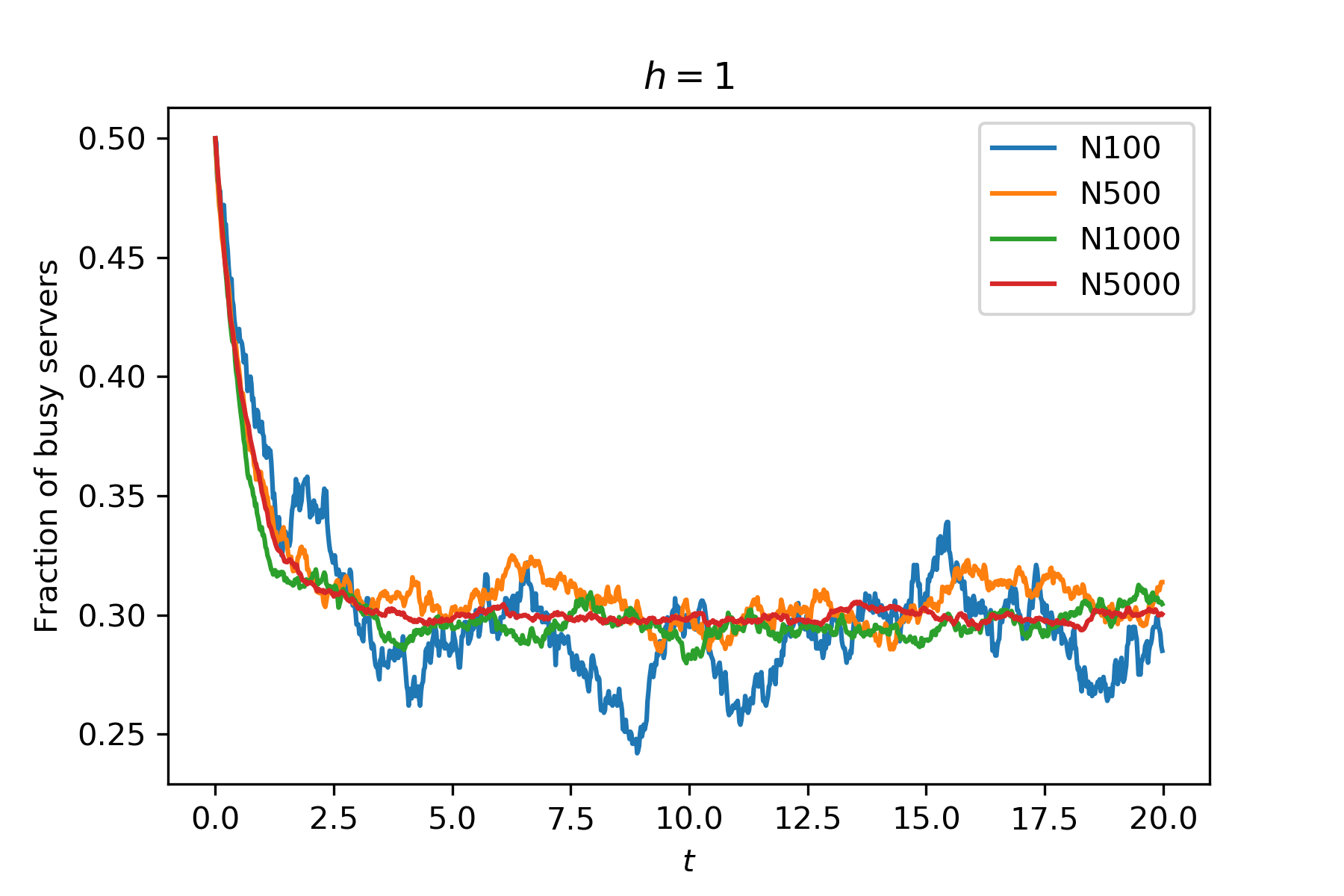}
     \end{subfigure}
     \hfill
     \begin{subfigure}[b]{0.3\textwidth}
         \centering
         \includegraphics[width=\textwidth]{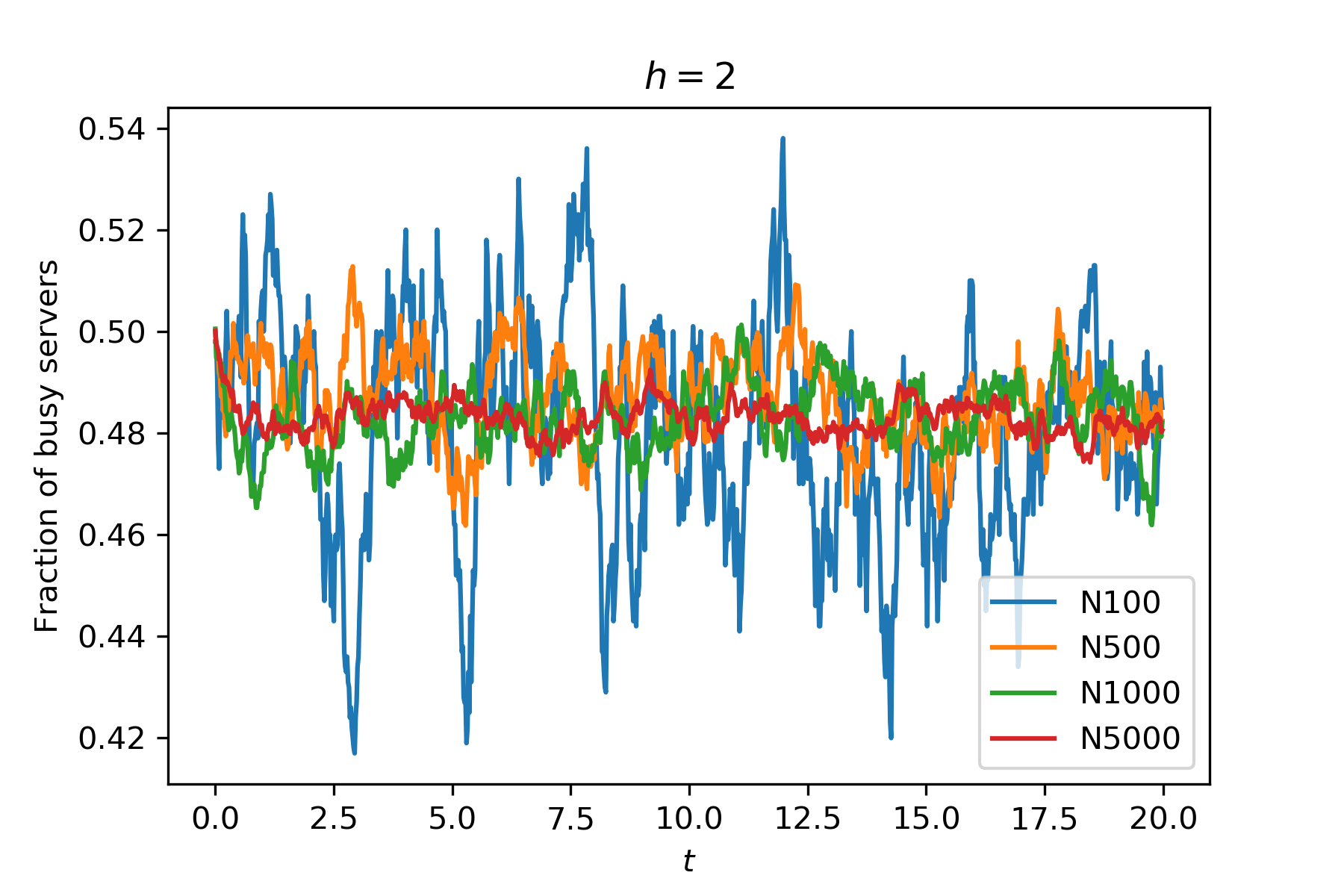}
     \end{subfigure}
     \hfill
     \begin{subfigure}[b]{0.3\textwidth}
         \centering
         \includegraphics[width=\textwidth]{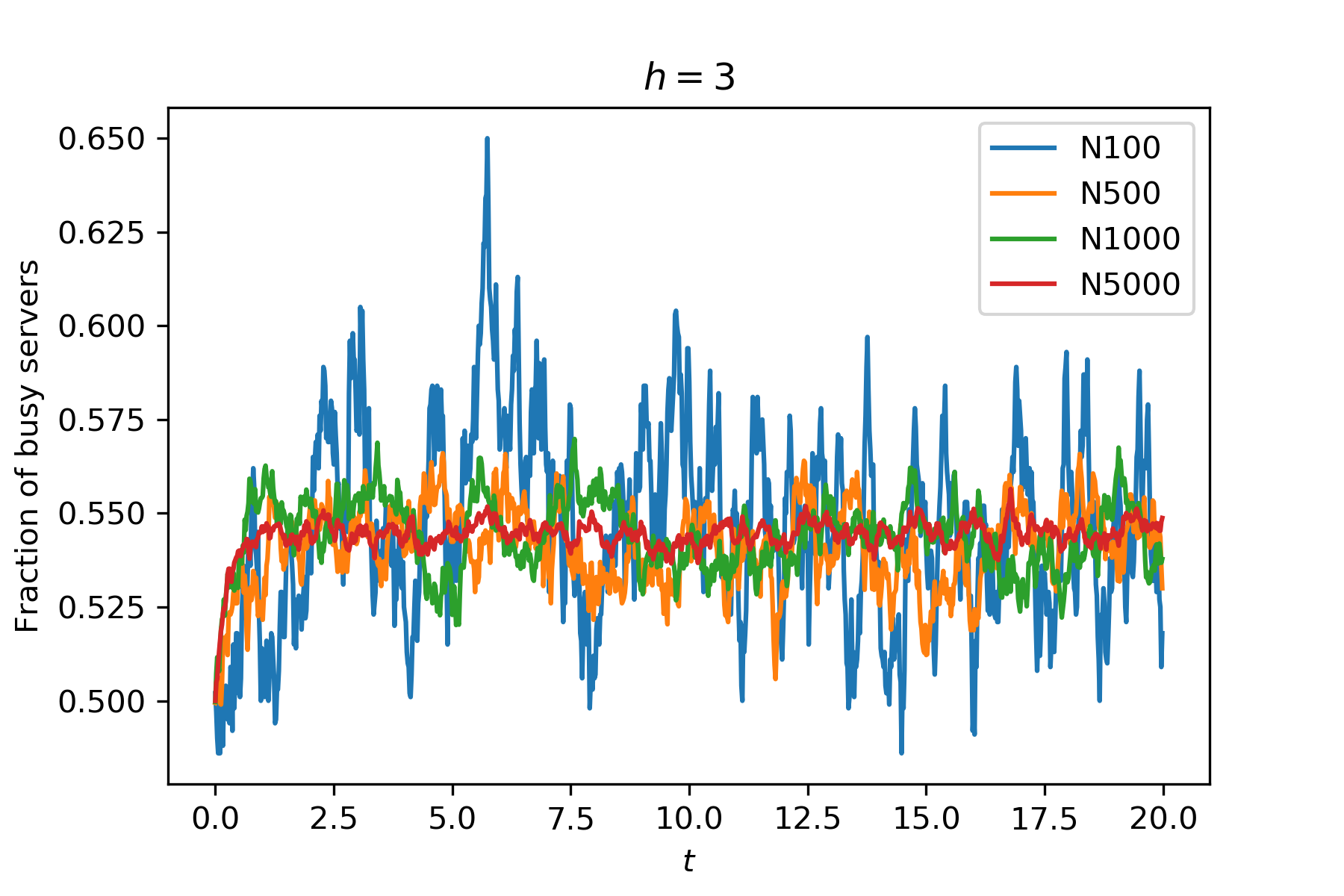}
     \end{subfigure}
     \hfill
     \begin{subfigure}[b]{0.3\textwidth}
         \centering
         \includegraphics[width=\textwidth]{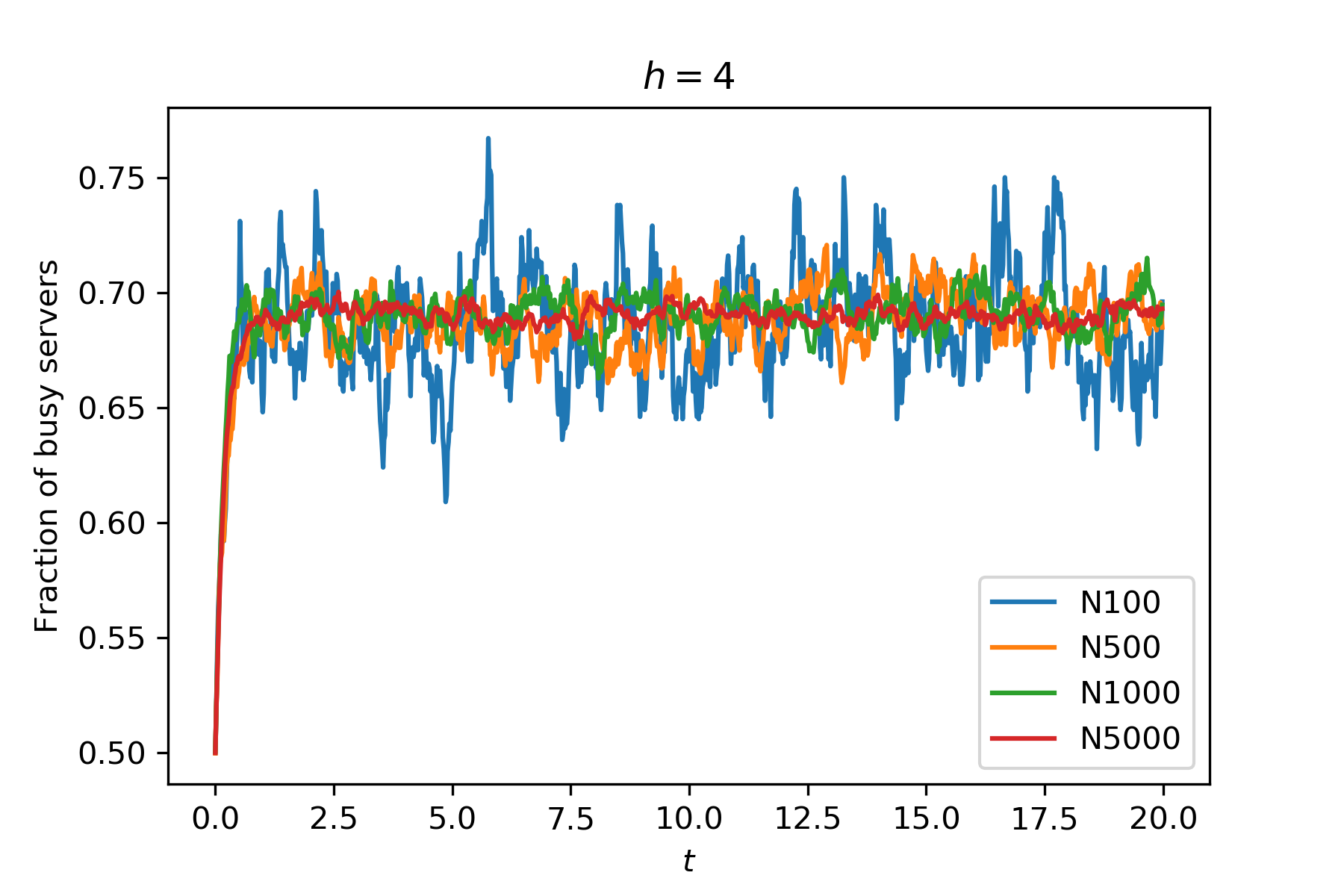}
     \end{subfigure}
     \begin{subfigure}[b]{0.3\textwidth}
         \centering
         \includegraphics[width=\textwidth]{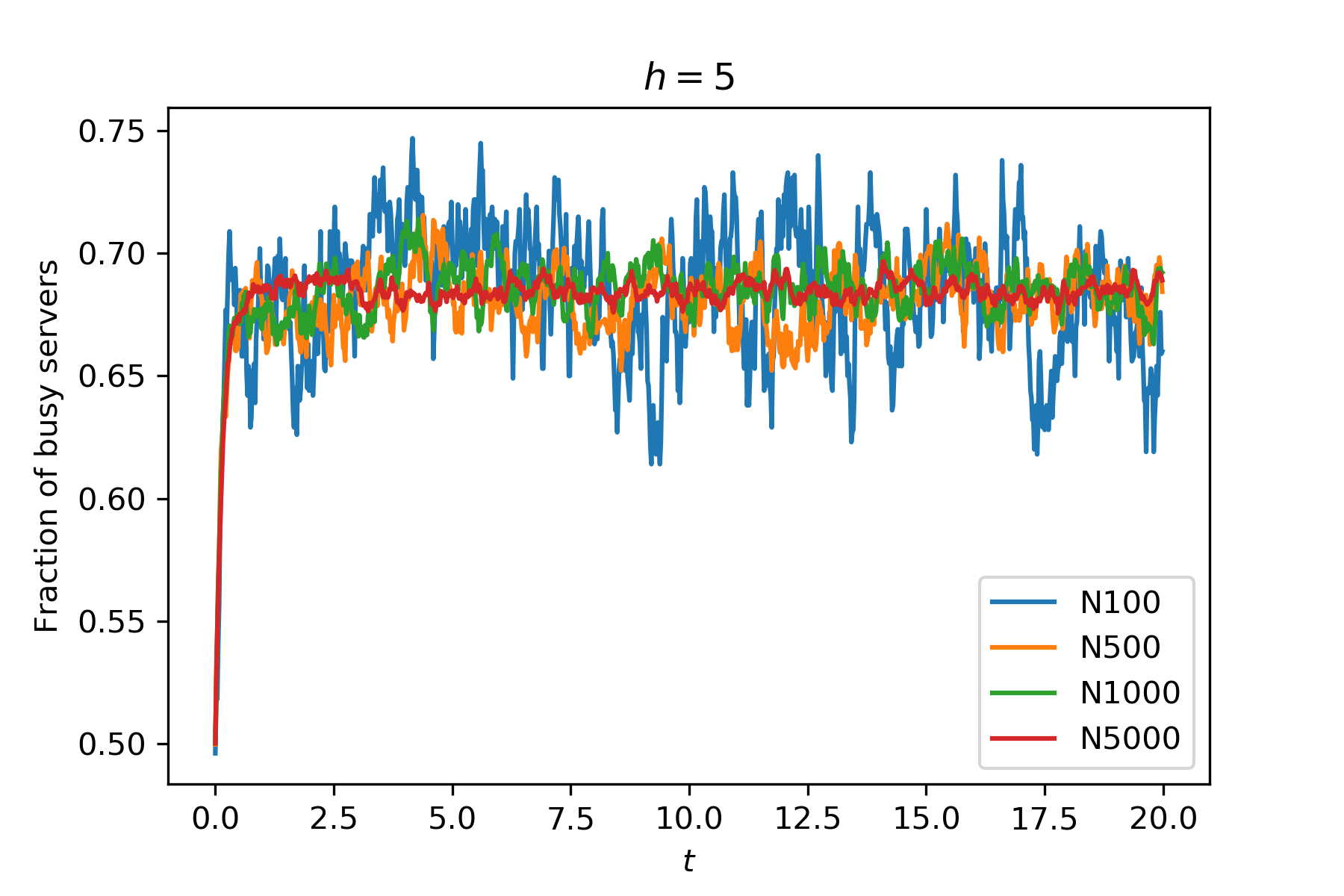}
     \end{subfigure}
     \caption{Scenario 2}
    \label{fig:sample path-half}
\end{figure}

% References here (outcomment the appropriate case) 

% CASE 1: BiBTeX used to constantly update the references 
%   (while the paper is being written).
\bibliographystyle{unsrtnat} % outcomment this and next line in Case 1
%\bibliography{<your bib file(s)>} % if more than one, comma separated

% CASE 2: BiBTeX used to generate mypaper.bbl (to be further fine tuned)
%\input{mypaper.bbl} % outcomment this line in Case 2

%\bibliographystyle{plain}
\bibliography{references,references-debankur,reference-manual}

\begin{thebibliography}{40}
\providecommand{\natexlab}[1]{#1}
\providecommand{\url}[1]{\texttt{#1}}
\expandafter\ifx\csname urlstyle\endcsname\relax
  \providecommand{\doi}[1]{doi: #1}\else
  \providecommand{\doi}{doi: \begingroup \urlstyle{rm}\Url}\fi

\bibitem[Mitzenmacher(1996)]{Mitzenmacher96}
Michael Mitzenmacher.
\newblock \emph{{The power of two choices in randomized load balancing}}.
\newblock PhD thesis, University of California, Berkeley, 1996.

\bibitem[Vvedenskaya et~al.(1996)Vvedenskaya, Dobrushin, and
  Karpelevich]{VDK96}
Nikita~Dmitrievna Vvedenskaya, Roland~L'vovich Dobrushin, and
  Fridrikh~Izrailevich Karpelevich.
\newblock {Queueing system with selection of the shortest of two queues: An
  asymptotic approach}.
\newblock \emph{Problemy Peredachi Informatsii}, 32\penalty0 (1):\penalty0
  20--34, 1996.

\bibitem[Bramson(2011)]{Bramson11}
Maury Bramson.
\newblock {Stability of join the shortest queue networks}.
\newblock \emph{Ann. Appl. Probab.}, 21\penalty0 (4):\penalty0 1568--1625,
  2011.
\newblock \doi{10.1214/10-AAP726}.
\newblock URL \url{https://doi.org/10.1214/10-AAP726}.

\bibitem[Lu et~al.(2011)Lu, Xie, Kliot, Geller, Larus, and Greenberg]{LXKGLG11}
Yi~Lu, Qiaomin Xie, Gabriel Kliot, Alan Geller, James~R. Larus, and Albert
  Greenberg.
\newblock {Join-idle-queue: a novel load balancing algorithm for dynamically
  scalable web services}.
\newblock \emph{Perform. Eval.}, 68\penalty0 (11):\penalty0 1056--1071, 2011.
\newblock ISSN 01665316.
\newblock \doi{10.1016/j.peva.2011.07.015}.

\bibitem[Eschenfeldt and Gamarnik(2018)]{EG18}
Patrick Eschenfeldt and David Gamarnik.
\newblock {Join the Shortest Queue with Many Servers . The Heavy-Traffic
  Asymptotics}.
\newblock \emph{Math. Oper. Res.}, \penalty0 (October), 2018.

\bibitem[Gupta and Walton(2019)]{GW19}
Varun Gupta and Neil Walton.
\newblock {Load balancing in the nondegenerate slowdown regime}.
\newblock \emph{Oper. Res.}, 67\penalty0 (1):\penalty0 281--294, 2019.
\newblock \doi{10.1287/opre.2018.1768}.

\bibitem[Weng et~al.(2020)Weng, Zhou, and Srikant]{WZS20}
Wentao Weng, Xingyu Zhou, and R.~Srikant.
\newblock {Optimal load balancing with locality constraints}.
\newblock \emph{Proc. ACM Meas. Anal. Comput. Syst.}, 4\penalty0 (3):\penalty0
  1--37, 2020.
\newblock \doi{10.1145/3428330}.
\newblock URL \url{https://doi.org/10.1145/3428330}.

\bibitem[Rutten and Mukherjee(2022)]{RM22}
Daan Rutten and Debankur Mukherjee.
\newblock {Load balancing under strict compatibility constraints}.
\newblock \emph{Math. Oper. Res.}, 2022.
\newblock \doi{10.1287/moor.2022.1258}.

\bibitem[van~der Boor et~al.(2022)van~der Boor, Borst, van Leeuwaarden, and
  Mukherjee]{BBLM21}
M.~van~der Boor, S.C. Borst, J.S.H. van Leeuwaarden, and D.~Mukherjee.
\newblock {Scalable load balancing in networked systems: A survey of recent
  advances}.
\newblock \emph{SIAM Rev.}, 64\penalty0 (3):\penalty0 554--622, 2022.
\newblock \doi{10.1137/20M1323746}.
\newblock URL \url{https://doi.org/10.1137/20M1323746}.

\bibitem[Lov{\'{a}}sz(2012)]{ll12}
László Lov{\'{a}}sz.
\newblock \emph{{Large Networks and Graph Limits}}.
\newblock Colloquium Publications, 2012.

\bibitem[Stolyar(2005)]{AS05}
Alexander~L. Stolyar.
\newblock {Optimal Routing in Output-Queued Flexible Server Systems}.
\newblock \emph{Probab. Eng. Inf. Sci.}, 19:\penalty0 141--189, 2005.

\bibitem[Mandelbaum and Stolyar(2004)]{MS04}
Avi Mandelbaum and Alexander~L. Stolyar.
\newblock {Scheduling flexible servers with convex delay costs: Heavy-traffic
  optimality of the generalized c{$\mu$}-rule}.
\newblock \emph{Oper. Res.}, 52\penalty0 (6):\penalty0 836--855, 2004.

\bibitem[Zhao et~al.(2021)Zhao, Banerjee, and Mukherjee]{zhao21}
Zhisheng Zhao, Sayan Banerjee, and Debankur Mukherjee.
\newblock {Many-server asymptotics for Join-the-Shortest Queue in the
  Super-Halfin-Whitt Scaling Window}.
\newblock 5 2021.
\newblock URL \url{http://arxiv.org/abs/2106.00121}.

\bibitem[Banerjee and Mukherjee(2019)]{BM19a}
Sayan Banerjee and Debankur Mukherjee.
\newblock {Join-the-shortest queue diffusion limit in Halfin-Whitt regime: Tail
  asymptotics and scaling of extrema}.
\newblock \emph{Ann. Appl. Probab.}, 29\penalty0 (2):\penalty0 1262--1309,
  2019.
\newblock \doi{10.1214/18-AAP1436}.
\newblock URL \url{http://arxiv.org/abs/1803.03306}.

\bibitem[Banerjee and Mukherjee(2020)]{BM19b}
Sayan Banerjee and Debankur Mukherjee.
\newblock {Join-the-shortest queue diffusion limit in Halfin-Whitt regime:
  Sensitivity on the heavy traffic parameter}.
\newblock \emph{Ann. Appl. Probab.}, 30\penalty0 (1):\penalty0 80--144, 2020.
\newblock \doi{10.1214/19-AAP1496}.
\newblock URL \url{http://dx.doi.org/10.1214/19-AAP1496}.

\bibitem[Braverman(2020)]{Braverman18}
Anton Braverman.
\newblock {Steady-state analysis of the join-the-shortest-queue model in the
  Halfin-Whitt regime}.
\newblock \emph{Math. Oper. Res.}, 45\penalty0 (3):\penalty0 1069--1103, 2020.
\newblock \doi{10.1287/moor.2019.1023}.
\newblock URL \url{https://arxiv.org/pdf/1801.05121.pdf}.

\bibitem[Hurtado-Lange and Maguluri()]{HM20}
Daniela Hurtado-Lange and Siva~Theja Maguluri.
\newblock {Load balancing system under Join the Shortest Queue:
  Many-server-heavy-traffic asymptotics}.
\newblock \emph{QUESTA}.
\newblock URL \url{http://arxiv.org/abs/2004.04826}.

\bibitem[Hurtado-Lange and Maguluri(2022)]{HLM22}
Daniela Hurtado-Lange and Siva~Theja Maguluri.
\newblock {A load balancing system in the many-server heavy-traffic
  asymptotics}.
\newblock \emph{Queueing Syst.}, 101\penalty0 (3-4):\penalty0 353--391, 2022.
\newblock ISSN 15729443.
\newblock \doi{10.1007/s11134-022-09847-7}.
\newblock URL \url{https://doi.org/10.1007/s11134-022-09847-7}.

\bibitem[Liu and Ying(2019)]{LY19}
Xin Liu and Lei Ying.
\newblock {A simple steady-state analysis of load balancing algorithms in the
  sub-Halfin-Whitt regime}.
\newblock \emph{ACM SIGMETRICS Perform. Eval. Rev.}, 46\penalty0 (2):\penalty0
  15–17, 2019.
\newblock ISSN 0163-5999.
\newblock \doi{10.1145/3305218.3305225}.
\newblock URL \url{https://doi.org/10.1145/3305218.3305225}.

\bibitem[Liu and Ying(2022)]{LY21}
Xin Liu and Lei Ying.
\newblock {Universal scaling of distributed queues under load balancing in the
  super-Halfin-Whitt regime}.
\newblock \emph{IEEE/ACM Trans. Netw.}, 30\penalty0 (1):\penalty0 190--201,
  2022.
\newblock \doi{10.1109/TNET.2021.3105480}.

\bibitem[Braverman(2022)]{Braverman22}
Anton Braverman.
\newblock {Join the shortest queue in the Halfin-Whitt regime: Rates of
  convergence to the diffusion limit}.
\newblock 2022.

\bibitem[Rutten and Mukherjee(2023)]{RM23}
Daan Rutten and Debankur Mukherjee.
\newblock {Mean-field Analysis for Load Balancing on Spatial Graphs}.
\newblock 1 2023.
\newblock URL \url{http://arxiv.org/abs/2301.03493}.

\bibitem[Stolyar(2015{\natexlab{a}})]{AS15}
Alexander~L. Stolyar.
\newblock {Pull-based load distribution in large-scale heterogeneous service
  systems}.
\newblock \emph{Queueing Syst.}, 80\penalty0 (4):\penalty0 341--361,
  2015{\natexlab{a}}.
\newblock ISSN 15729443.
\newblock \doi{10.1007/s11134-015-9448-8}.

\bibitem[Mukherjee et~al.(2016)Mukherjee, Borst, Van~Leeuwaarden, and
  Whiting]{MBLW16-1}
Debankur Mukherjee, Sem~C. Borst, Johan S.~H. Van~Leeuwaarden, and Philip~A.
  Whiting.
\newblock {Universality of load balancing schemes on the diffusion scale}.
\newblock \emph{J. Appl. Probab.}, 53\penalty0 (4), 2016.
\newblock \doi{10.1017/jpr.2016.68}.
\newblock URL \url{https://doi.org/10.1017/jpr.2016.68}.

\bibitem[Stolyar(2017)]{Stolyar17}
Alexander~L. Stolyar.
\newblock {Pull-based load distribution among heterogeneous parallel servers:
  the case of multiple routers}.
\newblock \emph{Queueing Syst.}, 85\penalty0 (1):\penalty0 31--65, 2017.
\newblock ISSN 1572-9443.
\newblock \doi{10.1007/s11134-016-9508-8}.
\newblock URL \url{http://dx.doi.org/10.1007/s11134-016-9508-8}.

\bibitem[Bhambay and Mukhopadhyay(2022)]{BM22}
Sanidhay Bhambay and Arpan Mukhopadhyay.
\newblock {Asymptotic optimality of speed-aware JSQ for heterogeneous service
  systems}.
\newblock \emph{Perform. Eval.}, 157-158:\penalty0 102320, 2022.
\newblock ISSN 0166-5316.
\newblock \doi{https://doi.org/10.1016/j.peva.2022.102320}.
\newblock URL
  \url{https://www.sciencedirect.com/science/article/pii/S0166531622000281}.

\bibitem[Hurtado-Lange and Maguluri(2021)]{HLM21}
Daniela Hurtado-Lange and Siva~Theja Maguluri.
\newblock {Throughput and delay optimality of power-of-d choices in
  inhomogeneous load balancing systems}.
\newblock \emph{Oper. Res. Lett.}, 49\penalty0 (4):\penalty0 616--622, 2021.
\newblock ISSN 0167-6377.
\newblock \doi{https://doi.org/10.1016/j.orl.2021.06.010}.
\newblock URL
  \url{https://www.sciencedirect.com/science/article/pii/S0167637721000924}.

\bibitem[Mukhopadhyay and Mazumdar(2016)]{MM16}
A~Mukhopadhyay and R~R Mazumdar.
\newblock {Analysis of randomized Join-the-Shortest-Queue (JSQ) schemes in
  large heterogeneous processor-sharing systems}.
\newblock \emph{IEEE Trans. Control Netw. Syst.}, 3\penalty0 (2):\penalty0
  116--126, 2016.
\newblock ISSN 2325-5870.
\newblock \doi{10.1109/TCNS.2015.2428331}.

\bibitem[Zhao et~al.(2022)Zhao, Mukherjee, and Wu]{ZhaoMW22}
Zhisheng Zhao, Debankur Mukherjee, and Ruoyu Wu.
\newblock {Exploiting Data Locality to Improve Performance of Heterogeneous
  Server Clusters}.
\newblock 2022.
\newblock URL \url{http://arxiv.org/abs/2211.16416}.

\bibitem[Allmeier and Gast(2022)]{AG22}
Sebastian Allmeier and Nicolas Gast.
\newblock {Mean field and refined mean field approximations for heterogeneous
  systems: It works!}
\newblock \emph{Proc. ACM Meas. Anal. Comput. Syst.}, 6\penalty0 (1), 2 2022.
\newblock \doi{10.1145/3508033}.
\newblock URL \url{https://doi.org/10.1145/3508033}.

\bibitem[Gardner and Righter(2020)]{GR20}
Kristen Gardner and Rhonda Righter.
\newblock {Product forms for FCFS queueing models with arbitrary server-job
  compatibilities: an overview}.
\newblock \emph{Queueing Systems}, 96\penalty0 (1):\penalty0 3--51, 2020.
\newblock ISSN 1572-9443.
\newblock \doi{10.1007/s11134-020-09668-6}.
\newblock URL \url{https://doi.org/10.1007/s11134-020-09668-6}.

\bibitem[Harrison(1998)]{Harrison98}
B~Y J~Michael Harrison.
\newblock {Heavy Traffic Analysis of a System with Parallel Servers :
  Asymptotic Optimality of Discrete-Review Policies}.
\newblock \emph{Ann. Appl. Probab.}, 8\penalty0 (3):\penalty0 822--848, 1998.

\bibitem[Harrison and L{\'{o}}pez(1999)]{HL99}
J.~Michael Harrison and Marcel~J. L{\'{o}}pez.
\newblock {Heavy traffic resource pooling in parallel-server systems}.
\newblock \emph{Queueing Syst.}, 33\penalty0 (4):\penalty0 339--368, 1999.
\newblock ISSN 15729443.
\newblock \doi{10.1023/A:1019188531950}.

\bibitem[Foss and Chernova(1998)]{FC98}
Serguei~G. Foss and Natalia~I. Chernova.
\newblock {On the stability of a partially accessible multi‐station queue
  with state‐dependent routing}.
\newblock \emph{Queueing Syst.}, 29\penalty0 (1):\penalty0 55--73, 1998.
\newblock ISSN 02570130.
\newblock \doi{10.1023/A:1019175812444}.
\newblock URL \url{http://link.springer.com/10.1023/A:1019175812444}.

\bibitem[Dai and Tezcan(2011)]{DT11}
J.~G. Dai and Tolga Tezcan.
\newblock {State space collapse in many-server diffusion limits of parallel
  server systems}.
\newblock \emph{Math. Oper. Res.}, 36\penalty0 (2):\penalty0 271--320, 2011.
\newblock ISSN 0364765X.
\newblock \doi{10.1287/moor.1110.0494}.

\bibitem[Mukherjee et~al.(2018)Mukherjee, Borst, van Leeuwaarden, and
  Whiting]{MBLW16-3}
Debankur Mukherjee, Sem~C Borst, Johan S~H van Leeuwaarden, and Philip~A
  Whiting.
\newblock {Universality of power-of-d load balancing in many-server systems}.
\newblock \emph{Stoch. Syst.}, 8\penalty0 (4):\penalty0 265--292, 2018.
\newblock \doi{10.1287/stsy.2018.0016}.
\newblock URL \url{https://doi.org/10.1287/stsy.2018.0016}.

\bibitem[Stolyar(2015{\natexlab{b}})]{Stolyar15}
Alexander~L. Stolyar.
\newblock {Pull-based load distribution in large-scale heterogeneous service
  systems}.
\newblock \emph{Queueing Syst.}, 80\penalty0 (4):\penalty0 341--361,
  2015{\natexlab{b}}.
\newblock ISSN 0257-0130.
\newblock \doi{10.1007/s11134-015-9448-8}.
\newblock URL \url{http://link.springer.com/10.1007/s11134-015-9448-8}.

\bibitem[Tsitsiklis and Xu(2013)]{TX13}
John~N. Tsitsiklis and Kuang Xu.
\newblock {Queueing system topologies with limited flexibility}.
\newblock In \emph{Proc. SIGMETRICS '13}, pages 167--178, 2013.
\newblock \doi{10.1145/2465529.2465757}.

\bibitem[Tsitsiklis and Xu(2017)]{TX17}
John~N. Tsitsiklis and Kuang Xu.
\newblock {Flexible queueing architectures}.
\newblock \emph{Oper. Res.}, 65\penalty0 (5):\penalty0 1398--1413, 2017.
\newblock \doi{10.1287/opre.2017.1620}.

\bibitem[Ethier and Kurtz(2009)]{EK09}
Stewart~N. Ethier and Thomas~G. Kurtz.
\newblock \emph{{Markov Processes: Characterization and Convergence}}.
\newblock 2009.
\newblock ISBN 047176986X.
\newblock \doi{10.1007/978-3-0348-8645-1{\_}6}.

\end{thebibliography}

\end{document}